\newcommand{\aaa}{\mathbf{a}}
\newcommand{\bbb}{\mathbf{b}}
\newcommand{\ccc}{\mathbf{c}}
\newcommand{\GR}{\mathrm{GR}}
\newcommand{\SR}{\mathrm{SR}}
\newcommand{\ml}{\mathrm{ml}}
\newcommand{\BC}{\mathbb{C}}
\newcommand{\ot}{\otimes}
\newtheorem{thm}{Theorem}
\newtheorem{lem}[thm]{Lemma}
\newtheorem{prop}[thm]{Proposition}
\newtheorem{cor}[thm]{Corollary}
\theoremstyle{definition}
\newtheorem{defn}[thm]{Definition}
\newtheorem{rem}[thm]{Remark}
\newtheorem{ex}[thm]{Example}
\title{Geometric Rank and Linear Determinantal Varieties}
\author{Runshi Geng
\footnote{Texas A\&M University, ORCID: 0000-0003-3440-5148, runshi.geng@gmail.com}}
\begin{document}
\maketitle
\begin{abstract}
There are close relations between tripartite tensors with bounded geometric ranks and linear determinantal varieties with bounded codimensions. We study linear determinantal varieties with bounded codimensions, and prove upper bounds of the dimensions of the ambient spaces. Using those results, we classify tensors with geometric rank 3, find upper bounds of multilinear ranks of primitive tensors with geometric rank 4, and prove the existence of such upper bounds in general. We extend results of tripartite tensors to $n$-part tensors, showing the equivalence between geometric rank 1 and partition rank 1.
\end{abstract}

\vspace{-0.3em}\hspace*{2.5em}\textbf{Key words:} geometric rank, multilinear rank, linear determinantal variety, tensor.

\vspace{-0.3em}\hspace*{2.5em}2020 Mathematics Subject Classification: 15A69, 68Q17, 14L30.

\section{Introduction}
\subsection{Geometric Rank}
Various types of ranks of tensors have been introduced and studied in numerous areas such as algebraic complexity, extremal combinatorics and quantum information theory. Subrank was introduced by Strassen to study the algebraic complexity of matrix multiplication \cite{str}, and its asymptotic version plays an important role in Strassen's laser method \cite{str87}, which people have utilized to obtain upper bounds of the exponent of matrix multiplication. Slice rank arose in the study of the cap set problem \cite{slice}, and it turned out to be helpful in the study of the sunflower problem \cite{naslund2017upper}. Slice rank and subrank were also studied from the point of view of quantum information theory \cite{universal}. Analytic rank was introduced by \cite{analytic} in the context of Fourier analysis, and \cite{analytic2} showed it lower bounds slice rank and can replace slice rank in the resolution of cap set problem.

Geometric rank was introduced in \cite{KMZ20} as an extension of analytic rank from finite fields to algebraically closed fields, and as a tool to find upper bounds on border subrank and lower bounds on slice rank. \cite{GL20} took a step further studying geometric rank systematically, giving results on tensors with geometric rank at most 3. \cite{CM21} showed that the partition rank is at most $2^{n-1}$ times of the geometric rank for $n$-part tensors. Putting different types of ranks in an increasing order, we have:
\begin{align*}
    \mathrm{Subrank}\leq\mathrm{Border\;Subrank}&\leq\mathrm{Geometric\;Rank}\\
    &\leq\mathrm{Partition\;Rank}\leq\mathrm{Slice\;Rank}\leq\mathrm{Multilinear\;Ranks}\leq\mathrm{Rank}.
\end{align*}
Any tensor $T\in A^{(1)}\otimes\cdots\otimes A^{(n)}:=\BC^{m_1}\ot\cdots\ot\BC^{m_n}$ can be regarded as a multilinear function $T:A^{(1)*}\times \cdots \times A^{(n)*}\rightarrow \BC$. Its \textbf{geometric rank} is defined to be:
$$\GR(T):=\mathrm{codim}\{(x_1,\dots,x_{n-1})\in A^{(1)*}\times \cdots \times A^{(n-1)*}\mid T(x_1,\dots,x_{n-1},x_n)=0,\forall x_n\in A^{(n)*}\}.$$

A tripartite tensor $T\in A\ot B\ot C:=\BC^{\aaa}\ot\BC^{\bbb}\ot\BC^{\ccc}$ can be regarded as a linear map $T_A:A^*\rightarrow B\ot C$. Omitting the subscripts when there is no ambiguity, $T(A^*)\subset B\ot C$ is an $\aaa$-dimensional space of $\bbb\times\ccc$ matrices. Fixing bases $\{a_i\}_{i=1}^{\aaa},\{b_j\}_{j=1}^{\bbb}$ and $\{c_k\}_{k=1}^{\ccc}$ of $A,B$ and $C$, and the dual basis $\{\alpha_i\}_{i=1}^{\aaa}$ of $A^*$ corresponding to $\{a_i\}_{i=1}^{\aaa}$, we often represent $T(A^*)$ by a general point $T(\sum x_i\alpha_i)$ of $T(A^*)$ in a matrix form. That is, $T(A^*)$ will be written as a $\bbb\times\ccc$ matrix whose entries are linear forms in variables $x_i$'s.

Let $A^*_i:=\{\alpha\in A^*\mid \mathrm{rank}(T(\alpha))\leq i\}$, and $B^*_i$ and $C^*_i$ are defined similarly. An alternative definition, proved to be equivalent to the
previous in \cite[Theorem~3.1]{KMZ20}, is:
\begin{equation}\label{altdef}
    \GR(T)=\min\{\mathrm{codim}(A^*_i)+i\}=\min\{\mathrm{codim}(B^*_i)+i\}=\min\{\mathrm{codim}(C^*_i)+i\}
\end{equation}
which shows close relations of geometric rank with spaces of matrices of bounded rank and more generally determinantal varieties.

Given $r\leq \min\{\aaa,\bbb,\ccc\}$, let $\mathcal{GR}_r$ be the set of tensors with geometric rank at most $r$. As $\mathcal{GR}_r$ is Zariski closed \cite{KMZ20}, our goal is to give geometric interpretations of those varieties, and classify the tensors in $\mathcal{GR}_r$ up to changes of bases and permutations of $A,B$ and $C$ if possible.

\subsection{Determinantal Variety}
For a linear space of matrices $E\subset A\ot B:=\BC^{\aaa}\ot \BC^{\bbb}$, let $E_r$ be the locus of matrices of rank at most $r$, for $r\leq \min\{\aaa,\bbb\}$. In other words, $\mathbb{P}E_r=\mathbb{P}E\cap\sigma_r(\mathrm{Seg}(\mathbb{P}A\times\mathbb{P}B))$, the intersection of $\mathbb{P}E$ with the $r$-th secant variety of the Segre variety. $E_r$ is cut out by all $(r+1)\times(r+1)$ minors set theoretically, and is called a \textbf{linear determinantal variety} (see, e.g., \cite[Ch. II]{arbarello1985geometry}).

Let $H:=A\ot B$, then $H_r$ is the affine cone of $\sigma_r(\mathrm{Seg}(\mathbb{P}A\times\mathbb{P}B))$ and is called a generic determinantal variety. The defining ideal $I(H_r)$ is prime and generated by all $(r+1)\times(r+1)$ minors \cite{weyl1946classical}, and $\mathrm{codim}(H_r)=(\aaa-r)(\bbb-r)$ \cite{hochster1971cohen}. Since $E_r=H_r\cap E$ is a linear section of $H_r$, $\mathrm{codim}_E (E_r)\leq (\aaa-r)(\bbb-r)$.

To study $\mathcal{GR}_r$, note that by definition $T(A^*_i)$ consists of matrices in $B\ot C$ of rank at most $i$, so it is a linear determinantal variety. Since $\mathrm{codim}_{T(A^*)}(T(A^*_i))=\mathrm{codim}_{A^*}(A^*_i)$, by (\ref{altdef}) we need to find all linear spaces $E\subset B\ot C$ satisfying $\mathrm{codim}_E (E_i)\leq r-i$ for $0\leq i\leq r$.

\subsection{Space of Matrices of Bounded Rank}
A linear space of matrices $E\subset A\ot B:=\BC^{\aaa}\ot \BC^{\bbb}$ is said to have \textbf{bounded rank r} if all matrices in $E$ have rank at most $r$, i.e., $E_r=E$. There are two important classes of spaces of bounded rank -- primitive spaces \cite{Atk81} and compression spaces \cite{EH88}. $E$ is \textbf{compression} if there exist $A'\subset A$ and $B'\subset B$ of dimension $p$ and $q$, such that $E\subset A'\ot B+A\ot B'$ and $p+q=r$. $E$ is \textbf{primitive} if for any subspaces $A'\subset A$ or $B'\subset B$ of codimension 1, $E\not\subset A'\ot B$ or $A\ot B'$, and neither $E\cap(A'\ot B)$ nor $E\cap(A\ot B')$ has bounded rank $r-1$.


Atkinson and Lloyd showed that every space of bounded rank $r$ that is not compression equals to a "sum" of compression space of bounded rank $i$ and a primitive space of bounded rank $r-i$ for some $i$ in \cite{Atk81}. Later all primitive spaces of bounded rank 2 and 3 were classified in \cite{Atk83}. \cite{EH88} recasted the study with sheaves and gave geometric interpretations of all primitive spaces of bounded rank 3 as matrices.

The alternative definition (\ref{altdef}) shows $\GR(T)\leq r$ if at least one of $T(A^*), T(B^*)$ and $T(C^*)$ has bounded rank $r$. In fact, when $r=1$ and $2$ this condition is necessary \cite{GL20}. But it fails to be necessary when $r=3$ as there are two exceptions (see Theorem \ref{gr3thm}).

\subsection{Matrix Multiplication Tensor}
In the study of arithmetic complexity of matrix multiplication, Strassen found that the number of additions and multiplications are required to multiply two matrices asymptotically is determined by the rank of matrix multiplication tensors \cite{str87}. 

For positive integers $e\leq h\leq l$, put $A=\BC^{e\times h},B=\BC^{h\times l}$ and $C=\BC^{l\times e}$. Then the \textbf{matrix multiplication tensor} $M_{\langle e,h,l \rangle}$ is defined by $M_{\langle e,h,l \rangle}(x,y,z)=\mathrm{Tr}(xyz)$ for $x\in A^*$, $y\in B^*$ and $z\in C^*$. We often write $M_{\langle n\rangle}:=M_{\langle n,n,n\rangle}$. With proper choices of bases, $M_{\langle e,h,l \rangle}$ may be written as the block form:
\begin{equation}\label{mm}
M_{\langle e,h,l \rangle}(A^*)=
\begin{pmatrix}
D & & &\\
&D&&\\
&&\ddots &\\
&&&D
\end{pmatrix}
\end{equation}
where $D$ is a $e\times h$ block consisting of linearly independent entries and there are $l$ copies of $D$ in $M_{\langle e,h,l \rangle}(A^*)$.

Strassen gave a lower bound of the border subrank of $M_{\langle e,h,l \rangle}$, which is $eh-\lfloor (e+h-l)^2/4\rfloor$ if $e+h\geq l$ and $eh$ otherwise \cite{str87}. And recently \cite{KMZ20} surprisingly found that the above lower bound equals to the geometric rank of $M_{\langle e,h,l \rangle}$, and consequently equals to the border subrank of $M_{\langle e,h,l \rangle}$ since geometric rank upper bounds border subrank.

\subsection{Main Results}
For $T\in A\otimes B\otimes C=\BC^{\aaa}\otimes\BC^{\bbb}\otimes\BC^{\ccc}$, the $\textbf{multilinear ranks}$ are $\ml_A(T):=\mathrm{rank}(T_A)$, $\ml_B(T):=\mathrm{rank}(T_B)$ and $\ml_C(T):=\mathrm{rank}(T_C)$. And the \textbf{slice rank} is $\SR(T):=\min\{\ml_A(T_1)+\ml_B(T_2)+\ml_C(T_3)\mid T=T_1+T_2+T_3\}$.

\begin{defn}\label{primitivedef}
$T$ is \textbf{compression of geometric rank r} if $\GR(T)=\SR(T)=r$. $T$ is \textbf{primitive of geometric rank r} if it cannot be written as $T=X+Y$ with $\GR(X)=r-1$ and $\GR(Y)=1$.
\end{defn}

For tripartite tensors $T\in A\ot B\ot C$, our main results are:
\begin{itemize}
    \item \textbf{Theorem \ref{gr3thm}.} 
    A tensor $T\in A\ot B\ot C$ has geometric rank at most 3 if and only if one of the following conditions holds:
    \begin{enumerate}
        \item $T(A^*)$, $T(B^*)$ or $T(C^*)$ is of bounded rank $3$, or
        \item $\mathrm{SR}(T)\leq 3$, or
        \item up to changes of bases $T=M_{\langle 2\rangle}$.
    \end{enumerate}
    
    If $T$ is primitive of geometric rank 3, then up to changes of bases and permutations of $A$, $B$ and $C$, it is either the matrix multiplication tensor $M_{\langle 2\rangle}$ or the tensor such that $T(A^*)$ is a space of $4\times 4$ skew-symmetric matrices of dimension $4,5$ or $6$.

    \item \textbf{Theorem \ref{gr4thm}.} If $T$ is primitive of geometric rank 4, then either at least 2 of $\ml_A(T)$, $\ml_B(T)$ and $\ml_C(T)$ are at most 6, or all of them are at most 8.
    
    \item \textbf{Theorem \ref{grrthm}.} For all $r$, there exists a positive integer $N_r$, such that if $T$ is primitive of geometric rank $r$, then at least 2 of $\ml_A(T)$, $\ml_B(T)$ and $\ml_C(T)$ are at most $N_r$.
\end{itemize}

For $n\geq 3$ and $n$-part tensors $T\in A^{(1)}\otimes\cdots\otimes A^{(n)}$, we have:
\begin{itemize}
    \item \textbf{Proposition \ref{prop:npart2}.} For $r< n$, $\mathrm{GR}(T)\leq r$ if and only if there exists $i$ such that $T(A^{(i)*})$ has bounded geometric rank $r$ as a space of $(n-1)$-part tensors.
    
    \item \textbf{Proposition \ref{prop:npart3}.} $T$ has geometric rank 1 if and only if it has partition rank 1.
\end{itemize}

Although we assume all tensors are defined over complex field, all results from this paper hold for any algebraically closed field with characteristic zero.

\subsection{Overview}
We begin with discussion of primitive and compression tensors in section \S \ref{section:primitive}. Lemma \ref{primitivelem} gives a criterion to determine if a tensor is primitive, and Corollary \ref{MMprimitive} shows the matrix multiplication tensors are either primitive or compression. Lemma \ref{decomplem} shows any tensor with degenerate geometric rank can be decomposed as a sum of a primitive tensor and a compression tensor.

In section \S \ref{section:determinantal} we study the subspaces $E\subset A\ot B$ whose determinantal varieties $E_k$ have bounded codimensions, especially finding the upper bounds of the dimensions of $A$ and $B$ when $E$ is concise. Proposition \ref{E2codim1prop} gives the classification of spaces whose $3\times 3$ minors all have a common quadratic factor. Proposition \ref{Ercodimnprop} proves the existence of the upper bounds on the dimensions of $A$ and $B$ in general. In our study of determinantal varieties, we observed an error in Proposition 1 of \cite{beauville2018introduction}, see Remark \ref{error} for details.

Using the results on linear determinantal varieties, in section \S \ref{sec:gr3} and \ref{sec:gr4} we conclude the classification of tensors in $\mathcal{GR}_3$ in Theorem \ref{gr3thm}, find upper bounds of multilinear ranks of primitive tensors with geometric rank 4 in Theorem \ref{gr4thm}, and obtain the existence of such upper bounds for tensors with bounded geometric rank in general.

In sections \S \ref{section:npart} we shift our study from tripartite tensors to $n$-part tensors. Proposition \ref{altdef2} generalizes the alternative definition (\ref{altdef}). Proposition \ref{prop:npart2} shows that $N$-part tensors with small geometric ranks always correspond to spaces of $(N-1)$-part tensors of bounded geometric ranks. Finally we conclude the equivalence between partition rank 1 and geometric rank 1 in Proposition \ref{prop:npart3}.

\section*{Acknowledgements}
I appreciate my advisor Joseph Landsberg for massive instructions on my research on geometric ranks, and lots of comments and corrections to this paper. I also thank Giorgio Ottaviani for useful conversations, Guy Moshkovitz for useful questions, and the anonymous referee for suggestions and corrections. 

\section{Primitive and Compression Tensors}\label{section:primitive}
The following lemma gives a direct way to determine whether a tensor is primitive in general.

\begin{lem}\label{primitivelem}
Given $T$ with $1<\GR(T)=r<\SR(T)$, then $T$ is not primitive if and only if $\exists i< r$ such that by a permutation of $A,B$ and $C$, $\mathrm{codim}(A^*_i)=r-i$ and $A^*_i$ has a component of maximal dimension that is contained in a hyperplane of $A^*$.
\end{lem}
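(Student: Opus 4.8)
The plan is to unpack Definition~\ref{primitivedef} directly using the alternative definition~(\ref{altdef}). Since $\GR(T)=r$, by (\ref{altdef}) there is some factor — say after permutation it is $A$ — and some $i$ with $0\le i\le r$ realizing the minimum, i.e. $\mathrm{codim}(A^*_i)+i=r$. The tensor $T$ fails to be primitive precisely when we can write $T=X+Y$ with $\GR(X)=r-1$ and $\GR(Y)=1$. The first step is to understand what adding a geometric-rank-$1$ tensor $Y$ does: a tensor of geometric rank $1$ is, up to permutation, of the form $a\ot M$ where $M\in B\ot C$ is a single rank-one matrix times a linear form, equivalently $Y(A^*)$ is spanned by a single rank-$\le 1$ matrix (this is the $n=3$, $r=1$ case underlying Proposition~\ref{prop:npart3}). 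So the decomposition $T=X+Y$ corresponds, on the level of $A^*\to B\ot C$, to writing the space $T(A^*)$ as $X(A^*)+\langle \text{rank-one matrix}\rangle$ after possibly adjusting by an element of $A\ot B'$ or $A'\ot C$ — this is where the hyperplane in $A^*$ enters, since $Y$ is supported, up to the rank-one correction, on a one-dimensional piece, and $X$ "lives on" a complementary hyperplane.

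Next I would translate the condition $\GR(X)=r-1$ back through (\ref{altdef}): writing $A^*$ as $H\oplus\langle\ell\rangle$ with $H$ a hyperplane so that $X$ essentially comes from $X|_H$, the locus $X^*_i$ inside $H$ has $\mathrm{codim}_H(X^*_i)+i=r-1$ for the optimal $i$, i.e.\ $\mathrm{codim}_H(X^*_i)=r-1-i$. Comparing with $A^*_i$: the hyperplane section of $A^*_i$ should have codimension $r-i$ in $A^*$ — i.e.\ $\mathrm{codim}_{A^*}(A^*_i)=r-i$ — exactly when the relevant component of $A^*_i$ that governs the minimum lies inside the hyperplane $H$; if that component were not contained in any hyperplane, intersecting with $H$ would drop the dimension by one and change the count, obstructing the decomposition. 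So the ``only if'' direction is: from a decomposition $T=X+Y$, extract the hyperplane $H$ from $Y$, show $\mathrm{codim}(A^*_i)=r-i$ for the index $i$ achieving $\GR(X)=r-1$, and show the top-dimensional component of $A^*_i$ sits in $H$. The ``if'' direction reverses this: given such an $i$, a permutation, and a maximal-dimensional component $Z\subset A^*_i$ lying in a hyperplane $H=\ker\ell$, I would build $Y$ as the geometric-rank-$1$ tensor corresponding to $\ell$ (with an appropriate rank-one matrix in $B\ot C$ chosen from the matrices $T(\alpha)$ for $\alpha\in Z$) and set $X=T-Y$, then verify via (\ref{altdef}) that $\GR(X)=r-1$, using that removing the $\ell$-direction only affects the codimension count by the one forced unit and that $i<r$ guarantees $X$ is not forced to have slice rank collapse.

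The main obstacle I anticipate is the bookkeeping in the ``if'' direction: verifying that the constructed $X=T-Y$ genuinely has $\GR(X)=r-1$ and not smaller, which requires checking that no other factor ($B$ or $C$) or other index $i'$ produces a smaller value in (\ref{altdef}) for $X$ — this needs the hypothesis $\GR(T)=r$ and $1<r<\SR(T)$ to rule out degenerate collapses, and careful use of how $A^*_{i'}(X)$ relates to $A^*_{i'}(T)$ under subtracting a rank-one--supported tensor (the rank can change by at most one on each matrix, so $A^*_{i'}(T)\cap H \subseteq A^*_{i'+1}(X)$-type containments must be tracked). A secondary subtlety is the phrase ``component of maximal dimension'': one must ensure that it is a genuine irreducible component of the determinantal variety $A^*_i$ of $T(A^*)$ whose dimension equals $\dim A^*-(r-i)$ that is being placed in the hyperplane, and argue the decomposition only sees such a component — handling the case where $A^*_i$ is irreducible versus reducible uniformly.
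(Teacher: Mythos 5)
Your overall plan — split $T$ along a hyperplane in $A^*$ and compare the determinantal loci of $T$ and of the ``hyperplane part'' — is the same strategy the paper uses. But two of the concrete steps are wrong in a way that matters, and without fixing them the proof does not go through.

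First, your description of a geometric-rank-$1$ tensor is incorrect. $\GR(Y)=1$ is equivalent to $\SR(Y)=1$, which (after a permutation and a basis change) means $Y=a_1\ot M$ for \emph{some} matrix $M\in B\ot C$, with no restriction at all on the rank of $M$. Your claim that $Y(A^*)$ is ``spanned by a single rank-$\le1$ matrix'' is false, and the rank-one matrix you propose to build $Y$ out of in the converse direction is not the right object. The tensor actually used is $Y':=T|_{\langle a_1\rangle\ot B\ot C}$, whose one slice is $T(\alpha_1)$ — typically a matrix of large rank, and in particular not drawn from $T(Z)$ (note $\alpha_1\notin Z$ since $Z$ lies in the hyperplane $A'^*$). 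Second, you miss the move that makes the forward direction clean: given an arbitrary decomposition $T=X+Y$ with $Y\in\langle a_1\rangle\ot B\ot C$, one does not work with $X$ directly but replaces it with $X':=T|_{A'\ot B\ot C}$. Because $Y$ has no component in $A'\ot B\ot C$, one has $X'=X|_{A'\ot B\ot C}$, a restriction of $X$, so $\GR(X')\le\GR(X)=r-1$; combined with subadditivity ($r=\GR(T)\le\GR(X')+\GR(Y')\le r$) this forces $\GR(X')=r-1$ exactly, and then applying (\ref{altdef}) to $X'$ and intersecting the resulting low-codimension locus with $A'^*$ produces the desired component of $A^*_i$ inside the hyperplane. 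You flagged ``verifying $\GR(X)=r-1$ and not smaller'' as the main obstacle; the resolution is precisely this subadditivity bookkeeping, which your rank-one picture of $Y$ would not support. As written, both directions of your argument rest on an incorrect normal form for $Y$, so the proof has a genuine gap.
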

\begin{proof}
Let $\{a_i\}_{i=1}^{\aaa}$ be a basis of $A$, and $\{\alpha_i\}_{i=1}^{\aaa}$ be the dual basis of $A^*$. Write $A':=\langle a_2,\cdots,a_\aaa\rangle$, so $A'^*=\langle \alpha_2,\cdots,\alpha_\aaa\rangle$.

($\Rightarrow$) $T$ is not primitive if and only if we can decompose $T=X+Y$ with $\GR(X)=r-1$ and $\GR(Y)=1$. Since $\GR(Y)=1$ if and only if $\SR(Y)=1$,  by permuting $A,B$ and $C$ assume $\ml_A(Y)=1$, and by changing basis of $A$ assume $Y\in \langle a_1\rangle\ot B\ot C$. 

Then $T=X'+Y'$ where $X':=T|_{A'\ot B\ot C}$ and $Y':=T|_{\langle a_1\rangle\ot B\ot C}$. Since $X'=X|_{A'\ot B\ot C}$, $\GR(X')\leq \GR(X)=r-1$. By subadditivity of geometric rank and $\SR(Y')=\GR(Y')=1$, $\GR(X')=r-1$. By (\ref{altdef}) there exists $i\leq r-1$ such that $\mathrm{codim}\{\alpha\in A^*\mid  \mathrm{rank}(X'(\alpha))\leq i\}\leq r-1-i$. Then $\{\alpha\in A^*\mid  \mathrm{rank}(X'(\alpha))\leq i\}\cap A'^*\subset A^*_i$ has codimension $r-i$ in $A^*$ and is contained in a hyperplane.

($\Leftarrow$) Assume $\mathrm{codim}(A^*_i)=r-i$ and $A^*_i$ has a component $Z$ of maximal dimension contained in $A'^*$. Let $X'$ and $Y'$ be defined the same as above. By definition $\{\alpha\in A'^*\mid  \mathrm{rank}(X'(\alpha))\leq i\}\supset Z$ so has codimension at most $r-i$ in $A^*$, then its codimension is at most $r-1-i$ in $A'^*$. Since $X'\in A'\ot B\ot C$, $\GR(X')\leq r-1$. By $T=X'+Y'$ and subadditivity of geometric rank, $\GR(X')=r-1$ and $\GR(Y')=1$.
\end{proof}

\begin{cor}\label{MMprimitive}
For positive integers $e\leq h\leq l$, $M_{\langle e,h,l \rangle}$ is primitive if $e\geq 2$ and $e+h\geq l$, and it is compression otherwise.
\end{cor}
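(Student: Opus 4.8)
The plan is to apply Lemma~\ref{primitivelem} to the matrix multiplication tensor $M_{\langle e,h,l\rangle}$, computing explicitly the loci $A^*_i$ from the block form (\ref{mm}) and checking whether a maximal-dimensional component is contained in a hyperplane. First I would recall from \cite{KMZ20} (or rederive) that $\GR(M_{\langle e,h,l\rangle})=\SR(M_{\langle e,h,l\rangle})=eh-\lfloor(e+h-l)^2/4\rfloor$ when $e+h\geq l$, so that in this regime $\GR=\SR$ and Definition~\ref{primitivedef} via Lemma~\ref{primitivelem} does not literally apply — hence the ``compression'' conclusion must be reached differently in the boundary cases. In fact the dichotomy splits naturally: when $e+h\le l$ the tensor is compression because $M_{\langle e,h,l\rangle}(A^*)\subset A'\ot B + A\ot B'$ for subspaces realizing $p+q = e+h = \mathrm{rank}$ (each block $D$ is an $e\times h$ matrix of independent entries, so it has bounded rank $\min(e,h)=e$, and stacking $l$ copies gives bounded rank... one must be careful, but the compression structure is visible from the block-diagonal shape), giving $\SR\le e+h$; combined with $\GR = eh$ in this range and the chain $\GR\le\SR$, one needs $e+h\ge eh$, i.e. essentially $e=1$, so the genuinely-compression statement for $e\le h\le l$, $e+h\le l$ is really only the case $e=1$ — I would double-check the intended scope of ``compression otherwise'' here, since for $e\geq 2$ and $e+h<l$ the tensor has $\GR=eh<\SR$ and is neither (it would be a third case); possibly the corollary's ``otherwise'' tacitly means $e=1$, where $M_{\langle 1,h,l\rangle}$ is a sum of $h$ (or $\min(h,l)$) rank-one pieces and is compression of geometric rank $h$.

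For the primitive direction ($e\ge 2$, $e+h\ge l$): I would show $M_{\langle e,h,l\rangle}$ cannot be written as $X+Y$ with $\GR(X)=r-1$, $\GR(Y)=1$ where $r=eh-\lfloor(e+h-l)^2/4\rfloor$. By the proof pattern of Lemma~\ref{primitivelem}, this reduces to showing that for every $i<r$ with $\mathrm{codim}(A^*_i)=r-i$ (and the same for $B,C$ after permutation), no irreducible component of $A^*_i$ of maximal dimension lies in a hyperplane. The key computation: a general element of $M_{\langle e,h,l\rangle}(A^*)$ is $\mathrm{diag}(x,x,\dots,x)$ with $x$ an $e\times h$ matrix of independent linear forms, so $\mathrm{rank}$ of this block-diagonal matrix is $l\cdot\mathrm{rank}(x)$ (here using $e\le h$, so $x$ has rank $\le e$); thus $A^*_i$ is empty unless $l\mid i$, and $A^*_{lj} = \{x : \mathrm{rank}(x)\le j\}$, the affine cone over $\sigma_j(\mathrm{Seg}(\mathbb P^{e-1}\times\mathbb P^{h-1}))$, which is irreducible of codimension $(e-j)(h-j)$. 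So the only relevant $i$ are multiples of $l$, and the corresponding component is the (irreducible) generic determinantal variety $H_j$ in the $e\times h$ matrix space — which is $\mathrm{GL}_e\times\mathrm{GL}_h$-invariant and manifestly not contained in any hyperplane (a cone that is not a hyperplane itself and is invariant under a large linear group cannot lie in a proper linear subspace). One checks $\mathrm{codim}(A^*_{lj}) + lj = (e-j)(h-j) + lj$, minimized at $j = \lfloor(e+h-l)/2\rfloor$ (clamped to $[0,e]$), recovering $r$; for this minimizing $j$ the component has maximal dimension and is not in a hyperplane, and similarly after permuting to $B^*$ or $C^*$ (the roles of $e,h,l$ permute but the matrix-space structure is the same). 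By Lemma~\ref{primitivelem}, $M_{\langle e,h,l\rangle}$ is primitive.

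The main obstacle I anticipate is the careful bookkeeping at the boundary $e+h=l$ and clarifying the exact meaning of ``compression otherwise'' — in particular reconciling the trichotomy (primitive / compression / neither) with the corollary's binary phrasing, and handling the edge effect where the minimizing $j$ is clamped to $0$ or $e$ (when $j=0$, $A^*_0=\{0\}$ forces $r = eh$ and one argues directly that $M_{\langle e,h,l\rangle}(A^*)$ — which itself has bounded rank but is concise and $\mathrm{GL}$-invariant — cannot be split off a rank-one summand). A secondary technical point is verifying that for the minimizing $j$ we indeed have $\mathrm{codim}(A^*_{lj}) = r - lj$ with equality (so Lemma~\ref{primitivelem}'s hypothesis ``$\mathrm{codim}(A^*_i) = r-i$'' is met and must be refuted via the hyperplane condition) rather than strict inequality — this is exactly the content of $r$ being the minimum, so it holds for at least one such $j$, and the non-containment-in-a-hyperplane of the irreducible determinantal component then finishes the argument.
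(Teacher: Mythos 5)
Your treatment of the primitive direction is essentially the paper's: apply Lemma~\ref{primitivelem}, observe that for the block form (\ref{mm}) the locus $A^*_i$ is nonempty (and of the right codimension) only when $i$ is a multiple $lj$, in which case it is the affine cone over $\sigma_j(\mathrm{Seg}(\mathbb P^{e-1}\times\mathbb P^{h-1}))$, irreducible and not contained in any hyperplane, and check that the minimizing $j$ in the alternative formula (\ref{altdef}) has $j\geq 1$ when $e\geq 2$ and $e+h\geq l$. The paper computes the codimension via \cite[Theorem~2.1]{Eis88} and notes $k>1$; your $\mathrm{GL}_e\times\mathrm{GL}_h$-invariance argument for non-containment is a perfectly good substitute. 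That part is fine.

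The compression direction is where your proposal goes wrong, in two places. First, the opening claim that $\GR(M_{\langle e,h,l\rangle})=\SR(M_{\langle e,h,l\rangle})=eh-\lfloor(e+h-l)^2/4\rfloor$ when $e+h\geq l$ is false: by Definition~\ref{primitivedef} that equality would make $M_{\langle e,h,l\rangle}$ compression, hence (for $r>1$) not primitive, contradicting the very conclusion you are trying to prove for $M_{\langle 2\rangle}$. Second, the claim $\SR\leq e+h$ in the regime $e+h\leq l$ is unjustified: $M_{\langle e,h,l\rangle}(A^*)$ is not a space of matrices of bounded rank $e+h$ --- as you yourself flagged, stacking $l$ copies of $D$ along the diagonal produces matrices of generic rank $le$, so the compression-space structure you invoke simply is not there. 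That faulty inequality is what drives you to the incorrect conclusion that the compression case is ``really only $e=1$''. The actual argument is much simpler and you never state it: $\SR(T)\leq\ml_A(T)=eh$ always (take $T_1=T$, $T_2=T_3=0$ in the definition of slice rank), and by \cite[Theorem~6.1]{KMZ20}, $\GR(M_{\langle e,h,l\rangle})=eh$ whenever $e+h\leq l$ or $e=1$; combining with $\GR\leq\SR$ gives $\GR=\SR=eh$, which is precisely the definition of a compression tensor. So the corollary's ``otherwise'' correctly covers both $e=1$ and $e+h<l$, not just $e=1$. (You were right to be suspicious about the boundary $e+h=l$, but the resolution is the opposite of what you suggest: when $e+h\leq l$ the tensor is always compression by the argument above, whatever $e$ is.)
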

\begin{proof}
By Theorem 6.1 of \cite{KMZ20}, $\GR(M_{\langle e,h,l \rangle})=eh$ if $e+h\leq l$ or $e=1$. Since $\GR(M_{\langle e,h,l \rangle})\leq \SR(M_{\langle e,h,l \rangle})\leq \ml_A(M_{\langle e,h,l \rangle})=eh$, we have $\GR(M_{\langle e,h,l \rangle})=\SR(M_{\langle e,h,l \rangle})=eh$ and therefore $M_{\langle e,h,l \rangle}$ is compression.

Assume $e\geq 2$ and $e+h\geq l$. The component of the maximal dimension $Z\subset A_i$ is determined by all $k\times k$ minors of $D$, where $k=\min\{e,\lceil \frac{i+1}{l}\rceil\}$. By \cite[Theorem~2.1]{Eis88}, $\mathrm{codim}(A_i)=\mathrm{codim}(Z)=(e+1-k)(h+1-k)$. So (\ref{altdef}) achieves minimum only at $i=\lceil\frac{e+h-l}{2}\rceil l$ and $\lfloor\frac{e+h-l}{2}\rfloor l$. Then $k>1$ and $Z$ is not contained in any hyperplane.
\end{proof}

Although we define the primitive and compression tensors as analogues of primitive and compression spaces of matrices, their relations are subtle.

By definition $T$ is compression of $\GR(T)=r$ if at least one of $T(A^*),T(B^*)$ or $T(C^*)$ is a compression space of bounded rank $r$ and none has bounded rank $r-1$. The converse is true only for $r\leq 2$, as $T:=\sum_{i=1}^{m}(a_1\ot b_i\ot c_i+a_i\ot b_1\ot c_i+a_i\ot b_i\ot c_1)$ is compression of $\GR(T)=3$ but $T(A^*),T(B^*)$ and $T(C^*)$ contain elements of full rank.

If $T$ is primitive of $\GR(T)=r$ and $T(A^*)$ has bounded rank $r$, then $T(A^*)$ is primitive of bounded rank $r$ (after deleting zero rows and columns). Similarly for $T(B^*)$ and $T(C^*)$. However $T$ could be primitive when $T(A^*),T(B^*)$ and $T(C^*)$ do not have bounded rank $r$. 

For example, $M_{\langle 2\rangle}$ is primitive of geometric rank 3 by Corollary \ref{MMprimitive}. But since $M_{\langle 2\rangle}(A^*)$ can be written as the block diagonal form (\ref{mm}), generic matrices in $M_{\langle 2\rangle}(A^*)$ have full rank 4. Therefore $M_{\langle 2\rangle}(A^*)$ does not have bounded rank 3. For the same reason, $M_{\langle 2\rangle}(B^*)$ and $M_{\langle 2\rangle}(C^*)$ do not either.

There is no primitive space of bounded rank 1, and all primitive spaces bounded rank $2$ and $3$ are listed in \cite{Atk83,EH88}. We check every such primitive space and conclude that for $r\leq 3$, if $T(A^*)$ is primitive of bounded rank $r$, then $T$ is primitive of geometric rank $r$. It is not known if this property persists when $r>3$, because the set of all primitive spaces of larger bounded rank are not classified yet.

\begin{lem}\label{decomplem}
If $T$ is not compression (i.e., $\GR(T)<\SR(T)$), then there exist a primitive tensor $T_p$ and a compression tensor $T_c$, such that $T=T_p+T_c$ and $\GR(T_p)+\GR(T_c)=\GR(T)$.
\end{lem}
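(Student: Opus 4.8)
The plan is to induct on $\GR(T)=r$, peeling off rank-one pieces one at a time using Lemma~\ref{primitivelem}. The base cases $r\le 1$ are trivial (a geometric rank $0$ or $1$ tensor is itself compression after subtracting off the appropriate part, or is already primitive with the other summand zero). So assume $r=\GR(T)\ge 2$ and $\GR(T)<\SR(T)$. If $T$ is already primitive we are done by taking $T_p=T$ and $T_c=0$. Otherwise, by Lemma~\ref{primitivelem} there is an index $i<r$ and, after a permutation of $A,B,C$ and a change of basis of $A$, we may write $A=\langle a_1\rangle\oplus A'$ with $A'=\langle a_2,\dots,a_\aaa\rangle$ so that $\mathrm{codim}(A^*_i)=r-i$ and $A^*_i$ has a component $Z$ of maximal dimension contained in the hyperplane $A'^*$.

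Following the $(\Leftarrow)$ direction of Lemma~\ref{primitivelem}, set $X':=T|_{A'\ot B\ot C}$ and $Y':=T|_{\langle a_1\rangle\ot B\ot C}$, so that $T=X'+Y'$ with $\GR(X')=r-1$, $\GR(Y')=1$, and $\ml_A(Y')=1$ hence $\SR(Y')=\GR(Y')=1$; thus $Y'$ is a compression tensor. Now I would apply the inductive hypothesis to $X'$, distinguishing two cases. If $\GR(X')=\SR(X')$, then $X'$ is itself compression; but I must then argue that the sum of a compression tensor $X'$ and the rank-one compression tensor $Y'$ can be reorganized into a single compression tensor $T_c:=T$, using subadditivity of slice rank together with $\GR(T)=\SR(T)$ — wait, we assumed $\GR(T)<\SR(T)$, so this sub-case cannot actually occur, which is a clean simplification. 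Hence $\GR(X')<\SR(X')$, and by induction $X'=T_p+T''_c$ with $T_p$ primitive, $T''_c$ compression, and $\GR(T_p)+\GR(T''_c)=\GR(X')=r-1$. Then $T=T_p+(T''_c+Y')$, and I would take $T_c:=T''_c+Y'$.

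The main obstacle is showing that $T_c:=T''_c+Y'$ is genuinely a compression tensor with $\GR(T_c)=\GR(T''_c)+1$, i.e.\ that adding an independent rank-one piece to a compression tensor keeps it compression and increments its geometric rank by exactly $1$. For this I would use the structural description of compression tensors: $T''_c$ being compression means (up to permutation) one of its flattenings, say $T''_c(A^*)$, is a compression space of bounded rank $\GR(T''_c)$, meaning $T''_c(A^*)\subset A'_0\ot B+ A_0\ot B'_0$ for subspaces of total dimension $\GR(T''_c)$. Since $Y'=a_1\ot v$ for some $v\in B\ot C$ with $\ml_A(Y')=1$, and the relevant flattening of $Y'$ lands in a $1$-dimensional slice, one enlarges either the "row space" or "column space" of the compression structure by one dimension — here I need to be careful that the permutation putting $X'$'s summand $T''_c$ into standard compression position is compatible with the coordinate on $A$ along which $Y'$ lives, which may require re-running Lemma~\ref{primitivelem} with the permutation chosen so that the hyperplane $A'^*$ is respected. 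Once $T_c$ is compression with $\GR(T_c)=\GR(T''_c)+1$, we get $\GR(T_p)+\GR(T_c)=\GR(T_p)+\GR(T''_c)+1=(r-1)+1=r=\GR(T)$, and by subadditivity this forces equality, completing the induction. An alternative to the bookkeeping with permutations is to prove directly, as a short separate lemma, that if $U$ is compression and $\ml_A(W)=1$ then $U+W$ is compression with $\GR(U+W)\le\GR(U)+1$; combined with subadditivity of geometric rank this yields $\GR(U+W)\in\{\GR(U),\GR(U)+1\}$, and a dimension/genericity argument pins it to $\GR(U)+1$ in our situation because removing $Y'$ from $T$ strictly drops the geometric rank.
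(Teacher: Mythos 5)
Your overall strategy is essentially the same as the paper's: repeatedly use Lemma~\ref{primitivelem} to peel off a rank-one piece $Y'$ with $\GR(Y')=\SR(Y')=1$ and drop to $X'$ with $\GR(X')=r-1$, with the termination/contradiction coming from the observation that if one never hits a primitive summand then $T$ decomposes into $r$ rank-one pieces, forcing $\SR(T)=r=\GR(T)$. (The paper organizes this as an iteration and bundles all the $Y_i$ at the end; you organize it as an induction on $r$. Both are fine, and your observation that the sub-case $\GR(X')=\SR(X')$ cannot occur is correct and is just the contradiction in a different place.)

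Where your proposal goes off the rails is the step you call the ``main obstacle.'' You try to establish that $T_c:=T''_c+Y'$ is compression by appealing to a structural description of compression tensors as compression \emph{spaces of matrices} ($T(A^*)\subset A'\ot B+A\ot B'$). This is not the definition the paper uses — by Definition~\ref{primitivedef}, $T_c$ is compression means exactly $\GR(T_c)=\SR(T_c)$ — and, more importantly, the structural description you invoke is simply false for $r\geq 3$: the paper itself exhibits a compression tensor of geometric rank $3$ all of whose flattenings contain full-rank matrices. So the ``bookkeeping with permutations'' route is a dead end. Likewise, the ``short separate lemma'' you float (compression $U$ plus $\ml_A(W)=1$ stays compression) is not true in general, and the ``dimension/genericity argument'' invoked to pin down $\GR(T_c)$ is left vague. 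None of this is needed. The step is immediate from subadditivity of both ranks applied to the specific decomposition in hand: with $n':=\GR(T''_c)=\SR(T''_c)$ and $\GR(T_p)=r-1-n'$, subadditivity of $\GR$ gives $\GR(T_c)\leq\GR(T''_c)+\GR(Y')=n'+1$, while $r=\GR(T)\leq\GR(T_p)+\GR(T_c)$ gives $\GR(T_c)\geq n'+1$, so $\GR(T_c)=n'+1$. Then $\SR(T_c)\leq\SR(T''_c)+\SR(Y')=n'+1$ together with $\SR(T_c)\geq\GR(T_c)=n'+1$ gives $\SR(T_c)=n'+1=\GR(T_c)$, so $T_c$ is compression with the right rank count. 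This is exactly the argument the paper uses (applied in one shot to $T_c=Y_1+\cdots+Y_n$), and it replaces your entire final paragraph.
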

\begin{proof}
If $T$ is primitive, set $T_p=T$ and $T_c=0$.

If $T$ is not primitive, assume $\GR(T)=r$, then we can write $T=X_1+Y_1$ such that $\GR(X_1)=r-1$ and $\GR(Y_1)=1$. Similarly, whenever $X_i$ is not primitive or zero, we can write $X_i=X_{i+1}+Y_{i+1}$ such that $\GR(X_i)=r-i$ and $\GR(Y_1)=1$. If all $X_i$'s obtained this way are not primitive, we have a decomposition $T=Y_1+\cdots+Y_r$ where each $Y_i$ has geometric rank 1 so has slice rank 1. This implies $\mathrm{SR}(T)=r=\mathrm{GR}(T)$, contradicting the assumption $\GR(T)<\SR(T)$. 

So there exists $n<r$ such that $X_n$ is primitive, then we obtain $T=T_p+T_c$ where $T_p:=X_n$ and $T_c:=Y_1+\cdots+Y_n$. Since $\GR(T_p)=r-n$ and $\sum\GR(Y_i)=\sum\SR(Y_i)=n$, by subadditivity of geometric rank and slice rank, $\GR(T_c)=\SR(T_c)=n$. Therefore $T_c$ is compression.
\end{proof}

\begin{ex}[Above decomposition is not unique]
Let $T\in A\ot B\ot C=\BC^5\ot\BC^5\ot\BC^6$ be defined as
\begin{align*}
T:=&a_1\ot(b_2\ot c_1+b_3\ot c_2+b_4\ot c_3)+a_2\ot(b_1\ot c_1 -b_3\ot c_4 -b_4\ot c_5)\\
&+a_3\ot(b_1\ot c_2+b_2\ot c_4 -b_4\ot c_6)+a_4\ot(b_1\ot c_3 + b_2\ot c_5 +b_3\ot c_6)+a_5\ot b_5\ot c_6
\end{align*}
where $\{a_i\}_{i=1}^{5},\{b_j\}_{j=1}^{5}$ and $\{c_k\}_{k=1}^{6}$ are bases of $A,B$ and $C$ respectively. So
$$T(A^*)=\begin{pmatrix}
x_2 & x_3 & x_4 & 0 & 0 & 0\\
x_1 & 0 & 0 & x_3 & x_4 & 0\\
0 & x_1 & 0 & -x_2 & 0 & x_4\\
0 & 0 & x_1 & 0 & -x_2 & -x_3\\
0 & 0 & 0 & 0 & 0 & x_5
\end{pmatrix}.$$

Let $X_1:=T|_{A\ot B\ot \langle c_1,\cdots,c_5\rangle},Y_1:=T|_{A\ot B\ot \langle c_6\rangle}, X_2:=T|_{A\ot \langle b_1,\cdots,b_4\rangle\ot C}$ and $Y_2:=T|_{A\ot \langle b_5 \rangle \ot C}$. Since $X_1(A^*)$ consists of the first 5 columns of $T(A^*)$ and $X_2(A^*)$ consists of the first $4$ rows of $T(A^*)$, they are primitive spaces of bounded rank 3 (after deleting the zero columns and rows). So $X_1$ and $X_2$ are primitive of geometric rank $3$, and $T=X_1+Y_1=X_2+Y_2$ gives two different decompositions satisfying the conditions in Lemma \ref{decomplem}.
\end{ex}

By Lemma \ref{decomplem}, to classify the set of tensors of geometric rank at most $r$, it suffices to find all primitive tensors of geometric rank at most $r$. In terms of these notations, the classification of tensors of geometric rank at most 1 and 2 from \cite[Remark~2.6, Theorem~3.1]{GL20} can be rephrased as: 
\begin{itemize}
    \item There are no primitive tensors of geometric rank 1.
    \item The only primitive tensor of geometric rank 2 is (up to changes of bases) the skew-symmetric $3\times3\times3$ tensor.
\end{itemize}

\section{Determinantal Varieties of Bounded Codimensions}\label{section:determinantal}
Let $E\subset\mathbb{C}^{\aaa}\otimes\mathbb{C}^{\bbb}=:A\otimes B$ be a linear subspace of dimension $\ccc$. Fix a basis $\{e_i,1\leq i \leq \ccc\}$ of $E$ and bases of $A$ and $B$, then each $e_i$ can be written as an $\aaa\times\bbb$ matrix. Similar to how we represent $T(A^*)\subset B\ot C$ in Section 1.1, $E$ is represented by the matrix corresponding to a general point $\sum_i x_ie_i$ of $E$, i.e., $E=(y^i_j)_{1\leq i\leq \aaa,1\leq j\leq \bbb}$, where each $y^i_j$ is a linear form in the variables $x_1,\cdots,x_{\ccc}$. For two subspaces $F,F'\subset E$, let $F+F'$ denote the sum of the two corresponding matrices of linear forms. 

Denote the $(i_1,\cdots,i_k)\times(j_1,\cdots,j_k)$ minor of $E$ as $\Delta^{i_1,\cdots,i_k}_{j_1,\cdots,j_k}$ and $\Delta_k:=\Delta^{12\cdots k}_{12\cdots k}$. Unless otherwise stated, the codimension of a subset always refers to the codimension in $E$ or $\mathbb{P}E$. 

\subsection{Case $\mathrm{codim}(E_r)=1$}
This subsection studies the case $\mathrm{codim}(E_r)=1$, i.e. all nonzero $(r+1)\times(r+1)$ minors of $E$ has a common polynomial factor of degree at least 1. 

Lemma \ref{codim1lem} is a more detailed version of Lemma 6.4 of \cite{GL20}, and Lemma \ref{codim1lem2} generalizes Lemma 6.5 of \cite{GL20}.

\begin{lem}\label{codim1lem}
Let $E\subset\mathbb{C}^{\aaa}\otimes\mathbb{C}^{\bbb}$, $r<\aaa,\bbb$ and $E_r\neq E$. If there exists a degree $r+1$ polynomial $P$ dividing all $(r+1)\times(r+1)$ minors of $E$, then either $P$ factors into a product of linear forms, or $E\subset\mathbb{C}^{r+1}\otimes\mathbb{C}^{r+1}$.
\end{lem}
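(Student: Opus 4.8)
The plan is to analyze the factor $P$ together with the structure of the matrix of linear forms $E = (y^i_j)$, using the fact that $P$ has degree exactly $r+1$ and divides every $(r+1)\times(r+1)$ minor. The starting observation is that $E_r \neq E$ forces some $(r+1)\times(r+1)$ minor to be a nonzero polynomial of degree $r+1$; since $P$ divides it and $\deg P = r+1$, that minor equals $P$ up to a scalar. So $P$ is itself one of the maximal minors, and all other nonzero $(r+1)\times(r+1)$ minors are scalar multiples of $P$. The heart of the argument is then: either $P$ is squarefree-with-linear-factors (i.e. a product of linear forms), or it has an irreducible factor $Q$ of degree $\geq 2$, and I want to show the latter case forces $E \subset \mathbb{C}^{r+1}\otimes\mathbb{C}^{r+1}$, i.e. the matrix $E$ has at most $r+1$ nonzero rows and at most $r+1$ nonzero columns after a change of basis.

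First I would set up a Laplace/cofactor expansion argument. Suppose, after permuting rows and columns, that $\Delta^{1\cdots r+1}_{1\cdots r+1} = cP$ with $c\neq 0$. Expanding this minor along its last row, $P$ is a $\mathbb{C}$-linear combination $\sum_j y^{r+1}_j \cdot M_j$ of products of entries in row $r+1$ with $r\times r$ minors $M_j$ of the top $r$ rows. Now bring in a further row, say row $r+2$ (assuming $\aaa \geq r+2$): the minors $\Delta^{1\cdots r,\,r+2}_{1\cdots r+1}$, etc., are also divisible by $P$, but they have degree $r+1$, so each is again a scalar multiple of $P$. Comparing cofactor expansions of these various $(r+1)\times(r+1)$ minors that share the same top $r$ rows, I get relations forcing the entries of rows $r+1, r+2, \dots$ to be proportional in a strong sense — specifically, I expect to deduce that modulo the ideal generated by the $r\times r$ minors of the top $r$ rows, all remaining rows are scalar multiples of one another, hence (unless those extra rows vanish) $P$ itself would have to be a multiple of a linear form, contradicting irreducibility of the degree-$\geq 2$ factor $Q$. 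The cleanest packaging: the presence of an irreducible factor $Q$ of degree $\geq 2$ means $P$ is not a product of linear forms, and I must show the matrix cannot then have more than $r+1$ nonzero rows (and symmetrically columns).

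The precise mechanism I would use for this is the following. Consider the linear map and the rank stratification: on the hypersurface $\{P=0\} = E_r$, the general matrix has rank exactly $r$, so its kernel (and cokernel) is a line varying over $E_r$; this gives a rational map $E_r \dashrightarrow \mathbb{P}A^*$ (and $\dashrightarrow \mathbb{P}B^*$) sending a matrix to its kernel line. If $E$ is \emph{not} contained in $\mathbb{C}^{r+1}\otimes\mathbb{C}^{r+1}$, meaning $E$ involves more than $r+1$ rows or columns conciseness-wise, I would argue this kernel map is non-constant, and then play off the degree: the kernel of a corank-one matrix is spanned by the signed maximal minors (Cramer's rule), which here are — up to the common factor $P$ having been divided out — \emph{constant} on $E_r$ because all the relevant $(r+1)\times(r+1)$ minors are proportional to $P$. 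A constant kernel line means $E$ annihilates a fixed vector, i.e. (after change of basis) $E$ has a zero column, reducing $\bbb$; iterating, $\bbb \leq r+1$, and symmetrically $\aaa \leq r+1$. When instead the kernel map is constant for a trivial reason but $P$ has a factor $Q$ of degree $\geq 2$, I combine this with the previous cofactor bookkeeping to force the factorization into linear forms.

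I expect the main obstacle to be the bookkeeping in the cofactor-comparison step: making rigorous the passage from "all $(r+1)\times(r+1)$ minors are scalar multiples of $P$" to a structural constraint on the rows and columns of $E$, without getting lost in indices. The slick route is the Cramer's-rule / constant-kernel argument, which sidesteps the index juggling by working coordinate-free on the rank-$r$ locus; the technical point to watch there is ensuring the kernel map is genuinely non-constant whenever $E$ is concise in more than $r+1$ of its rows or columns, and handling the boundary strata where the rank drops below $r$ (these lie in codimension $\geq 2$ inside $E_r$ by the generic determinantal codimension count, so they do not affect the argument on the dense open rank-$r$ stratum). One should also verify the degenerate case where $P$ has repeated linear factors is correctly absorbed into the "product of linear forms" conclusion. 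I would organize the write-up as: (1) reduce to $P$ being a maximal minor, (2) set up the corank-one kernel map on $E_r$, (3) Cramer's rule $\Rightarrow$ constant kernel $\Rightarrow$ a zero column when $\bbb > r+1$, iterate and symmetrize, (4) in the remaining case conclude $P$ splits into linear forms.
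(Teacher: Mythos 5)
Your step (2)–(3) "constant kernel $\Rightarrow$ zero column" contains a genuine gap, and it is exactly where the two branches of the conclusion separate. First, a framing issue: at a generic point of $E_r = \{P=0\}$ the matrix has rank $r$, so its kernel has dimension $\bbb - r$, which is $\geq 2$ once $\bbb > r+1$; "its kernel is a line varying over $E_r$" is false for the full $\aaa\times\bbb$ matrix. What you actually have is the Cramer-type statement for the $(r+1)\times(r+2)$ submatrix formed by rows $1,\dots,r+1$ and columns $1,\dots,r+2$ on the open set $\{P\neq 0\}$: its one-dimensional kernel is spanned by the signed maximal minors, which by hypothesis are all scalar multiples of $P$, so the kernel \emph{direction} is constant. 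This is, up to notation, exactly the paper's cofactor computation leading to equation (\ref{cofactor}) and then to $E'$ of the form (\ref{E'}). But the conclusion you draw from it is too strong: the constant kernel direction lets you annihilate only the top $r+1$ entries of column $r+2$ (and symmetrically the leftmost $r+1$ entries of each extra row). The entries below row $r+1$ in the extra columns are not touched, and there is no reason for them to vanish. So after these operations $E$ becomes block-diagonal $\mathrm{diag}(B_1,B_2)$ with $B_1$ of size $(r+1)\times(r+1)$, and $B_2$ can be nonzero. The claim "$E$ has a zero column, reducing $\bbb$; iterating, $\bbb\leq r+1$" is therefore incorrect; it only holds if $B_2=0$.

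The case $B_2\neq 0$ is not a remainder to be absorbed by "the previous cofactor bookkeeping" — it is the whole point of the other branch of the dichotomy, and it requires a new argument. The paper handles it as follows: if some entry $\tilde{y}^{r+2}_{r+2}$ of $B_2$ is nonzero, then from $\Delta^{i_1\cdots i_r,r+2}_{j_1\cdots j_r,r+2} = \Delta^{i_1\cdots i_r}_{j_1\cdots j_r}\,\tilde{y}^{r+2}_{r+2}$ being a scalar multiple of $P$ one deduces that \emph{all} $r\times r$ minors of $B_1$ are proportional to one another. This lets one rerun the same clearing step one size down inside $B_1$, producing a diagonal entry $\tilde{y}^{r+1}_{r+1}$, and by iterating one fully diagonalizes $B_1$, so $P = \Delta_{r+1} = y^1_1\tilde{y}^2_2\cdots\tilde{y}^{r+1}_{r+1}$ is a product of linear forms. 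Your proposal has a placeholder in its step (4) where this recursive diagonalization must go, and without it the proof is incomplete. I would suggest keeping your reduction (1)–(2), replacing the global "kernel line on $E_r$" picture with the precise statement about $(r+1)\times(r+2)$ submatrices (equivalently the cofactor/adjugate identity), and then filling in the $B_2\neq 0$ branch with the recursive argument above.
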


\begin{proof}
The hypothesis that all $(r+1)\times (r+1)$ minors of $E$ are equal up to scale is invariant under changes of bases in $A$ and $B$, so we are allowed to perform invertible row and column operations.

Since $E_r\neq E$, there exists a nonzero $(r+1)\times (r+1)$ minor of $E$. By changes of bases we can assume $\Delta_{r+1}=P$. We further assume $\Delta_{r},\cdots,\Delta_2,y^1_1$ are nonzero.

Write $E=(y^i_j)_{1\leq i\leq \aaa,1\leq j\leq \bbb}$. Consider the the block consisting of the first $r+1$ rows and the first $r+2$ columns:
$$\begin{pmatrix}
y^1_1 & \cdots & y^1_{r+1} & y^1_{r+2} \\
\vdots &      & \vdots & \vdots \\
y^{r+1}_1 & \cdots & y^{r+1}_{r+1} & y^{r+1}_{r+2} \\
\end{pmatrix}.$$

Let $I:=(1,2,\cdots,r+1)$. For $j\leq r+1$, expand the minor consisting all columns except the $j$-th along the last column, then we have
$$c_jP=\Delta^{I}_{I\backslash j, r+2}=\sum_{i=1}^{r+1}(-1)^{i+(r+2)-1}y^i_{r+2}\Delta^{I\backslash i}_{I\backslash j}$$
for some $c_j\in\mathbb{C}$. Thus,
$$\begin{pmatrix}
c_1\\
\vdots\\
c_{r+1}
\end{pmatrix}P=
(-1)^{r+1}\begin{pmatrix}
(-1)^{i}\Delta^{I\backslash i}_{I\backslash j}
\end{pmatrix}_{j,i=1}^{r+1}
\begin{pmatrix}
y^1_{r+2}\\
\vdots\\
y^{r+1}_{r+2}
\end{pmatrix}.$$
For every $j\leq r+1$, multiply $(-1)^j$ to the $j$-th row,
\begin{equation}\label{cofactor}
    (-1)^{r+1}\begin{pmatrix}
(-1)^1 c_1\\
\vdots\\
(-1)^{r+1} c_{r+1}
\end{pmatrix}P=
\begin{pmatrix}
(-1)^{i+j}\Delta^{I\backslash i}_{I\backslash j}
\end{pmatrix}_{j,i=1}^{r+1}
\begin{pmatrix}
y^1_{r+2}\\
\vdots\\
y^{r+1}_{r+2}
\end{pmatrix}.
\end{equation}
Now $((-1)^{i+j}\Delta^{I\backslash i}_{I\backslash j})_{j,i=1}^{r+1}$ is the cofactor matrix of the transpose of $(y^i_j)_{i,j=1}^{r+1}$, whose determinant is $\Delta_{r+1}=P$ by assumption. So
$$(-1)^{r+1}
\begin{pmatrix}
y^1_1&\cdots&y^1_{r+1}\\
\vdots & &\vdots\\
y^{r+1}_1&\cdots&y^{r+1}_{r+1}
\end{pmatrix}
\begin{pmatrix}
-c_1\\
\vdots\\
(-1)^{r+1} c_{r+1}
\end{pmatrix}=
\begin{pmatrix}
y^1_{r+2}\\
\vdots\\
y^{r+1}_{r+2}
\end{pmatrix}.$$
Therefore the column vector $(y^1_{r+2},\dots,y^{r+1}_{r+2})^t$ is a linear combination
of all column vectors appearing in the upper left $(r+1)\times (r+1)$ block of $E$, i.e.  $(y^1_{j},\cdots,y^{r+1}_{j})^t, 1\leq j\leq r+1$. By adding linear combinations
of the first $r+1$ columns to the $(r+2)$-th, we may make the first $r+1$ entries of the $(r+2)$-th column equal to zero. Similarly, we may make
the all last $\bbb-r-1$ entries in the first $r+1$ rows equal to zero. By the same argument, we may do 
the same for the first $r+1$ columns. Then the matrix $E$ becomes:
\begin{equation}\label{E'}
E'=\begin{pmatrix}
y^1_1 & \cdots & y^1_{r+1} & 0 & \cdots & 0\\
\vdots &      & \vdots & \vdots &  &\vdots\\
y^{r+1}_1 & \cdots & y^{r+1}_{r+1} & 0 & \cdots & 0\\
0 & \cdots & 0 & \Tilde{y}^{r+2}_{r+2} & \cdots & \Tilde{y}^{r+2}_{\bbb}\\
\vdots &  & \vdots & \vdots &   & \vdots\\
0 & \cdots & 0 & \Tilde{y}^{\aaa}_{r+2} & \cdots & \Tilde{y}^{\aaa}_{\bbb}
\end{pmatrix}.
\end{equation}

If $\Tilde{y}^{r+1+i}_{r+1+j}=0,\forall i,j>0$, let $A'$ be the space corresponding to the first $r+1$ rows of $E'$ and $B$ the first $r+1$ columns, then $E\subset A'\otimes B'=\mathbb{C}^{r+1}\otimes\mathbb{C}^{r+1}$.

If there exists a nonzero $\Tilde{y}^{r+1+i}_{r+1+j}$, by changes of bases assume it is $\Tilde{y}^{r+2}_{r+2}$. For $1\leq i_1<\cdots<i_{r}\leq r+1$, $1\leq j_1<\cdots<j_{r}\leq r+1$, the $(r+1)\times (r+1)$ minor $\Delta^{i_1,\cdots,i_{r},r+2}_{j_1,\cdots,j_{r},r+2}=\Delta^{i_1,\cdots,i_{r}}_{j_1,\cdots,j_{r}}\Tilde{y}^{r+2}_{r+2}$ is a multiple of $\Delta_{r+1}$. Hence all $r\times r$ minors of the upper left $(r+1)\times (r+1)$ block equal up to scale. 

By assumption $\Delta_{r}\neq 0$. Adding a linear combination of the first $r$ columns to the $(r+1)$-th column and a linear combination of the first $r$ rows to the $(r+1)$-th row, we can set all entries in $(r+1)$-th column and row zero except the $(r+1,r+1)$-th entry. Since $\Delta_{r+1}\neq 0$, the $(r+1,r+1)$-th entry is nonzero, written as $\Tilde{y}^{r+1}_{r+1}$. Then $E'$ becomes:
$$E''=\begin{pmatrix}
y^1_1 & \cdots & y^1_{r}&0 & 0 & \cdots & 0\\
\vdots &       &\vdots&0 & \vdots &  &\vdots\\
y^{r}_1 & \cdots & y^{r}_{r} &0 &0 & \cdots & 0\\
0 & \cdots & 0 & \Tilde{y}^{r+1}_{r+1}& 0 & \cdots & 0\\
0 & \cdots & 0 & 0 & \Tilde{y}^{r+2}_{r+2} & \cdots & \Tilde{y}^{r+2}_{\bbb}\\
\vdots &  & \vdots & 0 & \vdots &   & \vdots\\
0 & \cdots & 0 & 0 & \Tilde{y}^{\aaa}_{r+2} & \cdots & \Tilde{y}^{\aaa}_{\bbb}
\end{pmatrix}.$$

Repeat the above process on the upper left $k\times k$ blocks consecutively for $k=r-1,r-2,\cdots,2$, then $E''$ becomes:
$$\begin{pmatrix}
y^1_1 &  &  &  &  &  &\\
 &  \Tilde{y}^2_2 &  &  &  &  &\\
 & & \ddots &  &  &  &\\
 & & &\Tilde{y}^{r+1}_{r+1}&  &  &\\
 & & & & \Tilde{y}^{r+2}_{r+2} & \cdots & \Tilde{y}^{r+2}_{\bbb}\\
 & & & & \vdots &   & \vdots\\
 & & & & \Tilde{y}^{\aaa}_{r+2} & \cdots & \Tilde{y}^{\aaa}_{\bbb}\\
\end{pmatrix}.$$

Therefore $\Delta_{r+1}=y^1_1\Tilde{y}^2_2\cdots\Tilde{y}^{r+1}_{r+1}$ which factors into a product of linear forms.
\end{proof}

\begin{lem}\label{codim1lem2}
Let $E\subset\mathbb{C}^{\aaa}\otimes\mathbb{C}^{\bbb}$, $1\leq r\leq\min\{\aaa,\bbb\}-2$ and $E\neq E_{r+1}$. If there exists a polynomial $P$ of degree $k$ dividing all $(r+1)\times(r+1)$ minors, then:
\begin{enumerate}[(1)]
    \item if $k>r/2+1$ and for any nonzero $(r+1)\times(r+1)$ minor $\Delta$, $P$ and $\Delta/P$ are coprime, then $P$ is a product of linear forms;
    
    \item if $r$ is even, $k=r/2+1$ and for any nonzero $(r+1)\times(r+1)$ minor $\Delta$, $P$ and $\Delta/P$ are coprime, then either $P$ is a product of linear forms or $E\subset \mathbb{C}^{r+2}\otimes \mathbb{C}^{r+2}$;
    
    \item if $r\geq3$ is odd, $k=(r+1)/2$ and $P$ is irreducible, then either $E\subset \mathbb{C}^{r+2}\otimes \mathbb{C}^{\bbb}$, $\mathbb{C}^{\aaa}\otimes \mathbb{C}^{r+2}$, $\mathbb{C}^{r+3}\otimes \mathbb{C}^{r+3}$, or up to changes of bases $E$ has a nonsingular $(r+1)\times(r+1)$ block such that all $r\times r$ minors of it are multiples of $P$. 
\end{enumerate}
\end{lem}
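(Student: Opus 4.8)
The plan is to combine the cofactor--reduction strategy used in the proof of Lemma \ref{codim1lem} (and, I expect, in parts (1)--(2)) with a preliminary step that extracts rigid numerical data from the hypothesis by localizing at the prime $P$. Throughout write $R=\mathbb{C}[x_1,\dots,x_\ccc]$, so that a general point of $E$ is an $\aaa\times\bbb$ matrix $M=M(x)$ of linear forms over $R$, and let $\rho$ be the generic rank of $M$; since $E\neq E_{r+1}$ we have $\rho\geq r+2$.

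First I would pass to the DVR $R_{(P)}$ (legitimate since $P$ is irreducible, hence prime). Writing the Smith normal form $M\sim\mathrm{diag}(P^{a_1},\dots,P^{a_\rho},0,\dots,0)$ over $R_{(P)}$ with $0\leq a_1\leq\cdots\leq a_\rho$, one has $I_t(M)R_{(P)}=(P^{a_1+\cdots+a_t})$ for $t\leq\rho$, and since $P$ divides every $(r+1)\times(r+1)$ minor, $a_1+\cdots+a_{r+1}\geq 1$. Now the key bookkeeping: if some $a_j$ with $j\leq r$ were positive, or if $a_{r+1}\geq 2$, then $a_1+\cdots+a_{r+1}\geq 2$, so $P^2$ would divide every $(r+1)\times(r+1)$ minor; as a nonzero such minor has degree $r+1=2k=\deg(P^2)$, all of them would then be scalar multiples of $P^2$, and Lemma \ref{codim1lem} applied with the degree-$(r+1)$ polynomial $P^2$ would yield — since $P^2$ is not a product of linear forms ($P$ being irreducible of degree $k\geq 2$) — that $E\subset\mathbb{C}^{r+1}\otimes\mathbb{C}^{r+1}$, hence $E=E_{r+1}$, a contradiction. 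So $a_1=\cdots=a_r=0$ and $a_{r+1}=1$; in particular $M$ has generic rank exactly $r$ on $V(P)$ and some $r\times r$ minor of $E$ is not divisible by $P$. Finally, a nonzero top ($\rho\times\rho$) minor has degree $\rho$ and is divisible by $P^{a_1+\cdots+a_\rho}$ of degree $k(a_{r+1}+\cdots+a_\rho)\geq k(\rho-r)$, forcing $\rho-r\leq\rho/k$, i.e.\ $\rho\leq r+2$ when $r\geq 5$ and $\rho\leq r+3$ when $r=3$; together with $\rho\geq r+2$ this pins the invariant factors down to $\mathrm{diag}(1^r,P,\dots,P)$ with $\rho-r$ copies of $P$.

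With this rigidity in hand I would run the cofactor reduction exactly as in Lemma \ref{codim1lem}: after a change of bases fix a nonsingular $(r+1)\times(r+1)$ block (chosen compatibly with the Smith data, so that its own invariant factors over $R_{(P)}$ are $\mathrm{diag}(1^r,P)$ — possible because some $r\times r$ minor avoids $(P)$), write its determinant as $PQ$ (an $(r+1)$-minor, so $\deg Q=k$), and apply the analogue of identity (\ref{cofactor}) to each further column and row. The branch is then: if every extra column and every extra row can, after all, be cleared into the span of the block, the reduction terminates with all but $r+2$ rows zero, or all but $r+2$ columns zero, or at most $r+3$ of each — giving the first three alternatives, kept sharp precisely by the bounds $\rho\leq r+2$ (resp.\ $r+3$) from the previous paragraph; otherwise the bordered-minor identities turn the obstruction into an $(r+1)\times(r+1)$ block all of whose $r\times r$ minors are divisible by $P$, which is the last alternative.

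The main obstacle is exactly this last step. In Lemma \ref{codim1lem} the quotient $\det/P$ was a nonzero scalar, so each extra column was literally a $\mathbb{C}$-linear combination of the block's columns and could be zeroed; here $\det/P=Q$ has degree $k\geq 2$, and the cofactor identity only places $Qv$ — not $v$ itself — in the $R$-span of the block's columns, so whether $v$ can be cleared must be decided by examining the localizations of $R$ at the irreducible factors of $Q$ together with the order to which $\det(\text{block})$ vanishes there. Carrying out this local analysis, and showing it forces precisely the dichotomy between the three ambient-space conclusions and the distinguished-block conclusion, is the technical heart of the proof; the numerical preliminaries above are what make the analysis terminate, since they bound $\rho$ and hence the number of extra rows and columns that need to be controlled.
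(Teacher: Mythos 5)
Your proposal sketches an interesting preliminary via localization at the prime $(P)$, but it does not constitute a proof, and the gaps are structural rather than technical.

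First, the localization at $(P)$ is only available in part (3), where $P$ is assumed irreducible. In parts (1) and (2) the polynomial $P$ is not assumed irreducible — indeed, the intended conclusion there is precisely that $P$ is a product of linear forms — so $R_{(P)}$ is not even well-defined as a DVR. You wave at "the cofactor-reduction strategy used in Lemma~\ref{codim1lem}" for (1) and (2), but a bare cofactor reduction does not suffice: the paper proves (1) and (2) by induction on $r$. The rational row/column clearing produces the block form~(\ref{E'}), and the essential observation $\tilde{y}^{r+2}_{r+2}=\Delta_{r+2}/\Delta_{r+1}=T/Q$ with $T=\Delta_{r+2}/P$ is what drives the case split: either some $r\times r$ minor of the cleared block avoids $P$, forcing $P^2\mid\Delta_{r+2}$ and letting one invoke Lemma~\ref{codim1lem} (with a degree-count contradiction in case (1), or the containment in $\mathbb{C}^{r+2}\otimes\mathbb{C}^{r+2}$ in case (2)); or all such $r\times r$ minors are multiples of $P$, in which case the statement (1) with $r$ replaced by $r-1$ applies. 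This recursion is the backbone, and part (3) then relies on (1) and (2) at both $r-1$, $r-2$ and $r+1$. Without an argument for (1) and (2), your proof of (3) cannot close.

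Second, even within part (3), the DVR argument produces a bound on the \emph{generic rank} $\rho$ of $E$, not on the ambient dimensions $\mathbf{a},\mathbf{b}$. Knowing $\rho\leq r+2$ says that $E$ has bounded rank $r+2$, which is far weaker than $E\subset\mathbb{C}^{r+2}\otimes\mathbb{C}^{\bbb}$: compression and primitive spaces of bounded rank can be concise in arbitrarily large ambient spaces. To pass from the rank bound to the row/column count in the lemma's conclusion, one must actually perform the clearing and show only $r+2$ (resp.\ $r+3$) rows survive, which is exactly the step you flag as unresolved. You also assert without argument that a nonsingular $(r+1)\times(r+1)$ block can be chosen whose own invariant factors over $R_{(P)}$ are $\mathrm{diag}(1^r,P)$: the fact that \emph{some} $r\times r$ minor of the full matrix is a unit mod $P$ does not by itself produce such a block simultaneously with a nonzero bordering $(r+1)$-minor, and if it did in general it would make the lemma's final alternative vacuous, which it is not. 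In short: the numerical preliminary is sound and could be a useful check, but it neither replaces the induction underlying (1) and (2) nor closes the clearing argument whose difficulty you yourself identify.
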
 

\begin{proof}
(1) and (2): Proof by induction on $r$. The base case $r=1$ is trivial. Assume $r>1$ and assume that (1) and (2) holds for all integers smaller than $r$.

Given any nonzero $(r+2)\times(r+2)$ minor of $E$, by changes of bases we can assume it is $\Delta_{r+2}$, and we further assume $\Delta_{r+1}\neq 0$.

Write $\Delta_{r+1}=:PQ$ and for $j\leq r+1$, $\Delta^{I}_{I\backslash j, r+2}=:PQ_j$, where each of the polynomials $Q$ and $Q_j$'s either is zero or has degree $r+1-k$. Then similar to Lemma \ref{codim1lem}, we have
$$
(-1)^{r+1}\begin{pmatrix}
-Q_1\\
\vdots\\
(-1)^{r+1} Q_{r+1}
\end{pmatrix}P=
\begin{pmatrix}
(-1)^{i+j}\Delta^{I\backslash i}_{I\backslash j}
\end{pmatrix}_{j,i=1}^{r+1}
\begin{pmatrix}
y^1_{r+2}\\
\vdots\\
y^{r+1}_{r+2}
\end{pmatrix}
.$$
Using the cofactor matrix, we obtain:
$$\frac{(-1)^{r+1}}{Q}
\begin{pmatrix}
y^1_1&\cdots&y^1_{r+1}\\
\vdots & &\vdots\\
y^{r+1}_1&\cdots&y^{r+1}_{r+1}
\end{pmatrix}
\begin{pmatrix}
-Q_1\\
\vdots\\
(-1)^{r+1} Q_{r+1}
\end{pmatrix}=
\begin{pmatrix}
y^1_{r+2}\\
\vdots\\
y^{r+1}_{r+2}
\end{pmatrix}.$$

By adding a rational combination (where the coefficients are $(-1)^{j}Q_j/Q$'s) of the first $r+1$ columns to the $(r+2)$-th column, we can put the first $r+1$ entries of the $(r+2)$-th column zero. By the same argument, put the first $r+1$ entries of the last $\bbb-r-1$ columns zero. And we can do the similar rational row operations to eliminate first $r+1$ entries of the last $\aaa-r-1$ rows. Then $E$ becomes $E'$ of the form (\ref{E'}). 

Since the $(1,\cdots,r+1,r+2)\times(1,\cdots,r+1,r+2)$ minor is not changed by adding rational multiples of the first $r+1$ rows and columns to the $(r+2)$-th row and $(r+2)$-th column respectively, $\Tilde{y}^{r+2}_{r+2}=\frac{\Delta_{r+2}}{\Delta_{r+1}}$. On the other hand, $\Tilde{y}^{r+2}_{r+2}$ has the form $T/Q$ for some polynomial $T$ of degree $k+1$ if not zero, because all coefficients appearing in the row and column operations above are $(-1)^{j}Q_j/Q$'s. Thus,
\begin{equation}\label{ytilde}
    \frac{T}{Q}=\Tilde{y}^{r+2}_{r+2}=\frac{\Delta_{r+2}}{\Delta_{r+1}}=\frac{\Delta_{r+2}}{PQ}=\frac{(\Delta_{r+2}/P)}{Q}
\end{equation}
and $T=\Delta_{r+2}/P$.

Since $P$ and $Q$ are coprime, the fact $P$ divides all $(r+1)\times(r+1)$ minors is preserved after performing the above rational row and column operations. 

If there exists an $r\times r$ minor of the upper left $(r+1)\times(r+1)$ block that is not a multiple of $P$, by changes of bases assume this minor is $\Delta_r$. $P$ divides the minor $\Delta^{1\cdots r,r+2}_{1\cdots r,r+2}=\Tilde{y}^{r+2}_{r+2}\Delta_r=T\Delta_r/Q$, so $T$ is a multiple of $P$. Hence $P^2$ divides $\Delta_{r+2}=TP$. If $k>r/2+1$, $P^2$ has degree $>r+2$, then we must have $\Delta_{r+2}=0$, contradicting to the assumption $\Delta_{r+2}\neq 0$. If $r$ is even and $k=r/2+1$, $\Delta_{r+2}$ is a multiple of $P^2$. By the arbitrariness of the choice of the nonzero $(r+2)\times(r+2)$ minor of $E$, all $(r+2)\times(r+2)$ minors equal to $P^2$ up to scale. By Lemma \ref{codim1lem}, $P$ factors completely or $E\subset\mathbb{C}^{r+2}\otimes\mathbb{C}^{r+2}$. 

If all $r\times r$ minors of the upper left $(r+1)\times(r+1)$ block are multiples of $P$. By induction, apply (1) by replacing $r$ with $r-1$ so $P$ factors into a product of linear forms.

(3): Similar to above let $\Delta_{r+1}=:PQ$ and $\Delta_{r+2}$ are nonzero. Since $P$ is irreducible of degree $k=(r+1)/2$, either $P$ and $Q$ are coprime, or $Q$ equals to $P$ up to scale. In the latter case, we can choose another nonzero $(r+1)\times (r+1)$ minor from the top left $(r+2)\times(r+2)$ block such that $P$ and $Q$ are coprime, unless all $(r+1)\times (r+1)$ minors in the top left $(r+2)\times(r+2)$ block are multiples of $P^2$.

If all $(r+1)\times (r+1)$ minors in the top left $(r+2)\times(r+2)$ block are multiples of $P^2$, applying Lemma \ref{codim1lem} to the top left $(r+2)\times(r+2)$ block we can put $E$ as
$$E'=\begin{pmatrix}
y^1_1 &\cdots&y^1_{r+1} & 0 & y^1_{r+3}& \cdots\\
\vdots &     &\vdots & \vdots & \vdots &  \\
y^{r+1}_1 &\cdots&y^{r+1}_{r+1} & 0 & y^{r+1}_{r+3}& \cdots\\
0 &\cdots&0 & y^{r+2}_{r+2} & y^{r+2}_{r+3}& \cdots\\
y^{r+3}_1&\cdots&y^{r+3}_{r+1} & y^{r+3}_{r+2} & y^{r+3}_{r+3}& \cdots\\
\vdots &     &\vdots & \vdots & \vdots &  \\
\end{pmatrix}.$$

Consider the $(r+1)\times (r+1)$ minors involving $y^{r+2}_{r+2}$ and $r\times r$ minors of the upper left $(r+1)\times (r+1)$ block: $P$ dividing all $(r+1)\times (r+1)$ minors implies $P$ dividing all $r\times r$ minors from the first $r+1$ rows. Apply (2) by replacing $r$ with $r-1$ to the submatrix consisting of the first $r+1$ rows. Since $P$ is irreducible of degree $k>1$, this submatrix is in some $\mathbb{C}^{r+1}\otimes \mathbb{C}^{r+1}$, we can put $y^i_j$ zero for $i\leq r+1$ and $j\geq r+3$ by changing basis of $A$. For the same reason all $y^j_i$ for $i\leq r+1$ and $j\geq r+3$ can be put zero too. Then $E'$ becomes $E''=\mathrm{diag}(B_1,B_2)$ where $B_1$ is a $(r+1)\times (r+1)$ block. If $B_2$ has an nonzero $2\times 2$ minor, consider the $(r+1)\times (r+1)$ minors consisting of it and any $(r-1)\times(r-1)$ minor of $B_1$, applying (1) replacing $r$ with $r-2$ we see $P$ factors into linear forms which contradicts the irreducibility. Therefore $B_2$ has bounded rank 1, then $E\subset \mathbb{C}^{r+2}\otimes \mathbb{C}^{\bbb}$ or $\mathbb{C}^{\aaa}\otimes \mathbb{C}^{r+2}$.

Now assume $P$ and $Q$ are coprime. Similar to the proof above, if there exists an $r\times r$ minor of the upper left $(r+1)\times(r+1)$ block that is not a multiple of $P$, $P^2$ divides $\Delta_{r+2}$. By the arbitrariness of choice of nonzero $(r+2)\times(r+2)$ minors, $P^2$ divides all $(r+2)\times(r+2)$ minors. As $k=(r+1)/2$ and $P$ is irreducible, $P^2$ divides all $(r+2)\times(r+2)$ minors and we can apply (1) by replacing $r$ with $r+1$, then we conclude $E\subset\mathbb{C}^{r+3}\otimes\mathbb{C}^{r+3}$.

Otherwise all $r\times r$ minors of the upper left $(r+1)\times(r+1)$ block are multiples of $P$.
\end{proof}

\begin{cor}\label{codim1cor}
Let $E\subset\mathbb{C}^{\aaa}\otimes\mathbb{C}^{\bbb}$, $1\leq r\leq\min\{\aaa,\bbb\}-2$, and $\mathrm{codim}(E_r)=1$ and $E\neq E_{r+1}$. Then:
\begin{enumerate}
    \item $E_r$ does not contain any irreducible hypersurface of degree $k>r/2+1$;

    \item if $r$ is even and $E_r$ contains an irreducible hypersurface of degree $r/2+1$, then $E\subset \mathbb{C}^{r+2}\otimes \mathbb{C}^{r+2}$.
\end{enumerate}
\end{cor}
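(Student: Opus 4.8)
The plan is to derive both statements directly from Lemma \ref{codim1lem2} by extracting, from a hypothetical irreducible hypersurface component of $E_r$, a polynomial $P$ of the appropriate degree that divides all $(r+1)\times(r+1)$ minors and is coprime to the complementary cofactor. First I would set up the dictionary: since $\mathrm{codim}(E_r)=1$, the variety $E_r\subset E$ (equivalently its projectivization in $\mathbb{P}E$) is a hypersurface, so it has a well-defined radical ideal generated by a single squarefree polynomial $F$, and every $(r+1)\times(r+1)$ minor of $E$ vanishes on $E_r$, hence is divisible by $F$. If $E_r$ contains an irreducible hypersurface component cut out by an irreducible polynomial $P$, then $P\mid F$ and therefore $P$ divides all $(r+1)\times(r+1)$ minors of $E$.

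For statement (1), suppose toward a contradiction that $\deg P=k>r/2+1$. I want to invoke Lemma \ref{codim1lem2}(1), which requires that for every nonzero $(r+1)\times(r+1)$ minor $\Delta$, the polynomials $P$ and $\Delta/P$ be coprime. Here the key observation is that $\deg(\Delta/P)=(r+1)-k<r+1-(r/2+1)=r/2-1<k=\deg P$; since $P$ is irreducible and strictly exceeds $\Delta/P$ in degree, $P$ cannot divide $\Delta/P$, so they are automatically coprime. (One must also note $E\neq E_{r+1}$ is part of the hypotheses, which is exactly the running assumption of the corollary, and $1\le r\le\min\{\aaa,\bbb\}-2$ is inherited.) Thus Lemma \ref{codim1lem2}(1) applies and forces $P$ to be a product of linear forms, contradicting irreducibility together with $k>1$ (note $k>r/2+1\geq 3/2$ so $k\geq 2$). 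Hence no such component exists.

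For statement (2), assume $r$ is even and $P$ is irreducible of degree exactly $k=r/2+1$. The same degree count gives $\deg(\Delta/P)=(r+1)-(r/2+1)=r/2<k$ whenever $\Delta/P\neq 0$, so again $P$ and $\Delta/P$ are coprime by the irreducibility-and-degree argument. Now Lemma \ref{codim1lem2}(2) applies and yields that either $P$ is a product of linear forms — impossible since $P$ is irreducible of degree $k=r/2+1\ge 2$ — or $E\subset\mathbb{C}^{r+2}\otimes\mathbb{C}^{r+2}$, which is the desired conclusion.

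The only genuinely delicate point — and the place I expect to have to be careful — is the passage from "$E_r$ contains an irreducible hypersurface" to "there is an irreducible polynomial $P$ of that degree dividing every $(r+1)\times(r+1)$ minor with $P$ coprime to each complementary cofactor." The divisibility of the minors by $P$ is immediate from $P\mid F$ and $\sqrt{I(E_r)}=(F)$, but one should make explicit that "irreducible hypersurface of degree $k$'' means its ideal is generated by an irreducible degree-$k$ form, so the degree of $P$ is exactly $k$; the coprimality with $\Delta/P$ is then not an extra hypothesis to verify case by case but a free consequence of the degree inequality, which is the small trick that makes the corollary fall out cleanly. Everything else is a direct citation of the two clauses of Lemma \ref{codim1lem2}.
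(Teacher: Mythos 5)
Your proof is correct and follows the same route as the paper: extract an irreducible degree-$k$ form $P$ from the hypersurface component, observe that $\deg(\Delta/P)<\deg P$ forces coprimality for free, and invoke clauses (1) and (2) of Lemma \ref{codim1lem2} respectively. One small arithmetic slip: you write $(r+1)-(r/2+1)=r/2-1$; the correct value is $r/2$, but since $r/2<k$ the conclusion is unchanged.
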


\begin{proof}
1. If $E_r$ contains an irreducible hypersurface of degree $k$, there exists an irreducible polynomial $P$ of degree $k$ dividing all $(r+1)\times(r+1)$ minors. Then for any $(r+1)\times(r+1)$ minor $\Delta$, $\Delta/P$ has degree less than $k$ so must be coprime with $P$. By (1) of Lemma \ref{codim1lem2}, $P$ factors, contradicting to the irreducibility.

2. Similar to the proof of 1 except we apply (2) of Lemma \ref{codim1lem2}. As $P$ cannot be a product of linear forms due to irreducibility, we conclude $E\subset\mathbb{C}^{r+2}\otimes \mathbb{C}^{r+2}$.
\end{proof}

\subsection{Case $\mathrm{codim}(E_1)= n$}\label{E1section}
Let $E^{\perp}:=\{f\in A^*\ot B^*\mid f(E)=0\}$. Define the \textbf{index of degeneracy} of $E$ to be one plus the maximum dimension of a linear space contained in $\mathbb{P}E^{\perp}\cap\mathrm{Seg}(\mathbb{P}A^* \times \mathbb{P}B^*)$, denoted as $\kappa$. Equivalently, $\kappa$ is the largest number of entries in the same row or column of $E$ that can be simultaneously put to zero by changing bases of $A$ and $B$. 

The subspace $E$ is called \textbf{E1-generic} if $\kappa=0$. We call this property E1-generic because it corresponds to the notion of 1-generic for spaces of matrices given by Eisenbud \cite{Eis88}, which differs with the notion of 1-generic that is often used for tensors (cf. \cite{1generic}). We list two results of E1-generic spaces of our interest below.

\begin{thm}[Corollary 3.3 and Theorem 2.1 of \cite{Eis88}]\label{1genthm}
Let $m=\min\{\aaa,\bbb\}$. If $E\subset A\otimes B$ is E1-generic, then:
\begin{enumerate}
    \item for $k\leq m-1$, $\mathrm{codim}(E_k)\geq \aaa+\bbb-2k-1$;
    \item if $F\subset E$ is a subspace with $\mathrm{codim}(F)\leq m-1$, then $\mathrm{codim}_F (F_{m-1})=(\aaa-m+1)(\bbb-m+1)$.
\end{enumerate}
\end{thm}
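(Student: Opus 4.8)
The plan is to reduce both parts to one structural fact about E1-generic spaces together with a tangent-space computation on determinantal varieties. First I would unwind the hypothesis $\kappa=0$: since $E^{\perp}$ then contains no nonzero rank-one matrix, for every $0\neq v\in\mathbb{C}^{\bbb}$ the set $\{Xv\mid X\in E\}\subseteq\mathbb{C}^{\aaa}$ lies in no hyperplane, i.e.\ equals $\mathbb{C}^{\aaa}$, and dually $\{u^{t}X\mid X\in E\}=(\mathbb{C}^{\bbb})^{*}$ for every $0\neq u\in\mathbb{C}^{\aaa}$. From this I would extract the key lemma: \emph{an E1-generic subspace $S\subseteq\mathbb{C}^{p}\otimes\mathbb{C}^{q}$ has $\dim S\geq p+q-1$.} The proof is a dimension count in $\mathbb{P}^{pq-1}$: if $\dim S\leq p+q-2$ then $\dim\mathbb{P}S^{\perp}\geq(p-1)(q-1)$, and since $\mathrm{Seg}(\mathbb{P}^{p-1}\times\mathbb{P}^{q-1})$ has dimension $p+q-2$, the two subvarieties have dimensions summing to at least $pq-1$, hence meet, producing a nonzero rank-one element of $S^{\perp}$ — contradicting E1-genericity.

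For (1), I would take a component $Z$ of $E_{k}$ of minimal codimension and let $j\leq k$ be the rank of a general point of $Z$; then $Z\subseteq E_{j}$, so $Z$ lies in a component $Z'$ of $E_{j}$ whose general point $X$ has rank exactly $j$ — in particular a smooth point of $\sigma_{j}$. Writing $K:=\ker X$ and $I:=\mathrm{im}\,X$, one has $T_{X}Z'\subseteq E\cap T_{X}\sigma_{j}=E\cap\{Y\mid Y(K)\subseteq I\}=\ker\Psi$, where $\Psi\colon E\to\mathrm{Hom}(K,\mathbb{C}^{\aaa}/I)$ sends $Y$ to the composite $K\hookrightarrow\mathbb{C}^{\bbb}\xrightarrow{Y}\mathbb{C}^{\aaa}\twoheadrightarrow\mathbb{C}^{\aaa}/I$. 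The two consequences of E1-genericity above force $\mathrm{im}\,\Psi$ to be an E1-generic subspace of $\mathrm{Hom}(K,\mathbb{C}^{\aaa}/I)$, a space of dimension $(\aaa-j)(\bbb-j)$ with both factors positive because $j\leq m-1$; so the key lemma gives $\dim\mathrm{im}\,\Psi\geq(\aaa-j)+(\bbb-j)-1$, and therefore $\mathrm{codim}(E_{k})=\mathrm{codim}(Z)\geq\mathrm{codim}(Z')\geq\dim\mathrm{im}\,\Psi\geq\aaa+\bbb-2j-1\geq\aaa+\bbb-2k-1$.

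For (2), the bound $\mathrm{codim}_{F}(F_{m-1})\leq(\aaa-m+1)(\bbb-m+1)$ is automatic, since $F_{m-1}=F\cap H_{m-1}$ is a linear section of the generic determinantal variety. For the reverse I would induct on $c=\mathrm{codim}_{E}(F)$: the case $c=0$ is (1) with $k=m-1$, where the bound equals $(\aaa-m+1)(\bbb-m+1)$ precisely because $m\in\{\aaa,\bbb\}$, so together with the easy inequality it becomes an equality. For the inductive step I would write $F$ as a hyperplane of a codimension-$(c-1)$ subspace $F'$, invoke the inductive hypothesis for $F'$, and observe that the hyperplane section $F_{m-1}=F'_{m-1}\cap F$ drops the dimension of $F'_{m-1}$ by exactly one — so the expected codimension is preserved — provided no top-dimensional component of $F'_{m-1}$ is contained in a hyperplane of $F'$.

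I expect the main obstacle to be precisely that last nondegeneracy requirement in (2): the expected codimension must survive passing to \emph{every} linear subspace of codimension $\leq m-1$, not just a general one, so one genuinely needs that the determinantal schemes attached to $1$-generic matrices — and to their linear sections of small codimension — are Cohen–Macaulay of the expected codimension with no component lying in a hyperplane. This is exactly what the Eagon–Northcott resolution argument of \cite{Eis88} supplies, and it is the ingredient I would cite rather than reprove; part (1), by contrast, is self-contained through the intersection-dimension argument above.
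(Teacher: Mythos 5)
This is a statement the paper cites from \cite{Eis88} (Corollary~3.3 and Theorem~2.1) without reproducing a proof, so there is no in-paper argument to compare against; I will assess the proposal on its own terms.

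Your argument for part~(1) is correct and genuinely self-contained. The auxiliary lemma (an E1-generic $S\subseteq\mathbb{C}^p\otimes\mathbb{C}^q$ has $\dim S\geq p+q-1$) is proved cleanly by the projective dimension theorem applied to $\mathbb{P}S^\perp$ and the Segre variety, and the verification that $\mathrm{im}\,\Psi$ is again E1-generic is exactly right: a rank-one functional $u^*\otimes v$ annihilating $\mathrm{im}\,\Psi$, with $v\in K$ and $u^*$ a lift to $A^*$ vanishing on $I$, would be a nonzero rank-one element of $E^\perp$. The chain of inequalities $\mathrm{codim}(E_k)=\mathrm{codim}(Z)\geq\mathrm{codim}(Z')\geq\dim\mathrm{im}\,\Psi\geq \aaa+\bbb-2j-1$ is legitimate once one takes $X$ to be simultaneously a smooth point of $Z'$ and a rank-exactly-$j$ point, which is a generic intersection of conditions; and the final inequality $-2j\geq -2k$ is the right direction.

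For part~(2) your observation that the base case $c=0$ is exactly part~(1) at $k=m-1$, because $m\in\{\aaa,\bbb\}$ makes $(\aaa-m+1)+(\bbb-m+1)-1=(\aaa-m+1)(\bbb-m+1)$, is a nice and correct reduction. But the inductive step is where the real content lives, and your description of what closes it is slightly off. Cohen--Macaulayness of $F'_{m-1}$ of the expected codimension guarantees that its associated primes are minimal (no embedded components), which reduces the needed regular-sequence condition to ``no component lies in the cutting hyperplane''; it does \emph{not} by itself supply that linear nondegeneracy. Nor does a naive extension of your tangent-space bound fix it: restricting $\Psi$ to $F$ can drop $\dim\mathrm{im}(\Psi|_F)$ by as much as $\mathrm{codim}_E(F)$, which gives only $\mathrm{codim}_F(F_{m-1})\geq N-c$, short of the target $N$. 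So the ``provided no top-dimensional component of $F'_{m-1}$ is contained in a hyperplane'' clause is essentially the entire strength of Eisenbud's Theorem~2.1 for linear sections, and the induction is more a repackaging of that theorem than a reduction to something smaller. Since the paper itself only cites \cite{Eis88}, leaning on the citation is acceptable, but you should phrase it as citing the depth/grade statement for linear sections directly rather than presenting the Eagon--Northcott resolution as yielding the nondegeneracy as a side effect of CM-ness.
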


For generic determinantal varieties, i.e. when $E=A\ot B$, one expects $E_k$ has codimension $(\aaa-k)(\bbb-k)$. E1-generic does not means generic but implies the genericity to some extent -- $E_{m-1}$ has the expected codimension, and the codimension of $E_k$ has a lower bound $\aaa+\bbb-2k-1$.

\begin{prop}\label{segreprop}
Let $n:=\mathrm{codim}(E_1)$, then there exist $0\leq j\leq n$ and a linear subspace $F\subset E$ of codimension $j$, such that either $F\subset \mathbb{C}^{k}\otimes\mathbb{C}^{l}$ for some $k+l\leq n+3-j$ and $k,l\geq 2$, or $j=n$ and $F$ has bounded rank 1.
\end{prop}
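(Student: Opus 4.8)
The plan is to induct on $n = \mathrm{codim}(E_1)$, using the index of degeneracy $\kappa$ as the organizing device. If $E_1 = E$, then $E$ has bounded rank $1$ and we may take $F = E$, $j = 0$; so assume $E_1 \neq E$ and $n \geq 1$. The dichotomy I would set up is: either $E$ is E1-generic ($\kappa = 0$), or it is not ($\kappa \geq 1$). In the E1-generic case, part (1) of Theorem \ref{1genthm} gives $\mathrm{codim}(E_1) \geq \aaa + \bbb - 3$, i.e. $\aaa + \bbb \leq n + 3$; since $E$ is concise (conciseness being the natural running hypothesis here — if $E$ were contained in a smaller $\mathbb{C}^{k}\otimes\mathbb{C}^{l}$ we would pass to it first), we get $E \subset \mathbb{C}^{\aaa}\otimes\mathbb{C}^{\bbb}$ with $\aaa + \bbb \leq n+3$, and we take $F = E$, $j = 0$, provided $\aaa,\bbb \geq 2$; if say $\bbb = 1$ then $E$ itself has bounded rank $1$ and we are in the $j = n$ alternative (here one checks $n = \aaa - 1$ matches the codimension bookkeeping).

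When $\kappa \geq 1$, by definition of the index of degeneracy there is a change of basis in $A$ and $B$ after which $E$ has, say, its first row of the form $(0, \dots, 0, y^{1}_{\kappa+1}, \dots, y^{1}_{\bbb})$ — that is, the top-left $1 \times \kappa$ block is identically zero. Deleting that first row produces a subspace $E' \subset \mathbb{C}^{\aaa - 1}\otimes\mathbb{C}^{\bbb}$, and the key computation is to relate $\mathrm{codim}(E'_1)$ to $\mathrm{codim}(E_1) = n$: a rank-one specialization of $E$ that keeps the first row zero is exactly a rank-one specialization of $E'$, and the first row contributes at most one extra linear condition beyond what constrains $E'$, so one expects $\mathrm{codim}(E'_1) \geq n - 1$. (More carefully: the locus where $E$ has rank $\leq 1$ is cut, over the open set where $E'$ has rank $1$, by the condition that the first row be proportional to the single nonzero row of $E'$, which is at most one codimension; on the locus where $E'$ has rank $0$ things only get smaller. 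This needs a short but honest argument.) Then apply the inductive hypothesis to $E'$ with parameter $n - 1$: we get $0 \leq j' \leq n - 1$ and $F' \subset E'$ of codimension $j'$ in $E'$ with $F' \subset \mathbb{C}^{k}\otimes\mathbb{C}^{l}$, $k + l \leq (n-1) + 3 - j' = n + 2 - j'$, $k, l \geq 2$ (or $j' = n-1$ and $F'$ of bounded rank $1$). Now $F'$, viewed back inside $E$, sits in $\mathbb{C}^{k+1}\otimes\mathbb{C}^{l}$ (we re-added the first row, but we can intersect with the hyperplane making $y^{1}_{\kappa+1}$ vanish, or just keep the row), and lifting to a subspace $F \subset E$ of codimension $j = j'$ or $j' + 1$ lands us in $\mathbb{C}^{k'}\otimes\mathbb{C}^{l'}$ with $k' + l' \leq n + 3 - j$. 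The bookkeeping is the delicate part: one must choose at each step whether to absorb a codimension and shrink the ambient space, keeping the inequality $k + l \leq n + 3 - j$ tight, and handle the degenerate cases where a side-dimension would drop to $1$ by switching into the bounded-rank-$1$ alternative.

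The main obstacle I anticipate is precisely this codimension accounting across the inductive step: showing that removing a zero-padded row drops $\mathrm{codim}(E_1)$ by exactly $1$ (not more, which would weaken the conclusion, and being careful it can drop by $1$ and not $0$, which would prevent termination) and that re-inserting the row costs at most one unit of codimension in $F$ while enlarging one ambient factor by exactly $1$. A secondary subtlety is the interaction with conciseness — if $E'$ fails to be concise one reduces the ambient dimensions further, which only helps, but must be tracked so the final $k, l$ genuinely satisfy $k, l \geq 2$ or we correctly fall into $j = n$. I would also double-check the base cases $n = 0$ (then $E_1 = E$, bounded rank $1$, consistent with the $j = 0$, $\mathbb{C}^{k}\otimes\mathbb{C}^{l}$ with $k+l \leq 3$ reading only if $E$ is a line, i.e. rank $\leq 1$, matching the $j = n = 0$ clause) and $n = 1$, where Corollary \ref{codim1cor}-type reasoning or a direct matrix normal form should pin down $E$ explicitly.
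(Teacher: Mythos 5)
Your approach diverges from the paper's in a structurally important way, and the divergence is precisely where the unresolved gap lies. The paper never deletes a row of the matrix; instead it induces on $\dim E$ by cutting the parameter space $E$ itself. Its key observation is the dichotomy on $2\times 2$ minors: if every nonzero $2\times 2$ minor is irreducible, then any zero entry forces its entire row or column to vanish, so after discarding zero rows and columns the remaining $k\times l$ block is E1-generic and Theorem~\ref{1genthm} gives $k+l\leq n+3$ directly. If instead some $2\times 2$ minor factors as $\ell_1\ell_2$, then $E_1 = F_1\cup F_1'$ where $F=\{\ell_1=0\}$, $F'=\{\ell_2=0\}$ are hyperplanes of $E$, and at least one component, say $F_1$, still has codimension $n$ in $E$, hence codimension $\leq n-1$ in $F$. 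One then induces on $\dim E$. This is clean because the recursion cuts the space $E$ in which the codimension $n$ is measured.

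Your proposal instead truncates a row, producing $E'\subset\mathbb{C}^{\aaa-1}\otimes\mathbb{C}^{\bbb}$ and inducing on $n$. The central claim needed — that deleting a zero-padded row drops $\mathrm{codim}(E_1)$ by exactly $1$ — is not established, and in fact the inequality you wrote points the wrong way for the induction. What one can easily show is $\mathrm{codim}_{E'}(E'_1)\leq n$: the projection $\pi\colon E\to E'$ sends $E_1$ into $E'_1$, giving $E_1\subset\pi^{-1}(E'_1)$ and hence $n\geq\mathrm{codim}_{E'}(E'_1)$. But you need the strict inequality $\mathrm{codim}_{E'}(E'_1)\leq n-1$ for the induction on $n$ to terminate, and there is no a priori reason for strictness; the codimension may stay at $n$. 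Moreover, $E'$ is a projection of $E$, not a subspace, so the subspace $F'\subset E'$ produced by the inductive hypothesis does not naturally sit inside $E$, and the lifting step (choosing $F\subset E$ with $\pi(F)=F'$ or similar) incurs unquantified codimension costs when $\pi|_E$ is not injective. The paper avoids all of this by always working with genuine hyperplanes $F\subset E$, for which the codimension accounting $\mathrm{codim}_F(F_1)=\mathrm{codim}_E(F_1)-1\leq n-1$ is immediate. I would recommend abandoning the row-truncation idea and adopting the $2\times2$-minor factorization as the engine of the recursion.
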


\begin{proof}
First assume all nonzero $2\times 2$ minor of $E$ are irreducible. So if there is an entry $y^i_j=0$, then either all entries in the $i$-th row or all entries in the $j$-th column are zero. By changes of bases in $A$ and $B$, there exist integers $k,l\geq2$, such that $y^i_j\neq 0$ if and only if $i\leq k, j\leq l$. 
$$E=\begin{pmatrix}
y^1_1 & \cdots & y^1_l & 0 & \cdots & 0\\
\vdots & & \vdots & \vdots & &\vdots\\
y^k_1 & \cdots & y^k_l & 0 & \cdots & 0\\
0 & \cdots & 0 & 0 & \cdots & 0\\
\vdots & & \vdots & \vdots & &\vdots\\
0 & \cdots & 0 & 0 & \cdots & 0\\
\end{pmatrix}.$$

Then the upper left $k\times l$ block of $E$ is E1-generic. By Theorem \ref{1genthm} if $k,l\geq 2$, $\mathrm{codim}(E_1)\geq k+l-3$, so $k+l\leq n+3$.

If there is a $2\times 2$ minor of $M$ that factors into the product of two linear forms $\ell_1,\ell_2$, write $F:=\{\ell_1=0\}$ and $F':=\{\ell_2=0\}$, then $E_1=F_1\cup F'_1$. At least one of the two components has codimension $n$ in $E$. Say it is $F_1$, then $\mathrm{codim}_F (F_1)\leq n-1$. 

Together with the irreducible case, we conclude that at least one of the following holds:
\begin{enumerate}
    \item there exists a hyperplane $F\subset E$ such that $\mathrm{codim}_F (F_1)=n-1$;
    \item $E\subset \mathbb{C}^{k}\otimes\mathbb{C}^{l}$ such that $k+l\leq n+3$ and $k,l\geq 2$.
\end{enumerate}

Using induction on $\mathrm{dim}(E)$, we conclude.
\end{proof}

\subsection{Case $\mathrm{codim}(E_2)=1$}\label{E2codim1Section}
If $\mathrm{codim}(E_2)=1$, then there must exist an irreducible polynomial $P$ of degree $k\leq 3$ dividing all $3\times 3$ minors of $E$. If $k=1$, then $E$ contains a hyperplane $\{P=0\}$ which has bounded rank 2. If $k=3$, by Lemma \ref{codim1lem} $E\subset \mathbb{C}^3\otimes\mathbb{C}^3$.  

When $k=2$, by Corollary \ref{codim1cor} we have $E\subset \mathbb{C}^4\otimes\mathbb{C}^4$, which suffices us to assume $E\subset A\ot B$ with $\mathrm{dim}(A)=\mathrm{dim}(b)=4$. The following proposition finds all such spaces up to changes of bases in $E,A$ and $B$. 

\begin{prop}\label{E2codim1prop}
Let $E\subset A\otimes B:=\BC^4\ot\BC^4$. If there exists an irreducible polynomial $S$ of degree 2 dividing all $3\times 3$ minors of $E$, then at least one of the following holds:
\begin{enumerate}
    \item $E$ has bounded rank 3;
    \item up to changes of bases in $E,A$ and $B$, $E$ is either skew-symmetric, or has the form a diagonal block matrix $\mathrm{diag}(X,X)$ where
    $$X=\begin{pmatrix}
    x_1 & x_2\\
    x_2 & x_3
    \end{pmatrix}\text{ or }
    \begin{pmatrix}
    x_1 & x_2\\
    x_3 & x_4
    \end{pmatrix}$$
    depending on the rank of $S$.
\end{enumerate}
\end{prop}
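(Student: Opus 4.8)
The plan is to analyze the structure imposed by an irreducible quadric $S$ dividing all $3\times 3$ minors of $E \subset \BC^4 \ot \BC^4$, splitting into cases according to $\mathrm{rank}(S) \in \{3,4\}$ (the cases $\mathrm{rank}(S)\le 2$ being excluded by irreducibility). First I would invoke Lemma~\ref{codim1lem2}(3) with $r=3$: since $S$ is irreducible of degree $k=2=(r+1)/2$, the lemma tells us that either $E$ sits inside $\BC^5\ot\BC^4$, $\BC^4\ot\BC^5$, or $\BC^6\ot\BC^6$ — all impossible here since $\dim A=\dim B=4$ — or, up to changes of bases, $E$ has a nonsingular $4\times 4$ block (i.e.\ $E_3 = E$ is excluded and there is a $4\times 4$ minor $\Delta_4 \ne 0$) with all $3\times 3$ minors of that block multiples of $S$. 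If instead $E$ has bounded rank $3$ we are in case (1) and done, so assume $\Delta_4 \neq 0$.

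The core of the argument is then local analysis of the generic determinantal structure. Because $S$ is irreducible and divides every $3\times 3$ minor while $\Delta_4\not\equiv 0$, a general point of $E$ is invertible, and on the quadric hypersurface $\{S=0\}$ the matrix drops rank to exactly $2$ (not lower, generically, else a linear form would divide $S$). So on a dense open subset of $\{S=0\}$ the kernel and cokernel of $E$ are lines, giving rational sections $\BP^1$-worth of data: morphisms from (a dense open of) $\{S=0\}\subset\BP^3$ to $\BP A$ and $\BP B$ sending a point to the kernel line and cokernel line. I would study these maps: the adjugate (cofactor) matrix $\mathrm{adj}(E)$ has entries the $3\times3$ minors, all divisible by $S$, so $\mathrm{adj}(E) = S \cdot N$ for a matrix $N$ of linear forms (here I use $\deg$ bookkeeping: $3\times3$ minors have degree $3$, $S$ has degree $2$, so $N$ has linear entries — and $N$ has generic rank $1$ since $\mathrm{adj}$ does on $\{S=0\}$). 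The identity $E\cdot\mathrm{adj}(E) = \Delta_4 \cdot I$ becomes $E\cdot N = (\Delta_4/S)\cdot I$; since $N$ has rank $1$ generically, write $N = v\, w^t$ up to rational factors with $v \in B\ot(\cdot)$, $w\in A^*\ot(\cdot)$, and track degrees. This forces $\Delta_4/S$ to be a quadric and pins down $v,w$ as linear (vectors of linear forms), so $E\cdot v w^t = Q\, I$ with $Q$ a quadric. This is the key structural equation.

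From $E \cdot v\, w^t = Q\, I$ I would extract that $Ev$ is a vector of quadrics proportional to $w$-entries times... — more precisely $v$ spans the generic kernel and $w^t$ the generic cokernel, and $E v = 0$ on $\{Q=0\}$ with $Q$ a scalar quadric multiple of $S$, hence $Q = cS$. Thinking of $v = v(x)$ as a linear map $\BC^4 \to B$, i.e.\ $v \in B\ot A^*$ (after identifying the parameter space of $E$ with $A^*$ via the basis), and similarly $w\in A\ot A^*$, one gets a commuting-type relation $E(x)\, v(x) = S(x)\, (\text{fixed invertible } B\to A \text{ ?})$... — the clean way is: the equation says the pencil $E$ together with the single matrix $v$ generates a $2$-dimensional (or small) algebra, and classifying $2\times 2$ "square-zero up to scalar" type configurations in $\BC^{4\times 4}$ yields exactly the normal forms $\mathrm{diag}(X,X)$ with $X$ a $2\times2$ symmetric ($\mathrm{rank}\,S=3$, the quadric being $\det$ of a symmetric $2\times2$, rank $3$) or generic $2\times2$ ($\mathrm{rank}\,S=4$), or the $4\times4$ skew-symmetric space (whose single degree-$2$ Pfaffian relation $\mathrm{Pf}$ squares to $\det$, and $\mathrm{Pf}$ is an irreducible rank-$6$... here rank-$4$... quadric). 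I would identify each normal form by matching the rank of $S$: skew-symmetric $4\times4$ gives $\det E = \mathrm{Pf}(E)^2$ with $\mathrm{Pf}$ a smooth (rank $4$... in $6$ variables; here in the variables of $E$) quadric; $\mathrm{diag}(X,X)$ gives $\det E = (\det X)^2$ with $\det X$ of rank $3$ or $4$ accordingly.

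The main obstacle I anticipate is the bookkeeping to pass from "$\mathrm{adj}(E) = S\cdot N$ with $N$ of generic rank $1$" to the rigid normal forms — in particular showing that $v$ and $w$ (the kernel/cokernel vectors) can be taken to depend \emph{linearly} on the parameters and that, after a change of basis matching kernel to cokernel appropriately, the $4\times4$ pencil decomposes as claimed. Concretely, one must rule out spurious configurations and show that the only possibilities for a $4$-dimensional space (or a codimension-$j$ subspace thereof) of $4\times 4$ matrices whose adjugate is $S$ times a rank-one linear matrix are the three listed forms. I expect to handle this by: (a) using the rank-$1$ factorization to reduce $E$ to a $2\times2$ block form via the kernel/image filtration of the generic pencil; (b) checking that the off-diagonal blocks must vanish using that all $3\times3$ minors — in particular the "mixed" ones — are divisible by the \emph{irreducible} $S$; and (c) recognizing the resulting $2\times2$ block of linear forms over $\BC^2$ (two copies tied together) as symmetric or full $2\times2$, with the skew-symmetric case arising when the tying identification is the symplectic one. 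I would also double check, using Corollary~\ref{codim1cor}(2) ($r=2$ even, degree $r/2+1 = 2$), that the ambient reduction to $\BC^4\ot\BC^4$ is legitimate, which the excerpt already records.
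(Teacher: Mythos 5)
Your proposal has a genuine gap at its central step, and a smaller misapplication of the lemma at the start.

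\textbf{The small issue.} Invoking Lemma~\ref{codim1lem2}(3) with $r=3$ on $E\subset\BC^4\ot\BC^4$ violates both hypotheses of that lemma: it requires $r\le\min\{\aaa,\bbb\}-2=2$, and it requires $E\ne E_{r+1}$, but $E_{4}=E$ automatically for $4\times4$ matrices. For the hypothesis that a degree-$2$ irreducible $S$ divides all $3\times3$ minors, the relevant parameter is $r=2$ (so $(r+1)\times(r+1)=3\times3$), and the correct citation is Corollary~\ref{codim1cor}(2), which only reconfirms $E\subset\BC^4\ot\BC^4$ — it does not do any further work. You flag this yourself at the end, so the real content has to come from the adjugate argument.

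\textbf{The real gap.} The adjugate bookkeeping is sound up to $\mathrm{adj}(E)=S\cdot N$ with $N$ a $4\times4$ matrix of linear forms and $E\cdot N=(\det E/S)\,I$, but the crucial assertion that \emph{$N$ has generic rank $1$} is false. The hypothesis forces $E$ to drop to rank $\le 2$ (not $3$) on $\{S=0\}$: all $3\times3$ minors vanish there by assumption, so $\mathrm{adj}(E)\equiv0$ on $\{S=0\}$, i.e. rank $\le2$. Taking determinants in $E N = (\det E/S) I$ gives $\det N=(\det E)^3/S^4$; since $S$ is irreducible this forces $\det E=cS^2$ and hence $\det N=c^3S^2\ne0$, so $N$ has full rank $4$ generically. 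And on $\{S=0\}$ the equation $EN=cS\,I$ only forces the columns of $N$ into $\ker E$, which is $2$-dimensional, so $N$ has rank exactly $2$ there (rank $2$, not $1$, in all the target normal forms — for instance in the skew case $N$ is again the linear skew $4\times4$ matrix with the same Pfaffian, and in the $\mathrm{diag}(X,X)$ case $N=\mathrm{diag}(\mathrm{adj}X,\mathrm{adj}X)$). Consequently there is no rank-one factorization $N=vw^t$ with linear $v,w$, and the ``key structural equation'' $E\,vw^t=QI$ on which the rest of your classification hangs never materializes. One would instead have to work with a rank-$\le2$ family over the quadric $\{S\}\cong\BP^1\times\BP^1$, which is substantially harder — essentially the Beilinson/Beauville sheaf-theoretic approach, and the paper's Remark~\ref{error} points out that even that published route contains an error (Proposition~1 of \cite{beauville2018introduction}) in exactly this setting.

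\textbf{What the paper does instead.} The paper avoids the adjugate entirely. It normalizes $\Delta_3=x_1S$, writes the top-left $3\times3$ block as $x_1Z+U$ where $U$ is a $3\times3$ space of bounded rank $2$, applies the Atkinson--Lloyd classification (compression or skew-symmetric) to $U$ to obtain three normal forms for that block, and then for each form expands a carefully chosen collection of $3\times3$ minors against the divisibility hypothesis, together with Lemma~\ref{appendixlem} identifying $2\times2$ blocks with prescribed determinant. That computational route sidesteps the rank-two-on-a-quadric subtlety that your approach runs into.
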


We defer the proof to \S \ref{sectionofproof}.

\begin{rem}\label{error}
In light of Proposition 1 in \cite{beauville2018introduction} and Corollary 6.8 in \cite{beilinson}, one might hope to weaken the hypothesis to the zero set of $\mathrm{det}(E)=0$ is a quadric hypersurface. However Proposition 1 is incorrect, as the following counter-example shows: let $E\subset \BC^4\ot\BC^4$ be defined as
$$E:=\begin{pmatrix}
x_1 & x_2 & 0 & 0\\
x_3 & x_4 & 0 & x_1\\
0 & 0 & x_1 & x_2\\
0 & 0 & x_3 & x_4\\
\end{pmatrix}.$$
Then $\mathrm{det}(E)=(x_1x_4-x_2x_3)^2$ but the line bundle morphism $E:\mathcal{O}_{\mathbb{P}^3}(-1)^4\rightarrow \mathcal{O}_{\mathbb{P}^3}^4$ fails to have constant rank on $\{x_1x_4-x_2x_3=0\}$.
\end{rem}

\subsection{Case $\mathrm{codim}(E_r)\leq n$}
A subspace $E\subset A\ot B$ is said to be \textbf{concise} if the associated tensor $T\in E^*\ot A\ot B$ is concise. Equivalently, there does not exist changes of bases in $A$ or $B$ such that any column or row of $E$ consists of only zero entries. This section studies upper bounds of $\aaa$ and $\bbb$ for concise spaces $E$ satisfying $\mathrm{codim}(E_r)\leq n$.

\begin{prop}\label{Ercodimnprop}
For any positive integer $r,n$, there exist positive integers $M_1,M_2$, such that if there exists a concise space $E\subset A\ot B:=\BC^{\aaa}\ot\BC^{\bbb}$ with $\mathrm{codim}(E_r)\leq n$, then at least one of the following holds:
\begin{enumerate}[(1)]
    \item $\aaa$ or $\bbb\leq M_1$;
    \item $\aaa,\bbb\leq M_2$;
    \item $\exists$ a hyperplane $F\subset E$ such that $\mathrm{codim}_F (F_r)\leq n-1$;
    \item $\exists 1\leq i\leq r$ such that $E=H+H'$ where $\mathrm{codim}(H'_{r-i})\leq n$ and $H'\subset \BC^i\ot B$ or $A\ot \BC^i$.
\end{enumerate}
\end{prop}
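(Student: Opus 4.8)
The plan is to prove the statement by induction, the outer induction on $n$ and, for fixed $n$, an inner induction on $\aaa+\bbb$. Two bookkeeping reductions come first. All four conclusions are monotone in $n$ (set $M_j(r,n):=\max_{0\le n'\le n}M_j(r,n')$, and note that conclusion (3) for $n'$ gives $\mathrm{codim}_F(F_r)\le n'-1\le n-1$, conclusion (4) for $n'$ gives $\mathrm{codim}(H'_{r-i})\le n'\le n$, etc.), so if $\mathrm{codim}(E_r)<n$ the induction hypothesis for its true value already yields the statement; hence we may assume $\mathrm{codim}(E_r)=n$. Likewise, if $\min\{\aaa,\bbb\}\le r$ then (1) holds with $M_1\ge r$, so we may assume $r<\min\{\aaa,\bbb\}$. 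The base case $n=0$ is where $E_r=E$, i.e.\ $E$ has bounded rank $r$, which I would handle via the Atkinson--Lloyd theory \cite{Atk81}: if $E$ is a compression space, $E\subset A'\ot B+A\ot B'$ with $\dim A'+\dim B'=r$, take $H'$ to be $E\cap(A'\ot B)$ or $E\cap(A\ot B')$ according to which of $\dim A',\dim B'$ is smaller (call it $i\le r/2$); then $H'\subset\BC^i\ot B$ (resp.\ $A\ot\BC^i$) has bounded rank $i\le r-i$, so $\mathrm{codim}(H'_{r-i})=0$ and $E=H+H'$ for any complement $H$, which is conclusion (4); while if $E$ is primitive then $\aaa,\bbb$ are bounded in terms of $r$ by the known bounds on primitive spaces of given bounded rank, giving (1) or (2).

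For the inductive step, pick an irreducible component $Z\subset E_r$ of codimension $n$. If $Z$ is contained in a hyperplane $F$ of $E$, then $Z\subset F_r=F\cap E_r$, so $\mathrm{codim}_F(F_r)\le\mathrm{codim}_F(Z)=n-1$: conclusion (3). So assume $Z$ spans $E$. If $E$ is E1-generic ($\kappa=0$), then Theorem \ref{1genthm}(1), which applies since $r<\min\{\aaa,\bbb\}$, gives $n=\mathrm{codim}(E_r)\ge\aaa+\bbb-2r-1$, whence $\aaa+\bbb\le 2r+n+1$: conclusion (2) with $M_2\ge 2r+n+1$. This is the analogue of the ``irreducible minors'' branch in the proof of Proposition \ref{segreprop}.

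There remains the case where $E$ is not E1-generic and $Z$ spans $E$. After changes of bases $E$ has a zero entry, and taking a maximal projective linear subspace of $\mathbb{P}E^{\perp}\cap\mathrm{Seg}(\mathbb{P}A^*\times\mathbb{P}B^*)$ one may assume $E$ has a zero $p\times q$ block, $E=\left(\begin{smallmatrix}0&R\\C&W\end{smallmatrix}\right)$, with $p,q\ge1$ as large as possible. From $\mathrm{rank}\left(\begin{smallmatrix}0&R\\C&W\end{smallmatrix}\right)\ge\mathrm{rank}(R)+\mathrm{rank}(C)$ one gets $E_r\subset\bigcup_{u+v\le r}\{M\in E:\mathrm{rank}(R(M))\le u,\ \mathrm{rank}(C(M))\le v\}$, and since $Z$ is irreducible of codimension $n$ it lies in one of these sets; projecting onto the block spaces $\mathcal R\subset\BC^p\ot\BC^{\bbb-q}$ and $\mathcal C\subset\BC^{\aaa-p}\ot\BC^q$, which are linear images of $E$, this yields $u+v\le r$ with $\mathrm{codim}_{\mathcal R}(\mathcal R_u)\le n$ and $\mathrm{codim}_{\mathcal C}(\mathcal C_v)\le n$. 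The idea is then to play off the sizes of $p,q$ and of these block spaces: when one of $\mathcal R,\mathcal C$ has small dimension, the complementary kernel $\{M\in E:C(M)=0\}$ (resp.\ $\{M\in E:R(M)=0\}$) is a large subspace of $E$ sitting in few columns (resp.\ rows), which should feed conclusion (4); otherwise $\mathcal R$ and $\mathcal C$, made concise by deleting zero rows and columns, have strictly smaller ambient dimension ($p+(\bbb-q)<\aaa+\bbb$, and symmetrically) together with a bounded-codimension rank locus at threshold at most $r$, so the induction hypothesis applies to them and their structure is pulled back to $E$ through the zero-block decomposition; sub-cases in which the minors of $\mathcal R$ or $\mathcal C$ acquire a common polynomial factor are dealt with by Lemma \ref{codim1lem2} and Corollary \ref{codim1cor}.

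The main obstacle is this last pull-back. Since $\mathcal R$ and $\mathcal C$ are subquotients of $E$ rather than subspaces, conclusions (3) and (4) for them do not lift mechanically, and I expect one must use the maximality of the $p\times q$ block to control the residual degeneracy of $\mathcal R$, $\mathcal C$ so that a hyperplane of $\mathcal R$ pulls back to a hyperplane of $E$ and a thin subspace of $\mathcal R$ to a thin splitting $E=H+H'$. The constants $M_1,M_2$ are then produced as maxima over the resulting finite recursion tree in $(r,n,\aaa+\bbb)$, and one must check that this tree terminates and that the four alternatives propagate correctly along it — routine but requiring care. By contrast, the E1-generic case and the hyperplane case are immediate from Theorem \ref{1genthm} and a dimension count.
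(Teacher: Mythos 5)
Your proposal takes a genuinely different route from the paper's. The paper inducts on $r$, with an inner induction on the index of degeneracy $\kappa$, dividing into the cases $\kappa=0$, $\kappa=1$, $2\le\kappa\le\max\{M_1(r-1,n),M_2(r-1,n)\}$, and $\kappa$ larger; in the last case it applies the inductive hypothesis at $r-1$ to the concise block $H$ of the decomposition (\ref{kappablock}) to bound the number of its columns, using $(r+1)\times(r+1)$ minors built from an entry of $G$ and an $r\times r$ minor of $H$. You instead fix $r$, induct on $(n,\aaa+\bbb)$, and split on whether a maximal-dimension component $Z$ of $E_r$ lies in a hyperplane or spans $E$, in the spanning non-E1-generic case projecting onto the two off-diagonal blocks $\mathcal R,\mathcal C$ of a zero-block decomposition of $E$.

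The gap you flag is real and, as stated, it is a hole rather than a routine check. If the inductive hypothesis for $\mathcal R$ returns conclusion (3) --- a hyperplane $\mathcal F\subset\mathcal R$ with $\mathrm{codim}_{\mathcal F}(\mathcal F_u)\le n-1$ --- then the pullback hyperplane $F\subset E$ only satisfies $\mathrm{codim}_F\{M\in F:\mathrm{rank}(R(M))\le u\}\le n-1$; this does not yield $\mathrm{codim}_F(F_r)\le n-1$, since membership in $F_r$ also requires $\mathrm{rank}(C(M))$ to be small, and nothing forces the new codimension-$\le(n-1)$ component of $\{\mathrm{rank}(R)\le u\}$ inside $F$ to lie in $\{\mathrm{rank}(C)\le r-u\}$. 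The same difficulty arises when $\mathcal R$ returns conclusion (4). Two further issues: you silently change the rank threshold from $r$ to $u$ (resp.\ $v$) when passing to $\mathcal R$ (resp.\ $\mathcal C$), so your induction must carry $r$ as an additional decreasing variable, as the paper's does; and the base case $n=0$ is incomplete, since by Atkinson--Lloyd a bounded-rank space that is not compression need not be primitive, it may be a ``sum'' of a compression space and a primitive space, which requires a third branch. The hyperplane dichotomy (giving conclusion (3)) and the E1-generic bound via Theorem \ref{1genthm} are sound and parallel the paper's $\kappa=0$ case, but the inductive core is not yet a proof.
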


\begin{proof}
Proof by induction on $r$. For $r=1$, by Proposition \ref{segreprop} we can set $M_1=1$ and $M_2=n+1$.For $r\geq 2$ we divide the problem into different cases by the value of $\kappa$.

1. Case $\kappa=0$.

$\kappa=0$ if and only if $E$ is E1-generic. Since $E_r\neq E$, $\aaa,\bbb\geq r+1$. By Theorem \ref{1genthm}, $\aaa+\bbb\leq n+2r+1$.

~\\
2. Case $\kappa=1$. 

We can put $y^1_1=0$ by changing bases. Then the $(\aaa-1)\times(\bbb-1)$ submatrix consisting of entries in the last $\aaa-1$ rows and the last $\bbb-1$ columns is either 1-generic, or has $\kappa=1$ so we can put $y^2_2=0$. Repeat this procedure until the bottom right $(\aaa-k)\times(\bbb-k)$ submatrix is 1-generic. Then $E=\begin{pmatrix}
C_{k\times k} & *\\
* & D_{s\times t}
\end{pmatrix}$
where $D$ is 1-generic, $s=\aaa-k,t=\bbb-k$ and
$$C=\begin{pmatrix}
0 & * & * & \cdots & *& *\\
* & 0 & * & \cdots & *& *\\
* & * & 0 & \cdots & *& *\\
\vdots & \vdots & \vdots & \ddots & \vdots\\
* & * & * & \cdots& 0 & *\\
* & * & * & \cdots& * & 0\\
\end{pmatrix}.$$

If $k\geq r+1$, consider the submatrix $C'$ consisting entries in the first $r+1$ rows and the last $k-1$ columns of $C$:
$$C'_{(r+1)\times(k-1)}=\begin{pmatrix}
 * & * & \cdots & *& *\\
 0 & * & \cdots & *& *\\
 * & 0 & \cdots & *& *\\
 \vdots & \vdots & \ddots & \vdots\\
 * & * & \cdots& * & 0\\
\end{pmatrix}$$

By the definition of $\kappa$, all nonzero entries in the same row or column of $C$ are linearly independent. Therefore $C'$ is a codimension $r-1$ subspace of some 1-generic space in $\BC^{k-1}\ot\BC^{r+1}$. By 2 of Theorem \ref{1genthm}, all $(r+1)\times(r+1)$ minors of $C'$ determines a subvariety of codimension $\geq k-r-1$, so $k\leq n+r+1$.

Now to find upper bounds for $s$ and $t$. If $s=r$ and $t\geq r$, the submatrix consisting of entries in the last $s+1$ rows and the last $t+1$ columns is a codimension 1 subspace of a 1-generic space in $\BC^{s+1}\otimes\BC^{t+1}$. So by 2 of Theorem \ref{1genthm} again $t\leq n+r-1$. Similarly if $t=r$ then $s\leq n+r-1$.

If $s,t\geq r+1$, the submatrix consisting of entries in the last $s$ rows and the last $t+1$ columns is 1-generic. By 1 of Theorem \ref{1genthm}, $n\geq 2$ and $s+t\leq n+2r$.

To put everything together, either $\aaa\leq n+2r$, $\bbb\leq n+2r$ or $\aaa,\bbb\leq 2n+2r$.

~\\
3. Case $2\leq \kappa \leq \max\{M_1(r-1,n), M_2(r-1,n)\}$.

Claim: there exist $M_i(r,n,g),i=1,2$ such that if $E$ has $\kappa=g$ and satisfies the hypothesis of the proposition, then either $\aaa$ or $\bbb\leq M_1(r,n,g)$, or $\aaa,\bbb\leq M_2(r,n,g)$, or the condition (3) holds.

We will find $M_i=M_i(r,n,g)$ by induction on $g$. By the last case, we can set $M_1(r,n,1)=n+2r$ and $M_2(r,n,1)=2n+2r$. Assume claim is true for spaces of $\kappa<g$.

Write 
$E=\begin{pmatrix}
C_{k\times k} & *\\
* & D_{s\times t}
\end{pmatrix}$ 
such that $C$ has zeros on the diagonal and $D$ is 1-generic. Let $M(r,n,g-1):=\max\{M_1(r,n,g-1),M_2(r,n,g-1)\}$. If $k>2M(r,n,g-1)+1$, then the submatrix $C'$ consisting of entries in the first $M(r,n,g-1)+1$ rows and the last $M(r,n,g-1)+1$ columns of $C$ is a space of $\kappa=g-1$. However by the definitions of $M_i(r,n,g-1)$'s, all $(r+1)\times(r+1)$ minors of $C'$ determine of codimension $>n$ subset. Therefore $k\leq2M(r,n,g-1)+1$.

Now $s$ and $t$ has the same upper bound as the last case. So we can set $M_1(r,n,g):=2M(r,n,g-1)+r$ and $M_1(r,n,g):=2M(r,n,g-1)+r+n$ which proves the claim.

~\\
4. Case $\kappa\geq \max\{M_1(r-1,n), M_2(r-1,n)\}+1$.

Choose bases and possibly take transpose so that all entries in the top left $\kappa\times\kappa'$ block of $E$ are zero for some $1\leq \kappa'\leq \kappa$. So
\begin{equation}\label{kappablock}
E=\begin{pmatrix}
O_{\kappa\times\kappa'} & H\\
G & D_{s\times t}
\end{pmatrix}.
\end{equation}
We take the largest $\kappa'$ so that the submatrix $H$ is concise in $\BC^{\kappa}\otimes\BC^{t}$. By the definition of $\kappa$, $G$ is 1-generic.

Consider the $(r+1)\times(r+1)$ minors consisting of any single entry of $G$ and any $r\times r$ minor of $H$. We must have $\mathrm{codim}_H (H_{r-1})\leq n$ unless condition (3) holds. Since $\kappa\geq \max\{M_1(r-1,n), M_2(r-1,n)\}+1$, $t\leq M_1(r-1,n)$.

If $\kappa'\leq r$, then $\bbb\leq  M_1(r-1,n)+r$.

If $\kappa'\geq r+1>s$, consider the $(r+1)\times(r+1)$ minors that are a product of an $s\times S$ minor of $G$ and a $(r+1-s)\times (r+1-s)$ minor of $H$. Then either $\mathrm{codim}_G (G_{s-1})\leq n$ or $\mathrm{codim}_H (H_{r-s})\leq n$. The latter inequality implies condition (4) holds. The former inequality implies $\kappa'\leq n+s-1\leq n+r-1$, then $\bbb\leq n+r-1+M_1(r-1,n)$.

If $\kappa',s\geq r+1$, then $\mathrm{codim}_G (G_{r})\leq n$. By Theorem \ref{1genthm} $s+\kappa'\leq n+2r+1$. So $\bbb\leq n+r+M_1(r-1,n)$.

~\\
Since $M_i(r,n,g+1)\geq M_i(r,n,g)$ for $g\geq 0$, $M_i(r,n,g)$ takes the maximum at $g=g':=\max\{M_1(r-1,n), M_2(r-1,n)\}$. So we can put $M_1(n,r)=\max\{M_1(r,n,g'),n+r+M_1(r-1,n)\}$ and $M_2=M_1(r,n,g')$ which proves the proposition.
\end{proof}

\begin{cor}\label{E2codim2cor}
Let $E\subset A\ot B$ be concise and satisfy $\mathrm{codim}E_2=2$. Then at least one of the following holds:
\begin{enumerate}
    \item $\aaa$ or $\bbb\leq 6$;
    \item $\aaa,\bbb\leq 8$;
    \item $\exists$ a hyperplane $F\subset E$ such that $\mathrm{codim}_F F_2 \leq 1$;
    \item $E$ has bounded rank 2.
\end{enumerate}
\end{cor}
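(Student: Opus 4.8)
The plan is to obtain this Corollary from Proposition \ref{Ercodimnprop} specialised to $r=n=2$, with two additions: pinning the constants to $6$ and $8$, and rewriting its fourth alternative. Since $E_1\subseteq E_2$ we always have $\mathrm{codim}(E_1)\ge\mathrm{codim}(E_2)=2$, so it is convenient to dispose first of the case $\mathrm{codim}(E_1)=2$. Here Proposition \ref{segreprop} with $n=2$ yields $0\le j\le2$ and $F\subset E$ of codimension $j$ with either $F\subset\BC^k\ot\BC^l$ for some $k,l\ge2$ and $k+l\le5-j$, or $j=2$ and $F$ of bounded rank $1$. If $j=0$, then $E\subset\BC^k\ot\BC^l$ with $k,l\le3$, so conciseness gives $\aaa,\bbb\le3$ (alternative (1)). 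If $j=1$, then $k=l=2$, so $F$ is a hyperplane lying in a $\BC^2\ot\BC^2$, hence of bounded rank $2$ with $\mathrm{codim}_F(F_2)=0$ (alternative (3)). If $j=2$ with $F$ of bounded rank $1$, then $\mathrm{codim}_E(F)=2$ and, adjoining a line $L\not\subset F$, the hyperplane $F':=F\oplus L$ satisfies $F\subseteq F'_2$, so $\mathrm{codim}_{F'}(F'_2)\le\mathrm{codim}_{F'}(F)=1$ (alternative (3)).

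Assume now $\mathrm{codim}(E_1)\ge3$, and run the induction on the index of degeneracy $\kappa$ as in the proof of Proposition \ref{Ercodimnprop} with $r=n=2$. Since $E_2\ne E$ forces $\aaa,\bbb\ge3$, the case $\kappa=0$, where $E$ is E1-generic, gives by part $1$ of Theorem \ref{1genthm} that $\aaa+\bbb-5\le\mathrm{codim}(E_2)=2$, so $\aaa,\bbb\le4$. For $\kappa=1$, write $E$ in block form with a zero-diagonal corner $C_{k\times k}$ and an E1-generic corner $D_{s\times t}$; Theorem \ref{1genthm} then forces $k\le5$ and $s,t\le3$ in the non-degenerate configurations, whence $\aaa$ or $\bbb\le6$, or $\aaa,\bbb\le8$. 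For $\kappa\ge2$, use the block form $\left(\begin{smallmatrix} O_{\kappa\times\kappa'} & H\\ G & D\end{smallmatrix}\right)$ with $H$ concise in $\BC^\kappa\ot\BC^t$ and $G$ E1-generic; the $3\times3$ minors that are a product of one entry of $G$ with one $2\times2$ minor of $H$ force $\mathrm{codim}_H(H_1)\le2$ unless alternative (3) already holds, and Proposition \ref{segreprop} applied to $H$ --- whose pertinent base constants are merely $M_1(1,2)=1$ and $M_2(1,2)=3$ --- then either places $H$ inside a $\BC^a\ot\BC^b$ with $a+b\le5$ (so $\bbb\le6$), or produces inside $H$ a bounded-rank-$1$ subspace of codimension $\le2$ or a hyperplane lying in some $\BC^2\ot\BC^2$, which pull back to conditions (3) or (4) of Proposition \ref{Ercodimnprop}, or bounds $\kappa+s$ through Theorem \ref{1genthm}.

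It remains to rewrite condition (4) of Proposition \ref{Ercodimnprop} --- $E=H+H'$ with $H'\subset\BC^i\ot B$ or $A\ot\BC^i$ and $\mathrm{codim}(H'_{2-i})\le2$ for some $i\in\{1,2\}$ --- as one of the present alternatives. If $i=2$, then $H'_0=\{0\}$, so $\dim E\le2$ and every hyperplane $F\subsetneq E$ satisfies $\mathrm{codim}_F(F_2)\le1$ trivially (alternative (3)). If $i=1$, then $H'$ has bounded rank $1$, so $H'_1=H'$ and $\mathrm{codim}_E(H')\le2$: when $H'=E$, conciseness forces $\aaa=1$ and $E$ has bounded rank $1\le2$ (alternative (4)); when $\mathrm{codim}_E(H')=1$ we take $F=H'$ (alternative (3)); and when $\mathrm{codim}_E(H')=2$ we pick a line $L\not\subset H'$ and set $F:=H'\oplus L$, a hyperplane whose rank-$\le2$ locus contains the codimension-one subspace $H'$ (alternative (3)).

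The main obstacle is the case $\kappa\ge2$: one has to confirm that, for $r=n=2$, the generous constants generated by the general induction of Proposition \ref{Ercodimnprop} can be sharpened to exactly $6$ and $8$. This rests on never recursing more than one level --- the inputs $M_1(1,2)=1$ and $M_2(1,2)=3$ from Proposition \ref{segreprop} being this small --- and on the E1-generic codimension estimates of Theorem \ref{1genthm} being attained in the extremal configurations. The most delicate sub-case is the one in which a corner of $E$ is spanned by a space of $2\times2$ matrices together with a single extra direction; handling it cleanly requires a finer use of the hypothesis $\mathrm{codim}(E_2)=2$ rather than the crude minor bound alone. The reduction to $\mathrm{codim}(E_1)\ge3$ and the translation of condition (4) are routine by comparison.
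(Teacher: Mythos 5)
Your overall strategy — redo the $\kappa$-analysis from the proof of Proposition \ref{Ercodimnprop} at $r=n=2$ and verify the constants come out to $6$ and $8$ — is the right one, and the $\kappa=0$ and $\kappa=1$ cases indeed go through exactly as you describe; they are also where the paper begins. Your preliminary disposal of the case $\mathrm{codim}(E_1)=2$ via Proposition \ref{segreprop}, and your translation of condition (4) of Proposition \ref{Ercodimnprop}, are both correct as far as they go, but they are extraneous: the paper's proof skips both and simply runs the $\kappa$-analysis on $E$ directly.

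The genuine gap is where you flag it, at $\kappa\geq 2$, and it is not merely a matter of sharpening constants. You propose to apply Proposition \ref{segreprop} to $H$ after extracting $\mathrm{codim}_H(H_1)\le 2$ from the factorisable $3\times 3$ minors, but the alternatives it returns do not pull back to the alternatives of the corollary. A codimension-$j$ subspace $F\subset H$ sitting in a small $\BC^k\ot\BC^l$, or a bounded-rank-$1$ $F\subset H$ of codimension $2$, controls only the rank of the $H$-block; on the preimage hyperplane in $E$ the blocks $G$ and $D$ are still present, and the rank of $\left(\begin{smallmatrix}O&H\\G&D\end{smallmatrix}\right)$ is bounded below by both $\mathrm{rank}(H)$ \emph{and} $\mathrm{rank}(G)$, so one does not get $\mathrm{codim}_F(F_2)\le 1$ from small $H$-rank alone. (Your $j=0$ sub-case, which gives $\kappa+t\le5$ and hence $\bbb\le\kappa'+t\le\kappa+t\le5$, is the one that does work.) The paper's own argument for $\kappa\in\{2,3\}$ avoids this altogether: it shows that when alternative (3) fails, $H$ must actually be E1-generic (not just concise), so that Theorem \ref{1genthm} applied to the $H$-block already forces $\kappa+t-3\le\mathrm{codim}(E_2)=2$ and hence $\bbb\le 5$, with no appeal to Proposition \ref{segreprop}. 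For $\kappa\ge 4$ the paper uses the bound $t\le M_1(1,2)=1$ to conclude $H$ is a single column of bounded rank $1$, then either $\kappa'$ is small (so $\bbb$ is small) or, by symmetry, $G$ is also a single row, in which case the whole of $E$ has bounded rank $2$ — this last branch is the source of alternative (4), which your proposal does not reach for $\kappa\ge 2$.
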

\begin{proof}
For $\kappa=0$ or $1$, by the proof of Proposition \ref{Ercodimnprop} either $\aaa$, $\bbb\leq 6$, or $\aaa,\bbb\leq 8$.

For $\kappa=2$ or $3$, put $E$ into the form \ref{kappablock}. If the condition (3) does not hold, $G$ and $H$ are both 1-generic. Then by Theorem \ref{1genthm}, $\aaa$ or $\bbb\leq 6$.

For $\kappa\geq 4$, $H$ must have bounded rank 1 and $t=1$. If $\kappa\leq 3$, then $\bbb\leq 5$. If $\kappa\geq 4$, $G$ must have bounded rank 1 and $s=1$, then $E$ has bounded rank 2.
\end{proof}

\section{Geometric Rank 3}\label{sec:gr3}

This section studies the structure of the set of tensors with geometric rank at most 3.

\begin{thm}\label{gr3thm}
A tensor $T\in A\ot B\ot C$ has geometric rank at most 3 if and only if one of the following conditions holds:
\begin{enumerate}
    \item $T(A^*)$, $T(B^*)$ or $T(C^*)$ is of bounded rank $3$, or
    \item $\mathrm{SR}(T)\leq 3$, or
    \item up to changes of bases $T=M_{\langle 2\rangle}$.
\end{enumerate}
    
If $T$ is primitive of geometric rank 3, then up to changes of bases and permutations of $A$, $B$ and $C$, it is either the matrix multiplication tensor $M_{\langle 2\rangle}$ or the tensor such that $T(A^*)$ is a space of $4\times 4$ skew-symmetric matrices of dimension $4,5$ or $6$.
\end{thm}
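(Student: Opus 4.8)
\textbf{Proof proposal for Theorem \ref{gr3thm}.}

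The plan is to reduce the whole classification to the determinantal-variety statements of Section \ref{section:determinantal} via the alternative definition (\ref{altdef}) and the decomposition Lemma \ref{decomplem}. First I would dispose of the easy direction: if $T(A^*)$, $T(B^*)$ or $T(C^*)$ has bounded rank $3$, or $\mathrm{SR}(T)\leq 3$, or $T=M_{\langle 2\rangle}$, then $\GR(T)\leq 3$ — the first two cases are immediate from (\ref{altdef}) (taking $i=3$, or using $\GR\leq\SR$), and the last follows from Corollary \ref{MMprimitive} together with Theorem 6.1 of \cite{KMZ20}, which gives $\GR(M_{\langle 2\rangle})=3$. For the hard direction and the structural statement, by Lemma \ref{decomplem} it suffices to classify primitive tensors of geometric rank exactly $3$ (any non-primitive $T$ with $\GR(T)\le 3$ splits off geometric-rank-$1$ pieces, which are slice-rank-$1$, landing us in case (2) once we track that the primitive part is either compression — forcing $\SR(T)\le 3$ — or one of the two exceptional tensors, whose sum with a slice-rank-one tensor is handled by noting it still satisfies (1) or is $M_{\langle 2\rangle}$ up to basis change; this bookkeeping needs care).

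So assume $T$ is primitive with $\GR(T)=3$. By (\ref{altdef}) there is, say on the $A$ side, an $i\in\{0,1,2,3\}$ with $\mathrm{codim}(A^*_i)+i=3$, and $T(A^*_i)\subset B\ot C$ is the linear determinantal variety $E_i$ for $E:=T(A^*)$. If $i=3$ we may take $E$ itself, but then (after deleting zero rows/columns) $E$ has bounded rank $3$ and we are in case (1); the analysis of \cite{Atk83,EH88} of primitive bounded-rank-$3$ spaces, combined with the remark in Section \ref{section:primitive} that each such space yields a primitive tensor, gives nothing new beyond case (1). If $i=0$ then $\mathrm{codim}(E_0)=3$, i.e.\ $\mathrm{codim}(E_1)\le 3$ where $E_1$ is meant with $r=0$ — more precisely $E_0=\{0\}$ forces $E$ of dimension $3$, again landing in (1) or $\SR\le 3$. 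The genuinely interesting cases are $i=1$ (so $\mathrm{codim}(E_1)=2$) and $i=2$ (so $\mathrm{codim}(E_2)=1$). For $i=2$: $\mathrm{codim}(E_2)=1$ means all $3\times3$ minors of $E$ share a common factor $P$; as in the discussion opening Section \ref{E2codim1Section}, $\deg P\in\{1,2,3\}$, and Lemma \ref{codim1lem}, Corollary \ref{codim1cor} and Proposition \ref{E2codim1prop} pin $E$ down: either $E$ has bounded rank $3$ (case (1)), or $E\subset\BC^3\ot\BC^3$ (then $\SR(T)\le 3$, case (2)), or $E$ is a space of $4\times4$ skew-symmetric matrices of dimension $4,5,6$ (the exceptional family), or $E=\mathrm{diag}(X,X)$ with $X$ a $2\times2$ generic or symmetric matrix space — and one checks directly that $\mathrm{diag}(X_{\mathrm{gen}},X_{\mathrm{gen}})$ is exactly $M_{\langle 2\rangle}$ up to basis change (this is (\ref{mm}) with $D$ the generic $2\times 2$ block), while $\mathrm{diag}(X_{\mathrm{sym}},X_{\mathrm{sym}})$ has $\SR\le 3$ or reduces to a smaller case. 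For $i=1$: $\mathrm{codim}(E_1)=2$, and I would invoke Corollary \ref{E2codim2cor} (conciseness of $E$ following from primitivity of $T$); its alternatives (3) and (4) are excluded — (4) says $E$ has bounded rank $2$, contradicting $\GR(T)=3$ via (\ref{altdef}), and (3), a hyperplane $F\subset E$ with $\mathrm{codim}_F F_2\le1$, would by Lemma \ref{primitivelem} contradict primitivity — leaving $\aaa,\bbb$ bounded by $8$, after which a finite check (using Proposition \ref{segreprop} to control $E_1$ and Theorem \ref{1genthm} for the E1-generic pieces) shows $E$ is again one of: bounded rank $3$, inside $\BC^3\ot\BC^3$, skew-symmetric $4\times4$, or $M_{\langle 2\rangle}$.

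The main obstacle I anticipate is the $i=1$ case, i.e.\ genuinely classifying concise $E$ with $\mathrm{codim}(E_1)=2$ that are not excluded by primitivity: Corollary \ref{E2codim2cor} only bounds the ambient dimensions, so one must actually enumerate the possibilities in $\BC^{\le 8}\ot\BC^{\le 8}$, and the bookkeeping to show each either reappears in cases (1)--(3) or is forced to be the skew-symmetric family is the delicate part — in particular separating the honestly primitive skew-symmetric examples from disguised compression spaces, and verifying that the two $\mathrm{diag}(X,X)$ spaces produced by Proposition \ref{E2codim1prop} are respectively $M_{\langle 2\rangle}$ and a non-primitive (slice-rank $\le 3$) tensor. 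A secondary subtlety is making the permutation symmetry explicit: the hypothesis in (\ref{altdef}) may be realized on the $B$ or $C$ side, so throughout one argues up to permuting $A,B,C$, and the final normal form ``$T(A^*)$ is skew-symmetric $4\times4$'' is stated only after such a permutation.
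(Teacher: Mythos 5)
Your overall strategy — case-splitting on the index $i$ realizing the minimum in (\ref{altdef}) and feeding the resulting codimension conditions on $E := T(A^*)$ into the results of Section \ref{section:determinantal} — does match the paper's approach, and your handling of the easy direction, the $i=0,3$ cases, and the reduction via Lemma \ref{decomplem} is fine. However, there is a genuine error in the $i=1$ step, and it makes that case both wrong as written and much more complicated than it needs to be.

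When $i=1$ you have $\mathrm{codim}(E_1)=2$, but you invoke Corollary \ref{E2codim2cor}, whose hypothesis is $\mathrm{codim}(E_2)=2$. That corollary is about the $3\times 3$ minors and is the tool for case (3) of Theorem \ref{gr4thm}; it says nothing about $E_1$. The relevant result here is Proposition \ref{segreprop} with $n=2$. Once you use it, the case collapses immediately: if some nonzero $2\times 2$ minor of $E$ factors as $\ell_1\ell_2$, then a maximal-dimensional component of $E_1$ lies in a hyperplane of $E$, so Lemma \ref{primitivelem} (with $r=3$, $i=1$) shows $T$ is not primitive; if all nonzero $2\times 2$ minors are irreducible, the proof of Proposition \ref{segreprop} forces $E \subset \BC^k\otimes\BC^l$ with $k+l \le 5$, $k,l\ge 2$, hence $E$ has bounded rank $2$. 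No dimension bound of $8$, no finite enumeration, and no appeal to Theorem \ref{1genthm} is needed. The ``finite check in $\BC^{\le 8}\otimes\BC^{\le 8}$'' you flag as the main obstacle is an artifact of reaching for the wrong corollary.

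A secondary inaccuracy: you present the $4\times4$ skew-symmetric family as emerging from the $i=2$ analysis (Proposition \ref{E2codim1prop}). While that proposition does list a skew-symmetric option, the theorem's final description ``$T(A^*)$ is a space of $4\times 4$ skew-symmetric matrices of dimension $4,5,6$'' is obtained in the paper from the $i=3$ / bounded-rank-$3$ branch: by the classification in \cite{Atk83,EH88}, if (say) $T(B^*)$ is a primitive space of bounded rank $3$, then $T(A^*)$ is forced to be $4\times 4$ skew-symmetric. Note those skew-symmetric spaces have generic rank $4$, not $3$, so they do not themselves satisfy the bounded-rank-$3$ condition — they are the companion slice of a tensor that does. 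Being explicit about which slice has bounded rank $3$ and which is skew-symmetric is needed to state the normal form correctly up to permutation of $A,B,C$.
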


\begin{proof}
By (\ref{altdef}), $\mathrm{GR}(T)\leq 3$ if and only if at least one of the following three cases holds:
\begin{enumerate}[(i)]
    \item $\mathrm{codim} A^*_3=0$;
    \item $\mathrm{codim} A^*_2\leq 1$;
    \item $\mathrm{codim} A^*_1\leq 2$.
\end{enumerate}

Case (i): $\mathrm{codim} A^*_3=0\iff T(A^*)$ has bounded rank 3.

Case (ii): If $\mathrm{codim} A^*_2=0$, then $\mathrm{GR}(T)=2$, so $T(A^*)$, $T(B^*)$ or $T(C^*)$ is of bounded rank $2$.

When $\mathrm{codim} A^*_2=1$, according to the discussion in \S\ref{E2codim1Section} and Proposition \ref{E2codim1prop}, at least one of the following holds:
\begin{enumerate}
    \item $T=T'+T''$ where $T'(A^*)$ is a space of bounded rank 2 and $\mathrm{ml}_A(T'')=1$, so $T$ is not primitive. 
    \item $T(A^*),T(B^*)$ or $T(C^*)$ has bounded rank 3;
    \item up to changes of bases $T=M_{\langle 2 \rangle}$.
\end{enumerate}
By classification of $\mathcal{GR}_2$, any non-primitive tensor of $\GR=3$ is either compression or at least one of $T(A^*),T(B^*)$ and $T(C^*)$ has bounded rank 3.

Case (iii): By the discussion in \S\ref{E1section}, if there is a nonzero $2\times 2$ minor that is a product of 2 linear forms, $T$ is not primitive. If all nonzero $2\times2$ minors are irreducible, $T(A^*)\subset \mathbb{C}^2\otimes\mathbb{C}^3$ or $\mathbb{C}^3\otimes\mathbb{C}^2$, so has bounded rank 2.

By classification of primitive spaces of bounded rank 3 \cite{EH88}, if $T(A^*)$ is primitive spaces of bounded rank 3, then either $T(B^*)$ or $T(C^*)$ is $4\times 4$ skew-symmetric.
\end{proof}

By classifications of $\mathcal{GR}_r$ for $r=1,2$ and $3$, we summarize the following relations between geometric rank and slice rank:
\begin{cor}
\begin{enumerate}
    \item $\GR(T)=1\iff\SR(T)=1$.
    \item If $ml_A(T)$, $ml_B(T)$ or $ml_C(T)> 3$, then $\GR(T)=2\iff\SR(T)=2$.
    \item If at least one of $ml_A(T)$, $ml_B(T)$ and $ml_C(T)> 6$, or at least two of them $>4$, then $\GR(T)=3\iff\SR(T)=3$.
\end{enumerate}
\end{cor}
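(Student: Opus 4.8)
The plan is to deduce each biconditional from the single assertion that, under the stated multilinear-rank hypothesis, the tensor must be a compression tensor, for which $\SR(T)=\GR(T)$ by Definition~\ref{primitivedef}; the ingredients are the inequality $\GR(T)\le\SR(T)$ from the introduction, the decomposition of Lemma~\ref{decomplem}, and the classification of primitive tensors of geometric rank at most $3$: none of geometric rank $1$; the skew-symmetric $3\times3\times3$ tensor, with multilinear ranks $(3,3,3)$, as the unique one of geometric rank $2$; and, by Theorem~\ref{gr3thm}, either $M_{\langle 2\rangle}$ with multilinear ranks $(4,4,4)$ or a tensor whose first flattening is a space of $4\times4$ skew-symmetric matrices of dimension $4$, $5$ or $6$, with multilinear ranks $(d,4,4)$, $d\le 6$, among those of geometric rank $3$. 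The common point is that none of these primitive tensors meets the hypothesis "one multilinear rank exceeds $6$, or two exceed $4$".

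The implications $\SR(T)=r\Rightarrow\GR(T)=r$ I would prove by a short cascade. Since $\GR(T)\le\SR(T)$ we get $\GR(T)\le r$; moreover $\GR(T)=0$ forces $T=0$ and hence $\SR(T)=0$, $\GR(T)=1$ forces $\SR(T)=1$ by item~1, and $\GR(T)=2$ forces $\SR(T)=2$ by item~2, because each hypothesis of items~2 and~3 makes at least one of $\ml_A(T),\ml_B(T),\ml_C(T)$ exceed $3$. For the reverse implications, assume $\GR(T)=r$ and that $T$ is not compression; Lemma~\ref{decomplem} writes $T=T_p+T_c$ with $T_p$ primitive, $T_c$ compression and $\GR(T_p)+\GR(T_c)=r$. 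Since no primitive tensor has geometric rank $\le 1$, we have $\GR(T_p)\ge 2$: this is impossible for $r=1$ (so item~1 is done), forces $T=T_p$ the skew $3\times3\times3$ tensor when $r=2$ (whose multilinear ranks $(3,3,3)$ contradict the hypothesis of item~2, so item~2 is done), and leaves for $r=3$ either $\GR(T_p)=3$ — so by Theorem~\ref{gr3thm} $T$ is one of the two rank-$3$ families, with multilinear ranks $(4,4,4)$ or $(d,4,4)$, $d\le 6$, again contradicting the hypothesis of item~3 — or $\GR(T_p)=2$ and $\GR(T_c)=1$.

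The remaining case, $r=3$ with $T_p$ the skew $3\times3\times3$ tensor and $T_c$ a single slice (since $\GR(T_c)=1\Leftrightarrow\SR(T_c)=1$, the tensor $T_c$ lies in $\langle v\rangle$ tensored with the other two factors, for a vector $v$ in one distinguished factor), is the crux: $T_c$ may have large multilinear rank in the two non-distinguished factors, so the hypothesis of item~3 is not automatically contradicted, and one must show $\SR(T)=3$ directly. Here $T$ is not primitive of geometric rank $3$, so by Theorem~\ref{gr3thm} either $\SR(T)\le 3$ — whence $\SR(T)=3$, as $\SR(T)\ge\GR(T)=3$ — or one of $T(A^*),T(B^*),T(C^*)$ has bounded rank $3$; in fact the flattenings of $T$ in the two non-distinguished directions do, because removing $T_c$ leaves the skew $3\times3\times3$ tensor and restoring it adds at most rank one to those flattenings. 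If that bounded-rank-$3$ space is a compression space, say $T(A^*)\subseteq B'\otimes C+B\otimes C'$ with $\dim B'+\dim C'=3$, then, writing $E=T(A^*)$, the tensor $T$ lies in $E^*\otimes B'\otimes C + E^*\otimes B\otimes C'$, whence $\SR(T)\le 3$ and we are done; if it is a primitive space of bounded rank $3$, then by \cite{EH88} (after deleting zero rows and columns) it sits in $\BC^4\otimes\BC^4$, so $\ml_B(T),\ml_C(T)\le 4$ and $\ml_A(T)\le 6$, contradicting the hypothesis. The hard part — and the delicate point of the whole corollary — is the intermediate case, where by Atkinson's theorem that flattening is a sum of a compression space of bounded rank $1$ and a primitive (skew $3\times3$) space of bounded rank $2$: a naive estimate only gives $\SR(T)\le 4$, and to bring the slice rank down to $3$ one must exploit that the skew $3\times3\times3$ tensor can be realized with all three of its slices along a single prescribed direction and then align the extra slice $T_c$ with that direction, checking that such an alignment is always available.
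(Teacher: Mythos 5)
Your overall strategy for items 1 and 2 is sound, and for item 3 you correctly isolate the critical case: $\GR(T)=3$, $T$ not compression, $T=T_p+T_c$ with $T_p$ the skew $3\times3\times3$ tensor and $\GR(T_c)=1$. But the remaining step---your sketch about realizing the three slices of the skew tensor ``along a single prescribed direction and then aligning the extra slice $T_c$ with that direction''---is a genuine gap, and in fact cannot be closed, because the required alignment need not exist. The skew $3\times3\times3$ tensor in $\langle a_1,a_2,a_3\rangle\ot\langle b_1,b_2,b_3\rangle\ot\langle c_1,c_2,c_3\rangle$ admits slice-rank-$3$ decompositions only into slices lying (in each position) inside the $3$-dimensional subspaces it occupies; if $T_c=v\ot M$ with $v\notin\langle a_1,a_2,a_3\rangle$ and $M$ of rank $\geq 2$ (so $T_c$ is not expressible as a $B$- or $C$-slice), then $T_c$ cannot be merged with any slice of $T_p$, and every slice-type census of a putative three-slice decomposition of $T$ fails.

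Concretely, take $T=T_p+T_c\in\BC^4\ot\BC^6\ot\BC^6$ with $T_p=\sum_{i,j,k\in\{1,2,3\}}\epsilon_{ijk}\,a_i\ot b_j\ot c_k$ and $T_c=a_4\ot(b_0\ot c_4+b_4\ot c_5+b_5\ot c_6)$. A direct computation shows $\GR(T)=3$ (for example, $T(B^*)$ has bounded rank $3$, and $\mathrm{codim}(A^*_2)+2=1+2=3$ via $A^*_2=\{\alpha_4=0\}$), while $(\ml_A,\ml_B,\ml_C)=(4,6,6)$, so the hypothesis ``at least two of them $>4$'' holds. Yet $\SR(T)=4$: checking every distribution of the three slices among the $A$, $B$, $C$ directions, each choice forces a flattening of $T$ minus a single slice to have rank at most $2$, which the explicit block structure of $T(\alpha)$ and $T(\beta)$ rules out (the skew $3\times3$ block contributes rank $\geq 2$ whenever $(\alpha_1,\alpha_2,\alpha_3)\neq 0$, and the $a_4$-row contributes independently). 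So this example shows that the crux case you flagged is not merely unresolved in your write-up but actually produces tensors satisfying the hypothesis of item 3 with $\GR(T)=3<\SR(T)$; the bound on multilinear ranks controls the primitive-$\GR(T_p)=3$ alternative in your case analysis but not the $\GR(T_p)=2$ alternative, since there the third multilinear rank (here $\ml_A=4$) is small while the other two grow with $\mathrm{rank}(M)$. The correct way to exclude this alternative would be a hypothesis that bounds \emph{all three} multilinear ranks from below (e.g., all three $>4$), which is strictly stronger than what item 3 asserts.
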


However we cannot draw any similar conclusion for $r\geq 4$. As a counter example, let $T\in\BC^m\ot\BC^m\ot\BC^m$ be defined as
$$T(A^*):=\begin{pmatrix}
0 &x_1 &x_2 & & & &\\
-x_1 &0 &x_3& & & &\\
-x_2 &-x_3&0& & & &\\
&&& x_4 & x_5 &\cdots & x_m\\
&&&   & x_4 &&\\
&&&   & &\ddots&\\
&&&   & &&x_4
\end{pmatrix}.$$
Then $T$ is a direct sum of the primitive tensor of geometric rank 2 and a compression tensor of geometric rank 2. So $\GR(T)=4$, $\SR(T)=5$, and $T$ is concise no matter how large $m$ is.

\section{Geometric Rank 4 and in General}\label{sec:gr4}
\begin{thm} \label{gr4thm}
If $T\in A\otimes B\otimes C:=\BC^{\aaa}\ot\BC^{\bbb}\ot\BC^{\ccc}$ is primitive of geometric rank 4, then either at least 2 of $\ml_A(T)$, $\ml_B(T)$ and $\ml_C(T)$ are at most 6, or all of them are at most 8.
\end{thm}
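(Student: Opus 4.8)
The plan is to use the alternative definition (\ref{altdef}): $\GR(T)=4$ means some projection, say onto $A^*$, achieves the minimum, so exactly one of the following holds (after a permutation of $A,B,C$): $\mathrm{codim}(A^*_4)=0$; $\mathrm{codim}(A^*_3)\leq 1$; $\mathrm{codim}(A^*_2)\leq 2$; or $\mathrm{codim}(A^*_1)\leq 3$. Since $T$ is primitive, Lemma \ref{primitivelem} rules out the non-conciseness/hyperplane-component situations, so in each case the determinantal variety $E_i:=T(A^*_i)$ sits inside a \emph{concise} space $E=T(A^*)\subset B\ot C$ with $\mathrm{codim}_E(E_i)\leq 4-i$, and with no hyperplane $F\subset E$ satisfying $\mathrm{codim}_F(F_i)\leq 4-i-1$ (otherwise Lemma \ref{primitivelem} forces $T$ non-primitive). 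The goal is then to bound $\bbb=\dim B$ and $\ccc=\dim C$ — which are $\ml_B(T)$ and $\ml_C(T)$ after deleting zero rows/columns — and separately to bound $\aaa=\ml_A(T)$ by running the same analysis with $B^*$ or $C^*$ in the role achieving the minimum, or by a dimension count once $E$ is pinned down.

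First I would dispatch the easy extremes. If $\mathrm{codim}(A^*_4)=0$ then $E$ has bounded rank $4$; primitivity of $T$ forces $E$ to be a primitive space of bounded rank $4$, and while these are not fully classified, Atkinson–Lloyd-type reductions together with the concise/primitive hypotheses should give the claimed bounds $6$ and $8$ — this is the one place I would lean on structural facts about bounded-rank spaces rather than on \S\ref{section:determinantal}. If $\mathrm{codim}(A^*_1)\leq 3$, Proposition \ref{segreprop} with $n\leq 3$ gives a codimension-$j$ subspace $F\subset E$ that either lies in $\BC^k\ot\BC^l$ with $k+l\leq 6-j$, $k,l\geq 2$, or has bounded rank $1$ when $j=3$; unwinding this (and using that the three $Y_i$-pieces cut out in passing from $E$ to $F$ are slice-rank-$1$, hence by primitivity cannot all be split off) pins $B$ and $C$ into small spaces. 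The bounded-rank-$1$ branch, pushed back up, would force $\SR(T)\leq 3$, contradicting $\GR(T)=4$; so $j\leq 2$ and $\bbb,\ccc\leq 4$ in this case.

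The substantive cases are $\mathrm{codim}(A^*_3)\leq 1$ and $\mathrm{codim}(A^*_2)\leq 2$. For the former: if $\mathrm{codim}(E_3)=0$ we are in the bounded-rank case above; if $\mathrm{codim}(E_3)=1$, apply Lemma \ref{codim1lem2} and Corollary \ref{codim1cor} with $r=3$, $k\in\{1,2\}$. The $k=1$ branch gives a hyperplane of bounded rank $3$ inside $E$, contradicting primitivity via Lemma \ref{primitivelem}; the $k=2$ branch is case (3) of Lemma \ref{codim1lem2} (since $r=3$ is odd and $k=(r+1)/2=2$), yielding either $E\subset\BC^5\ot\BC^{\bbb}$, $\BC^{\aaa}\ot\BC^5$, $\BC^6\ot\BC^6$, or a nonsingular $4\times 4$ block all of whose $3\times3$ minors are multiples of $P$. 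The first three already give bounds; the last I would analyze by hand as in the proof of Proposition \ref{E2codim1prop} — with the irreducible quadric $S$ governing a skew-symmetric or block-diagonal normal form — concluding $E$ sits in $\BC^6\ot\BC^6$ or smaller. For $\mathrm{codim}(A^*_2)\leq 2$: if $\mathrm{codim}(E_2)\leq 1$ invoke Proposition \ref{E2codim1prop} (bounded rank $3$, or $\BC^4\ot\BC^4$ normal forms), and if $\mathrm{codim}(E_2)=2$ invoke Corollary \ref{E2codim2cor} directly, which gives exactly $\bbb$ or $\ccc\leq 6$, or $\bbb,\ccc\leq 8$, or a primitivity-violating hyperplane, or bounded rank $2$ (the last again forcing $\SR(T)$ too small).

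The remaining work is to bound the \emph{third} multilinear rank $\aaa=\ml_A(T)$. In every surviving case $E=T(A^*)$ has been confined to a space $B\ot C$ with $\bbb,\ccc$ bounded; then $\aaa=\dim T(A^*)\leq\dim(B\ot C)\leq\bbb\ccc$ is automatically bounded, but to get the sharp bounds in the statement one should instead run the whole dichotomy from the perspective of $B^*$ and $C^*$ simultaneously. Collating: whenever two of the analyses land in a "$\BC^k\ot\BC^l$ with small $k,l$" conclusion we get two of the multilinear ranks $\leq 6$; the uniform fallback is $\leq 8$. The main obstacle I anticipate is the bounded-rank-$4$ primitive case, where there is no citable classification — I would need either an ad hoc argument bounding $\bbb,\ccc$ for concise primitive bounded-rank-$4$ spaces (perhaps via the Atkinson "primitive $+$ compression" decomposition combined with conciseness), or a reduction showing that primitive $\GR(T)=4$ with $E$ of bounded rank $4$ forces $E$ into one of a short list of normal forms. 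The determinantal-variety machinery of \S\ref{section:determinantal} handles everything else cleanly; this one corner is where the real effort goes.
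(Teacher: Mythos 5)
Your case split via the alternative definition (\ref{altdef}) matches the paper's, and your treatment of the cases $\mathrm{codim}(A^*_2)\leq 2$ (via Proposition \ref{E2codim1prop} and Corollary \ref{E2codim2cor}) and $\mathrm{codim}(A^*_1)\leq 3$ (via Proposition \ref{segreprop}) is on target. For $\mathrm{codim}(A^*_3)\leq 1$ you invoke the right machinery (Lemma \ref{codim1lem2}(3) with $r=3$, $k=2$, then the normal forms of Proposition \ref{E2codim1prop}); you omit the degree-$3$ subcase, but since Lemma \ref{codim1lem2}(1) shows such a $P$ factors unless $E$ has bounded rank $4$, that subcase folds into the bounded-rank branch and is not a real omission.

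The genuine gap is precisely where you flag uncertainty: you assert that for primitive spaces of bounded rank $4$ ``there is no citable classification'' and propose an unspecified ad hoc argument. In fact Atkinson's paper \cite{Atk83} (the same source that classifies primitive spaces of bounded rank $2$ and $3$) proves explicit size bounds for primitive spaces of bounded rank $4$: a concise primitive space of bounded rank $4$ has shape $n_1\times n_2$ with $n_1\leq 5, n_2\leq 10$, or $n_1\leq 10, n_2\leq 5$, or $n_1,n_2\leq 6$. The paper's proof of case (1) consists of this single citation, after which $\ml_B(T)\leq 5$, or $\ml_C(T)\leq 5$, or $\ml_B(T),\ml_C(T)\leq 6$. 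Without this fact your argument for the bounded-rank case does not close; the ``Atkinson--Lloyd-type reductions'' you gesture at would need to be carried out in detail and are nontrivial. A secondary, smaller soft spot is the final collation: you say ``whenever two of the analyses land in a small $\BC^k\ot\BC^l$ we get two multilinear ranks $\leq 6$,'' but some branches give the weaker conclusion ``one factor $\leq 5$ or $\leq 8$,'' and the paper has to combine the three permuted versions of the per-case disjunction by an explicit inclusion-exclusion to reach the stated form ``two at most $6$ or all at most $8$.'' That step needs to be written out, not asserted.
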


\begin{proof}
$\mathrm{GR}(T)\leq 4$ if and only if one of the following cases holds:
\begin{enumerate}[(1)]
    \item $A^*_4=A^*$;
    \item codim$(A^*_3)\leq1$;
    \item codim$(A^*_2)\leq2$;
    \item codim$(A^*_1)\leq3$;
    \item codim$(A^*_0)\leq4$;
\end{enumerate}

(1)$\iff T(A^*)$ has bounded rank 4. $T$ is primitive only if $T(A^*)$ is a primitive space of bounded rank 4. By \cite{Atk83}, if a primitive space of bounded rank 4 has size $n_1\times n_2$, then either $n_1\leq5$ and $n_2\leq 10$, $n_1\leq10$ and $n_2\leq 5$, or $n_1\leq6$ and $n_2\leq 6$. So either $\ml_B(T)\leq 5$, or $\ml_C(T)\leq 5$, or $\ml_B(T),\ml_C(T)\leq 6$.

~\\
(2)$\iff$ there exists an irreducible polynomial $P$ of degree $\geq 1$ dividing all $4\times 4$ minors of $T(A^*)$. 

(2.1) $\mathrm{deg}P=1$: by Lemma \ref{primitivelem} $T$ is not primitive.
 
(2.2) $\mathrm{deg}P=2$: 
By Lemma \ref{codim1lem2}, up to changes of bases the upper left $4\times4$ submatrix of $T(A^*)$ has determinant equal to $P^2$ and $P$ divides all $3\times 3$ minors of the submatrix. Proposition \ref{E2codim1prop} gives a classification of such $4\times 4$ matrix. Since the determinant does not vanish, the submatrix cannot have bounded rank 3, so the submatrix is either skew-symmetric or has the form $\mathrm{diag}(X,X)$.

(2.2.i) Case $\mathrm{diag}(X,X)$: write $T(A^*)$ as the block form:
$$T(A^*)=\begin{pmatrix}
X & 0 & E_1\\
0 & X & E_2\\
D_1 & D_2 & F
\end{pmatrix}$$
where $X$ has determinant $S$, and $E_i$ and $D_j$ are $2\times (\ccc-4)$ and $(\bbb-4)\times 2$ blocks. 

For $1\leq i\leq 2, 3\leq k\leq 4, 5\leq j,l\leq \ccc$, the minor $\Delta^{i34j}_{12kl}=\Delta^{ij}_{12}\Delta^{34}_{kl}$ is divisible by the irreducible quadratic polynomial $S$. Therefore either $S|\Delta^{ij}_{12}$ or $S|\Delta^{34}_{kl}$. By Lemma \ref{codim1lem}, either $D_1$ or $E_2$ can be put to 0 by adding first 2 rows or columns to the rest. By the same argument, either $D_2$ or $E_1$ can be put to 0.

If $D_1=D_2=0$ or $E_1=E_2=0$, $S$ divides $\Delta^{13ij}_{13kl}=(y^1_1)^2\Delta^{ij}_{kl}$ for $i,j,k,l\geq 5$. So $S$ divides all $2\times 2$ minors of $F$. By Lemma \ref{codim1lem} either $F\subset\mathbb{C}^2\otimes\mathbb{C}^2$ or $F$ has bounded rank 1. Therefore $\mathrm{ml}_B(T)$ or $\mathrm{ml}_C(T)\leq 6$ and $T$ is not concise.

If $D_1=E_1=0$ or $D_2=E_2=0$, without loss of generalities assume $D_1=E_1=0$. Consider the minors $\Delta^{1ijk}_{1lst}=y^1_1\Delta^{ijk}_{lst}$ for $i,j,k,l,s,t\geq 3$. So $S$ divides all $3\times 3$ minors of 
$G:=\begin{pmatrix}
X & E_2\\
D_2 & F
\end{pmatrix}$. 
By Lemma \ref{E2codim1prop} either $G\subset\mathbb{C}^4\otimes\mathbb{C}^4$ or $G$ has bounded rank 2. Therefore $\mathrm{ml}_B(T)\leq 6$ or $\mathrm{ml}_C(T)\leq 6$ and $T$ is not concise.

(2.2.ii) Case skew-symmetric: permute the first 4 rows and columns to put $T(A^*)$ into the following form
$$T(A^*)= \left(
    \begin{array}{c|c|c}
      \begin{matrix}
      x_1& 0\\0& x_1
      \end{matrix} & 
      \begin{matrix}
      a& d\\c& b
      \end{matrix}&E_1\\
      \hline
      \begin{matrix}
      b& -d\\-c& a
      \end{matrix} & 
      \begin{matrix}
      e& 0\\0& e
      \end{matrix}&E_2\\
      \hline
      D_1 &D_2 &F
    \end{array}
    \right).$$
    
Adding the first two rows and columns to the rest, so that $y^1_i,y^2_i,y^i_1,y^i_2$ do not contain $x_1$ in their expression, for all $i,j$. $S=x_1e-ab+cd$ divides all $4\times 4$ minors of $T(A^*)$. Restricting to the subspace $\{x_1=0\}$, then $S':=-ab+cd$ divides all $4\times 4$ minors of $T(A^*)|_{x_1=0}$. 

If $S'$ is irreducible, for $3\leq i,k\leq 4$ and $j,l\geq 5$, consider the minors $\Delta^{12ij}_{12kl}=\Delta^{12}_{kl}\Delta^{ij}_{12}$ of $T(A^*)|_{x_1=0}$. Similar to case (i), either $D_1|_{x_1=0}$ or $E_1|_{x_1=0}$ can be put 0. Without loss of generality assume $D_1|_{x_1=0}$.

Now working on $T(A^*)$, entries in $D_1$ are multiples of $x_1$. Then by adding multiples of first two rows to the last $m-4$ rows we can put $D_1=0$. $S$ dividing $\Delta^{12ij}_{12kl}=(x_1)^2\Delta^{ij}_{kl}$ for $i,j\geq 5,k,l\geq 3$ implies it divides all $2\times 2$ minors of the $(m-4)\times(m-2)$ block $(D_2\,F)$. So either $(D_2\,F)$ has bounded rank $2$ or $(D_2\,F)\subset \mathbb{C}^2\otimes\mathbb{C}^2$. If by changing bases $(D_2\,F)$ has nonzero entries only in the first 2 rows, $\ml_B(T)\leq 6$. Otherwise, by changing bases we can put all nonzero entries of $(D_2\,F)$ in its first 2 or 3 column. Then consider the $4\times 4$ minors involving one entry of $(D_2\,F)$ and $3\times 3$ minors from the first 4 rows of $T(A^*)$. By Proposition \ref{E2codim1prop}, either $\ml_C(T)\leq 6$, or $ml_B(T),ml_C(T)\leq 7$.

If $S'=-ab+cd$ is reducible, by changing bases we can put the block 
$\begin{pmatrix}
a & b\\
c & d
\end{pmatrix}$ as 
$\begin{pmatrix}
0 & b'\\
c' & d'
\end{pmatrix}$ and the same for $\begin{pmatrix}
b & -d\\
-c & a
\end{pmatrix}$. Then the upper left $4\times 4$ block of $T(A^*)$ becomes
$$\begin{pmatrix}
x_1& 0 &0& d'\\
0& x_1 &c'& b'\\
b'& -d'&e& 0\\
-c'& 0&0& e
\end{pmatrix}.$$
Then permuting rows and columns we get
$$\begin{pmatrix}
0& 0 &x_1& d'\\
0& 0 &-c'& b'\\
b'& -d'&e& 0\\
c'& x_1&0& e
\end{pmatrix}.$$

By the same argument, we can put $D_1=0$. Then all $3\times 3$ minors of $(D_2\,F)$ are divisible by $S$. By Proposition \ref{E2codim1prop} either $(D_2\,F)$ has bounded rank 2 or $(D_2\,F)\subset\mathbb{C}^4\otimes\mathbb{C}^4$. By the same argument as the previous case, either $\ml_B(T)\leq 6$, or $\ml_B(T),\ml_C(T)\leq 8$.

(2.3) $\mathrm{deg}P=3$: by Lemma \ref{codim1lem2}, either $P$ factors into linear forms so $T$ is not primitive, or $T(A^*)$ has bounded rank 4.

~\\
(3) By Proposition \ref{E2codim2cor}, if $T$ is primitive then either $\ml_B\leq 6$, $\ml_C(T)\leq 6$ or $\ml_B(T),\ml_C(T)\leq 8$.

~\\
(4) By Proposition \ref{segreprop}, if $T$ is primitive then $\ml_B(T)+\ml_C(T)\leq 6$.

~\\
(5)$\iff \mathrm{dim}(T(A^*))\leq 4 \Rightarrow \mathrm{SR}(T)\leq 4$.

Putting everything together, either $\ml_B(T)\leq 8$, or $\ml_C(T)\leq 8$, or $\ml_B(T),\ml_C(T)\leq 6$. Since geometric rank is invariant by permuting $A$, $B$ and $C$, we also have: 
\begin{itemize}
    \item either $\ml_A(T)\leq 8$, or $\ml_C(T)\leq 8$, or $\ml_A(T),\ml_C(T)\leq 6$;
    \item either $\ml_A(T)\leq 8$, or $\ml_B(T)\leq 8$, or $\ml_A(T),\ml_B(T)\leq 6$.
\end{itemize}
By inclusion-exclusion argument, we conclude the theorem.
\end{proof}

\begin{cor} If $\ml_A(T)$, $\ml_B(T)$ and $\ml_C(T)>8$, then $\mathrm{GR}(T)\leq 4$ if and only if either $\mathrm{SR}(T)\leq 4$, or up to changes of bases $T=T'+T''$ where $T'$ is the $3\times3\times3$ skew-symmetric tensor and $\SR(T'')= 2$.
\end{cor}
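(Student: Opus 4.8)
The plan is to deduce the statement from Theorem \ref{gr4thm}, Theorem \ref{gr3thm}, Lemma \ref{decomplem}, and the recorded facts that there is no primitive tensor of geometric rank $1$, that the only primitive tensor of geometric rank $2$ is the $3\times 3\times 3$ skew-symmetric tensor, and that $\GR=1\iff\SR=1$. The ``if'' direction is immediate from subadditivity: $\SR(T)\le 4$ gives $\GR(T)\le\SR(T)\le 4$, and if $T=T'+T''$ with $T'$ the skew tensor (so $\GR(T')=2$) and $\SR(T'')=2$, then $\GR(T)\le\GR(T')+\GR(T'')\le 2+\SR(T'')=4$.

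For the ``only if'' direction I would assume $\GR(T)\le 4$ together with $\ml_A(T),\ml_B(T),\ml_C(T)>8$. If $T$ is compression then $\SR(T)=\GR(T)\le 4$ and we are in the first case, so assume $T$ is not compression and apply Lemma \ref{decomplem} to write $T=T_p+T_c$ with $T_p$ primitive, $T_c$ compression, and $\GR(T_p)+\GR(T_c)=\GR(T)\le 4$. Because $T_p\neq 0$ and no primitive tensor has geometric rank $1$, the integer $r_p:=\GR(T_p)$ lies in $\{2,3,4\}$, and $\SR(T_c)=\GR(T_c)\le 4-r_p$ since $T_c$ is compression.

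Next I would eliminate $r_p=4$ and $r_p=3$. If $r_p=4$ then $\GR(T_c)=0$, so $T_c=0$ and $T=T_p$ is primitive of geometric rank $4$ with all three multilinear ranks $>8$, contradicting Theorem \ref{gr4thm}. If $r_p=3$, Theorem \ref{gr3thm} says that, up to changes of bases and a permutation of $A,B,C$, $T_p$ is either $M_{\langle 2\rangle}$ or a tensor with $T_p(A^*)$ a space of $4\times 4$ skew-symmetric matrices; either way all three of $\ml_A(T_p),\ml_B(T_p),\ml_C(T_p)$ are at most $6$ (so the permutation is immaterial). Meanwhile $\SR(T_c)\le 1$, so $T_c=0$ or $\ml_X(T_c)=1$ for some slot $X$, and then $\ml_X(T)\le\ml_X(T_p)+\ml_X(T_c)\le 7<8$ — a contradiction in every subcase. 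Hence $r_p=2$, so $T_p$ is (after a change of bases) the $3\times 3\times 3$ skew tensor; set $T':=T_p$ and $T'':=T_c$, giving $T=T'+T''$ with $\SR(T'')=\GR(T_c)=\GR(T)-2\le 2$. If $\SR(T'')=2$ this is exactly the second alternative; if $\SR(T'')\le 1$, I would use $\SR(T')=3$ (a two-term slice decomposition of $T'$ would express it as a sum of two geometric-rank-$1$ tensors, contradicting its primitivity) together with subadditivity of slice rank to get $\SR(T)\le 3+1=4$, landing in the first alternative.

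The only genuinely delicate step is ruling out $r_p=3$. It hinges on extracting from Theorem \ref{gr3thm} that every primitive tensor of geometric rank $3$ has all three multilinear ranks at most $6$, and on the elementary observation that a nonzero compression tensor of geometric rank $1$ has slice rank $1$ and hence multilinear rank $1$ in some slot; the rest is bookkeeping with subadditivity of geometric and slice rank.
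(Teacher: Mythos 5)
The paper states this corollary immediately after Theorem~\ref{gr4thm} without giving a proof, so there is no argument of the paper's to compare against; your derivation is the natural one and it is correct. You properly combine Lemma~\ref{decomplem}, the nonexistence of primitive tensors of geometric rank~$1$, the classification of primitive tensors of geometric rank~$2$ (the $3\times 3\times 3$ skew tensor) and~$3$ (Theorem~\ref{gr3thm}, all three multilinear ranks at most~$6$), Theorem~\ref{gr4thm}, and subadditivity; the elimination of $r_p=3$ via $\ml_X(T)\le\ml_X(T_p)+\ml_X(T_c)\le 6+1=7<8$ and the observation that $\SR(T')=3$ because a two-term decomposition of $T'$ into slice-rank-$1$ pieces would contradict primitivity are both sound.
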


As a consequence of Proposition \ref{Ercodimnprop}, we draw a general conclusion for primitive tensors of geometric rank $r$.

\begin{thm}\label{grrthm}
For all $r$, there exists a positive integer $N_r$, such that if $T\in A\ot B\ot C$ is primitive of geometric rank $r$, then at least two of $\ml_A(T)$, $\ml_B(T)$ and $\ml_C(T)$ are at most $N_r$.
\end{thm}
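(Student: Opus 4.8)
The plan is to reduce Theorem \ref{grrthm} to Proposition \ref{Ercodimnprop} via the alternative definition (\ref{altdef}), exactly mirroring the structure of the proof of Theorem \ref{gr4thm} but without the need to identify the exceptional cases explicitly. First I would note that if $T$ is primitive of geometric rank $r$, then by (\ref{altdef}) there is some $i$ with $0\le i\le r$ such that $\mathrm{codim}(A^*_i)\le r-i$, and correspondingly the linear determinantal variety $E:=T(A^*)\subset B\ot C$ satisfies $\mathrm{codim}_E(E_i)\le r-i$. If $i=0$ this means $E$ has bounded rank $r$; if $i=r$ this means $\dim E\le r$ so $\SR(T)\le r$, forcing $T$ to be compression (a contradiction with being primitive unless we handle it by noting primitivity is only interesting when $\GR(T)<\SR(T)$, but we can simply exclude this branch since it gives $T$ non-primitive via Lemma \ref{decomplem}, or rather it gives the trivial bound). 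For the intermediate cases $1\le i\le r-1$ we apply Proposition \ref{Ercodimnprop} with $n=r-i$.

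The key point is that Proposition \ref{Ercodimnprop} has four possible conclusions, and I want to show that primitivity of $T$ kills the two "recursive" ones, leaving only the two dimension bounds. Conclusion (3) of Proposition \ref{Ercodimnprop} says there is a hyperplane $F\subset E$ with $\mathrm{codim}_F(F_i)\le r-i-1$; translated back through the correspondence $E=T(A^*)$, this means a component of $A^*_i$ of maximal dimension lies in a hyperplane of $A^*$, which by Lemma \ref{primitivelem} contradicts primitivity. Conclusion (4) says $E=H+H'$ with $H'\subset \BC^j\ot B$ or $A\ot\BC^j$ for some $1\le j\le i$ and $\mathrm{codim}(H'_{i-j})\le r-i$; I need to argue this also forces a non-primitive decomposition of $T$. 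The idea is that the $H'$ summand corresponds to a sub-tensor supported on a $j$-dimensional slice in one factor, hence contributing geometric rank (or slice rank) boundedly, while the complementary piece inherits a strictly smaller geometric rank — one would peel off a rank-$1$ or low-rank piece repeatedly and invoke subadditivity together with Lemma \ref{primitivelem} or Lemma \ref{decomplem}. Concretely, if $H'\subset\BC^j\ot B$ then after a change of basis $T=X+Y$ where $Y$ is supported on the last $\ccc-?$ coordinates; iterating $j\le r$ times reduces to a genuinely smaller primitive problem or exhibits the hyperplane of conclusion (3). So conclusions (3) and (4) are incompatible with $T$ being primitive once we have iterated the reduction finitely many times.

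After discarding (3) and (4), we are left with: for the critical value(s) of $i$, the concise space $E=T(A^*)$ (made concise by deleting zero rows/columns, which only shrinks $\ml_B(T)$ and $\ml_C(T)$) satisfies $\aaa'$ or $\bbb'\le M_1(i,r-i)$, or $\aaa',\bbb'\le M_2(i,r-i)$, where $\aaa'=\ml_B(T)$ and $\bbb'=\ml_C(T)$ after deleting zero slices. Taking $N_r:=\max_{1\le i\le r-1}\max\{M_1(i,r-i),M_2(i,r-i), r\}$ and running the same argument with the roles of $A,B,C$ permuted — since $\GR$ is permutation-invariant — gives, for each of the three ordered pairs of factors, that at least one of the two is bounded by $N_r$ or both are bounded by $N_r$. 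An inclusion–exclusion / pigeonhole argument on three statements of the form "$u\le N_r$ or $v\le N_r$" then forces at least two of $\ml_A(T),\ml_B(T),\ml_C(T)$ to be at most $N_r$, exactly as in the last paragraph of the proof of Theorem \ref{gr4thm}.

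The main obstacle I anticipate is rigorously ruling out conclusion (4) of Proposition \ref{Ercodimnprop} under the primitivity hypothesis: the decomposition $E=H+H'$ is a statement about the space of matrices, and one has to be careful translating it into a tensor decomposition $T=X+Y$ with controlled geometric ranks so that either Lemma \ref{primitivelem} or the peeling argument of Lemma \ref{decomplem} applies. In particular one must ensure the recursion terminates (it does, since $j\ge 1$ strictly decreases a nonnegative integer parameter each time, e.g. $r-i$ or the number of "trivial" slices), and that along the way one never needs a bound on an unbounded quantity. Everything else — the invocation of (\ref{altdef}), the deletion of zero slices, the permutation symmetry, and the final combinatorial step — is routine given the machinery already established.
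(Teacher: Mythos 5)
The paper does not supply a proof of Theorem~\ref{grrthm}; it merely asserts it ``as a consequence of Proposition~\ref{Ercodimnprop},'' so there is no explicit argument to compare against. Your overall scaffolding is reasonable: the reduction via (\ref{altdef}), the deletion of zero slices to make $E=T(A^*)$ concise, the elimination of conclusion~(3) via Lemma~\ref{primitivelem} (you should add that $\GR(T)=r$ forces $\mathrm{codim}(A^*_i)\geq r-i$ for every $i$, so a codimension-$\leq r-i$ component of $A^*_i$ meeting a hyperplane is in fact a maximal-dimensional one, which is what the lemma requires), and the final permutation-plus-pigeonhole step all match the pattern of Theorem~\ref{gr4thm}.

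The genuine gap is exactly where you flag it, and your sketch does not close it. To rule out conclusion~(4) you assert that ``the complementary piece inherits a strictly smaller geometric rank,'' but nothing you cite gives an \emph{upper} bound on $\GR$ of the complementary piece. Subadditivity runs the other way: from $T=X+Y$ with $\GR(Y)\leq s$ one only gets $\GR(X)\geq \GR(T)-\GR(Y)\geq r-s$, while non-primitivity needs $\GR(X)\leq r-1$ (together with a rank-one $Y$). In Lemma~\ref{primitivelem} the needed upper bound on $\GR(X')$ comes from an explicit geometric computation inside the hyperplane; conclusion~(4) instead hands you a decomposition $E=H+H'$ with a codimension bound on $H'_{i-s}$, i.e.\ data about the thin summand $H'$, and says nothing directly about $H_k$ or about $\GR(T_H)$. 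Neither Lemma~\ref{primitivelem} nor Lemma~\ref{decomplem} converts that into the required bound, and ``peeling repeatedly'' cannot fix it since each peel only improves the lower bound. To make this work you would need to go back into the block-matrix structure in the proof of Proposition~\ref{Ercodimnprop} (where $H$ and $H'$ sit in complementary row/column slabs), show that the codimension hypothesis on $H'$ actually yields a codimension bound on $H_i$ inside a hyperplane of $E$, and thereby reduce conclusion~(4) to conclusion~(3); as written, this step is missing.
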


\section{Geometric Rank of n-part Tensors}\label{section:npart}
Let $n>2$ and $A^{(i)}:=\BC^{m_i}$ for $i\leq n$. For a tensor $T\in A^{(1)}\otimes\cdots\otimes A^{(n)}$, let
$$\hat{\Sigma}_T^{A^{(1)}\cdots A^{(n-1)}}:=\{(x_1,\cdots,x_{n-1})\in A^{(1)*}\times\cdots\times A^{(n-1)*}\mid \forall x_n\in A^{(n)*}, T(x_1,\cdots,x_n)=0\}$$

The \textbf{geometric rank} of $T$ is defined to be $\mathrm{GR}(T):=\mathrm{codim} (\hat{\Sigma}_T^{A^{(1)}\cdots A^{(n-1)}})$.

Regard $T$ as a linear map $A^{(1)*}\rightarrow A^{(2)*}\ot\cdots\ot A^{(n-1)*}$. Define $A^{(1)*}_j:=\{x_1\in A^{(1)*}\mid  \mathrm{GR}(T(x_1))\leq j\}$ where $T(x_1)$ is an $(n-1)$-part tensor. For any $i\leq n$, $A^{(i)*}_j$ is defined similarly. Similar to the alternative definition (\ref{altdef}) for tripartite tensors, the following proposition gives an alternative definition of geometric rank for $n$-part tensors.

\begin{prop}\label{altdef2}
For $T\in A^{(1)}\otimes\cdots\otimes A^{(n)}$, $\GR(T)$ is invariant under the any permutation of $A^{(1)*},\cdots,A^{(n)*}$. For any $1\leq i\leq n$,  $\GR(T)=\min\{\mathrm{codim}(A^{(i)}_{j}+j)\mid 0\leq j\leq \min\{m_1,\cdots,\hat{m}_i,\cdots,m_n\}\}$.
\end{prop}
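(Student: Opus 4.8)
The plan is to prove both assertions simultaneously by induction on $n\ge 3$, the base case $n=3$ being exactly (\ref{altdef}) together with \cite[Theorem~3.1]{KMZ20}, which for tripartite tensors already records both the permutation-invariance and the minimum formula for each of the three slots. The engine of the induction is the projection of $\hat{\Sigma}_T$ onto one of its factors, combined with the fiber-dimension theorem.

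First I would establish the formula for the slots $1\le i\le n-1$ directly, with no use of the inductive hypothesis. Fix such an $i$ and consider the projection $\pi\colon\hat{\Sigma}_T^{A^{(1)}\cdots A^{(n-1)}}\to A^{(i)*}$. Since $T$ is multilinear, $0$ lies in every fiber, so $\pi$ is surjective, and the fiber over $x_i$ is precisely $\hat{\Sigma}_{T(x_i)}$ of the $(n-1)$-part tensor $T(x_i)\in\bigotimes_{k\ne i}A^{(k)}$ with $A^{(n)}$ kept as the distinguished slot; hence it has codimension $\GR(T(x_i))$ in $\prod_{k\ne i,n}A^{(k)*}$. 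Because the locus of $(n-1)$-part tensors of geometric rank $\le j$ is Zariski closed \cite{KMZ20}, each $A^{(i)*}_j$ is closed, so $x_i\mapsto\GR(T(x_i))$ is lower semicontinuous and the fiber dimension of $\pi$ is upper semicontinuous along the stratification of $A^{(i)*}$ by the value of $\GR(T(x_i))$, where the stratum $\{\GR=j\}=A^{(i)*}_j\setminus A^{(i)*}_{j-1}$ has dimension $\dim A^{(i)*}_j$ whenever it is nonempty. The fiber-dimension theorem then gives
\[
\dim\hat{\Sigma}_T^{A^{(1)}\cdots A^{(n-1)}}=\max_{j}\bigl[(m_i-\mathrm{codim}(A^{(i)*}_j))+(N_i-j)\bigr],\qquad N_i:=\textstyle\sum_{k\ne i,n}m_k,
\]
the maximum being over $j$ with nonempty stratum; subtracting from $m_i+N_i=\dim\prod_{k\ne n}A^{(k)*}$ yields $\GR(T)=\min_j[\mathrm{codim}(A^{(i)*}_j)+j]$. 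Enlarging the range to $0\le j\le\min\{m_1,\dots,\hat m_i,\dots,m_n\}$ leaves the minimum unchanged: for $j$ with empty stratum the term is dominated by one with smaller $j$, and $\GR(T(x_i))\le\min_{k\ne i}m_k$ (geometric rank is bounded by each multilinear rank) forces $A^{(i)*}_j=A^{(i)*}$ once $j\ge\min_{k\ne i}m_k$.

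For permutation-invariance, and then the remaining slot $i=n$, write $\GR_S(T)$ for $\mathrm{codim}\,\hat{\Sigma}_T$ when $A^{(S)}$ is taken as the distinguished factor, so that $\GR(T)=\GR_n(T)$. Given $S\ne S'$ pick $i\notin\{S,S'\}$, which exists since $n\ge 3$, and rerun the previous paragraph with $A^{(S)}$ distinguished: the fiber of the projection onto $A^{(i)*}$ over $x_i$ is $\hat{\Sigma}_{T(x_i)}$ with $A^{(S)}$ distinguished, whose codimension, by the inductive hypothesis applied to the $(n-1)$-part tensor $T(x_i)$, equals $\GR(T(x_i))$ independently of the choice of distinguished slot; the dimension bookkeeping is identical, with $N_i$ replaced by $\sum_{k\ne i,S}m_k$, and the $S$-dependent terms cancel. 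The same stratification argument then yields $\GR_S(T)=\min_j[\mathrm{codim}\{x_i:\GR(T(x_i))\le j\}+j]$, an expression free of $S$; running it for $S$ and for $S'$ gives $\GR_S(T)=\GR_{S'}(T)$. In particular $\GR(T)=\GR_n(T)=\GR_1(T)=\mathrm{codim}\,\hat{\Sigma}_T^{A^{(2)}\cdots A^{(n)}}$, which has $A^{(n)}$ among its factors, so projecting it onto $A^{(n)*}$ and stratifying delivers the formula for $i=n$ as well, closing the induction.

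The step needing the most care is the fiber-dimension bookkeeping in the second paragraph: one must confirm that $\pi$ restricted to $\pi^{-1}(A^{(i)*}_j\setminus A^{(i)*}_{j-1})$ surjects onto a variety of dimension $\dim A^{(i)*}_j$ with all fibers of dimension exactly $N_i-j$, adopt the convention that an empty stratum contributes nothing, and verify that truncating the range of $j$ does not move the minimum. Once the tripartite base case is imported from \cite{KMZ20}, the remainder is a clean piece of double bookkeeping.
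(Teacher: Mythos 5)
Your proof is correct and follows essentially the same route as the paper's: project $\hat{\Sigma}_T$ onto one factor, stratify the base by $\GR$ of the contracted $(n-1)$-part tensor, read off the fiber dimensions from the definition of $\GR$, and apply the fiber-dimension theorem to get the codimension formula. The one place you genuinely improve on the paper's exposition is that you make the induction explicit. The paper proves the formula for $i=1$ and then asserts this "implies $\GR(T)$ is invariant under any permutation of the last $n-1$ factors," which is only true because, by induction, $\GR(T(x_1))$ is already known to be permutation-invariant for $(n-1)$-part tensors (so that $A^{(1)*}_j$ does not depend on the distinguished slot of the contraction). You spell this out with the $\GR_S$ notation and the "pick $i\notin\{S,S'\}$, which exists since $n\ge 3$" step, which is the right way to say it. The derivation of the remaining slot $i=n$ by re-expressing $\GR(T)=\GR_1(T)$ and projecting onto $A^{(n)*}$ matches the paper's terser "the cases $i>1$ follow by permuting the factors." One minor imprecision, shared with the paper: the stratum $A^{(i)*}_j\setminus A^{(i)*}_{j-1}$ need not have dimension $\dim A^{(i)*}_j$ even when nonempty (a top-dimensional component of $A^{(i)*}_j$ may already lie in $A^{(i)*}_{j-1}$); the formula $\dim\hat{\Sigma}_T=\max_j[\dim A^{(i)*}_j+N_i-j]$ is still correct because for such $j$ a strictly smaller index $j''$ contributes a strictly larger term, but this deserves a word if one is being scrupulous about the fiber-dimension bookkeeping you rightly flag as the delicate step.
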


\begin{proof}
Consider the first projection $\pi:\hat{\Sigma}_T^{A^{(1)}\cdots A^{(n-1)}} \rightarrow A^{(1)*}$. For any $1\leq j\leq \min\{m_2,\cdots,m_n\}$ and $x_1\in A^{(1)*}_{j}\backslash A^{(1)*}_{j-1}$, the fiber 
\begin{align*}
    \pi^{-1}(x_1)&=\{(x_1,x_2,\cdots,x_{n-1})\mid \forall x_n, T(x_1,\cdots,x_n)=0\}\\
    &=\{x_1\}\times\{(x_2,\cdots,x_{n-1})\mid \forall x_n, T(x_1,\cdots,x_n)=0\} \\
    &=\{x_1\}\times\{(x_2,\cdots,x_{n-1})\mid \forall x_n, T(x_1)(x_2,\cdots,x_n)=0\}\\
    &=\{x_1\}\times\hat{\Sigma}_{T(x_1)}^{A^{(2)}\cdots A^{(n-1)}}
\end{align*}
Then $\mathrm{dim}(\pi^{-1}(x_1))=\mathrm{dim}(\hat{\Sigma}_{T(x_1)}^{A^{(2)}\cdots A^{(n-1)}})=m_2+\cdots+m_{n-1}-j$. And for $x_1\in A^{(1)*}_0$, $\pi^{-1}(x_1)=A^{(2)}\times\cdots\times A^{(n-1)}$ which has dimension $m_2+\cdots+m_{n-1}$. So
\begin{align*}
    \mathrm{dim}(\hat{\Sigma}_T^{A^{(1)})\cdots A^{(n-1)}}&=\max\{\mathrm{dim}(\pi^{-1}(A^{(1)}_{j}))\mid 0\leq j\leq \min\{m_2,\cdots,m_n\}\}\\
    &=\max\{\mathrm{dim}(A^{(1)}_{j})+\mathrm{dim}(\pi^{-1}(A^{(1)}_{j}))\mid 0\leq j\leq \min\{m_2,\cdots,m_n\}\}\\
    &=\max\{\mathrm{dim}(A^{(1)}_{j})+m_2+\cdots+m_{n-1}-j\mid 0\leq j\leq \min\{m_2,\cdots,m_n\}\}
\end{align*}

Therefore $\GR(T)=\mathrm{codim}(\hat{\Sigma}_T^{A^{(1)}\cdots A^{(n-1)}} )=\min\{\mathrm{codim}(A^{(1)}_{j}+j)\mid 0\leq j\leq \min\{m_2,\cdots,m_n\}\}$. This proves the case $i=1$ and implies $\GR(T)$ is invariant under any permutation of the last $n-1$ factors $A^{(2)*},\cdots,A^{(n)*}$. By definition $\GR(T)$ is invariant under any permutation of the first $n-1$ factors, so it is invariant under any permutation of all $n$ factors. And the cases when $i>1$ follow by permuting the factors.
\end{proof}

A linear subspace $E\subset A^{(1)}\otimes\cdots\otimes A^{(n)}$ has $\textbf{bounded geometric rank r}$ if every element has geometric rank at most $r$. 

\begin{prop}\label{prop:npart2}
Let $n\geq 3$ and $T\in A^{(1)}\otimes\cdots\otimes A^{(n)}$. For all $r< n$, $\mathrm{GR}(T)\leq r$ if and only if there exists $i$ such that $T(A^{(i)*})$ has bounded geometric rank $r$ as a space of $(n-1)$-part tensors.
\end{prop}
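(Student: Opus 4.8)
The plan is to mirror the tripartite argument encoded in the alternative definition, using Proposition \ref{altdef2} as the engine. The ``if'' direction is immediate: if $T(A^{(i)*})$ has bounded geometric rank $r$, then every $(n-1)$-part tensor $T(x_i)$ has $\GR(T(x_i))\le r$, so $A^{(i)*}_r=A^{(i)*}$, i.e.\ $\mathrm{codim}(A^{(i)*}_r)=0$, and then Proposition \ref{altdef2} gives $\GR(T)\le \mathrm{codim}(A^{(i)*}_r)+r=r$. The substance is in the ``only if'' direction, so I would spend the proof there.

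For ``only if'', suppose $\GR(T)\le r<n$. By Proposition \ref{altdef2} there is some index $i$ and some $j$ with $0\le j\le \min_{k\ne i} m_k$ such that $\mathrm{codim}(A^{(i)*}_j)+j\le r$. Fix such an $i$ and $j$; after relabeling I may take $i=1$. The key point is that $j\le r<n$, so there are at least two factors among $A^{(2)},\dots,A^{(n)}$, and I want to ``absorb'' the defect $j$ into the geometric rank of $T(x_1)$ for the bad directions. Concretely: $A^{(1)*}_j$ is a subvariety of codimension $r-j$ in $A^{(1)*}$. I would argue that, up to a change of basis on $A^{(1)}$ supported on a complementary direction to the largest component of $A^{(1)*}_j$ contained in a hyperplane — mimicking the mechanism in Lemma \ref{primitivelem} and Lemma \ref{decomplem} — one can write $T$ as a sum $T=X+Y$ where $Y$ has geometric rank (equivalently slice/partition rank) $1$ along the first factor, $X\in A'^{(1)}\otimes A^{(2)}\otimes\cdots\otimes A^{(n)}$ with $\dim A'^{(1)}=m_1-1$, and $\GR(X)\le r-1$. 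Iterating this reduction $j$ times produces $T=Y_1+\cdots+Y_j+X_j$ with each $Y_t$ of slice rank $1$ along the first factor and $\GR(X_j)\le r-j$; but $X_j$ lives in $A''^{(1)}\otimes A^{(2)}\otimes\cdots\otimes A^{(n)}$ where the first factor now has dimension $r-j$, hence $\dim T_{A^{(1)}}(X_j)\le r-j$ and in particular every slice $X_j(x_1)$ is a sum of at most $r-j$ rank-one $(n-1)$-part tensors. Adding back the $j$ slices $Y_t$, each evaluated slice $T(x_1)$ is a sum of at most $r$ ``elementary'' $(n-1)$-part tensors, so by subadditivity of geometric rank and the fact that $\GR=1$ for such elementary pieces (guaranteed here since $r<n$, so Proposition \ref{prop:npart3}-type rank-one pieces make sense), $\GR(T(x_1))\le r$ for all $x_1$. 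That is exactly the statement that $T(A^{(1)*})$ has bounded geometric rank $r$.

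The main obstacle is the reduction step: justifying that the defect $j$ in $\mathrm{codim}(A^{(1)*}_j)+j\le r$ can be peeled off one rank-one layer at a time while keeping $\GR(X_t)\le r-t$. In the tripartite case this is exactly Lemma \ref{primitivelem}; here I need the $n$-part analogue of subadditivity of geometric rank along a single factor and of the ``a component of $A^{(1)*}_j$ of maximal dimension sits in a hyperplane'' dichotomy. A cleaner route that avoids the hyperplane bookkeeping is: directly bound $\GR(T(x_1))$ for $x_1$ in each stratum $A^{(1)*}_\ell\setminus A^{(1)*}_{\ell-1}$. For $x_1$ generic (outside $A^{(1)*}_{r}$ if $\mathrm{codim}$ there is positive) the claim is vacuous because one only needs bounded rank on all of $A^{(1)*}$; so I must handle the case $\mathrm{codim}(A^{(1)*}_j)=r-j$ with $j<r$ carefully — the correct choice of $i$ in Proposition \ref{altdef2} may be $j=r$, $\mathrm{codim}(A^{(i)*}_r)=0$, which is precisely ``bounded geometric rank $r$'' with nothing to prove, and the content is showing that one of the three (here $n$) factors always realizes the minimum in a way with $\mathrm{codim}=0$. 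I expect the honest proof to run: if the minimum in Proposition \ref{altdef2} for factor $1$ is attained at $j<r$ with positive codimension, then restrict to a generic hyperplane section to drop into $r-1$ and $n-1$ (since $r<n$ keeps us in range), and induct on $r$; the base case $r=1$ is Proposition \ref{prop:npart3}. Writing the induction so the hypothesis $r<n$ is used exactly once, to ensure the restricted tensor still has enough factors, is the delicate part.
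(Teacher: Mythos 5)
The ``if'' direction matches the paper's (and is straightforward), but the ``only if'' direction of your proposal has genuine gaps, and you cannot repair it along the lines you sketch.

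The central misconception: after fixing the factor (you relabel it as $A^{(1)}$) at which some $j<r$ achieves $\mathrm{codim}(A^{(1)*}_j)+j\le r$, you aim to show that $T(A^{(1)*})$ itself has bounded geometric rank $r$. That is not the right target. If the minimum for factor~$1$ is attained at $j<r$ with $\mathrm{codim}(A^{(1)*}_j)=r-j>0$, nothing forces $A^{(1)*}_r$ to equal all of $A^{(1)*}$; there can be slices $x_1\notin A^{(1)*}_r$ with $\GR(T(x_1))>r$. The conclusion one can hope for is that \emph{some other} factor $i$ has $T(A^{(i)*})$ of bounded geometric rank $r$. Your argument never produces or even points at this other factor. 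Relatedly, your decomposition $T=X+Y$ with $Y$ of slice rank $1$ along the first factor is only available when $T$ is not primitive (this is exactly the dichotomy in Lemma~\ref{primitivelem} and Lemma~\ref{decomplem}); primitive tensors of each geometric rank exist (e.g.\ $M_{\langle 2\rangle}$ at $r=3$), so the ``peel off a rank-one layer'' step can fail at the first step. The later claim that $X_j$ lives over a first factor of dimension $r-j$ and that its slices are sums of at most $r-j$ \emph{rank-one} $(n-1)$-part tensors is also incorrect: bounding the first multilinear rank bounds the number of summands, but those summands are arbitrary $(n-1)$-part tensors, not rank-one ones, so this gives no control on $\GR(T(x_1))$. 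Finally, your ``cleaner route'' proposes induction on $r$ with base case Proposition~\ref{prop:npart3}; but in the paper Proposition~\ref{prop:npart3} is proved \emph{from} Proposition~\ref{prop:npart2}, so that base case is circular.

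The paper's proof instead inducts on $n$, the number of tensor factors, with base case $n=3$ handled by the known tripartite classifications for $r=1,2$. In the inductive step, by Proposition~\ref{altdef2} there is $k\le r$ with $\mathrm{codim}(A^{(1)*}_k)\le r-k$. If $k=r$ you are done with $i=1$. If $k<r$, the move is to consider the $(N-1)$-part slices $T(x_1)$ for $x_1\in A^{(1)*}_k$, which have $\GR\le k<r<N$, so the induction hypothesis applies and yields a factor $i>1$ with $T(x_1)(A^{(i)*})$ of bounded geometric rank $k$. One then reads the inclusion the other way: for every $x_i\in A^{(i)*}$, the set $\{x_1:\GR(T(x_i)(x_1))\le k\}$ contains $A^{(1)*}_k$, hence has codimension $\le r-k$; by Proposition~\ref{altdef2} applied to the $(N-1)$-part tensor $T(x_i)$, this gives $\GR(T(x_i))\le r$ for all $x_i$, i.e.\ $T(A^{(i)*})$ has bounded geometric rank $r$. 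The key features your proposal is missing are exactly this change of factor (from $1$ to $i>1$) and the induction on $n$ rather than $r$; and the mechanism is the alternative-definition bookkeeping in Proposition~\ref{altdef2}, not a primitive/compression decomposition.
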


\begin{proof}
$\Leftarrow$ direction is obvious by Proposition \ref{altdef2}. Prove $\Rightarrow$ by induction on $n$.

The base case is $n=3$, then $\mathrm{GR}(T)\leq 1\iff\mathrm{SR}(T)\leq 1 \iff$ $\exists i$, $T(A_i)$ has bounded geometric rank $1$; $\mathrm{GR}(T)\leq 2\iff$ $\exists i$, $T(A_i)$ has bounded rank $2$, and matrix rank coincides with geometric rank for 2-tensors.

Assume the proposition is true for all $n<N$ and $T\in A^{(1)}\otimes\cdots\otimes A^{(N)}$. By Proposition \ref{altdef2}, $\mathrm{GR}(T)\leq r$ if and only if $\exists 0\leq k\leq r$, such that $\mathrm{codim}(A^{(1)*}_k)\leq r-k$. If $k=r$, then $A^{(1)*}_k=A^{(1)*}$ and $T(A^{(1)*})$ has bounded geometric rank $r$. 

If $k<r$, for $x_1\in A^{(1)*}_k$ consider $T(x_1)$ is as an $(N-1)$-part tensor. By assumption $\mathrm{GR}(T(x_1))\leq k$ if and only if $\exists i>1$, $T(x_1)(A^{(i)*})$ has bounded geometric rank $k$. Therefore we have 
$\forall x_i\in A^{(i)*},\forall x_1\in A^{(1)*}_k,\mathrm{GR}(T(x_1)(x_i))\leq k$

$\Rightarrow \forall x_i\in A^{(i)*},\{x_1\in A^{(1)*}\mid \mathrm{GR}(T(x_i)(x_1))\leq k\}\supset A^{(i)*}_k$

$\Rightarrow \forall x_i\in A^{(i)*},\mathrm{codim}\{x_1\in A^{(1)*}\mid \mathrm{GR}(T(x_i)(x_1))\leq k\}\leq r-k$

$\Rightarrow \forall x_i\in A^{(i)*},\mathrm{GR}(T)\leq k$

$\Rightarrow T(A^{(i)*})$ has bounded rank $k$.
\end{proof}

$T$ is said to have \textbf{partition rank 1} if there exists a partition $[n]=I\sqcup J$ such that $T=T_1\otimes T_2$ for some nonzero $T_1\in \bigotimes_{i\in I}A^{(i)}$ and $T_2\in\bigotimes_{j\in J}A^{(j)}$. Regard $T$ as a multilinear function $T: A^{(1)*}\times\cdots\times A^{(n)*}\rightarrow \mathbb{C}$, then $T$ has partition rank 1 if and only if $T$ is a product of two non-constant multilinear functions. The \textbf{partition rank} of $T$ is the smallest integer $r$ such that $T$ can be written as a sum of $r$ partition rank 1 tensors, denoted as $\mathrm{PR}(T)$. 

Partition rank was introduced in \cite{partition} as a more general version of slice rank. By definition $\GR(T)\leq\mathrm{PR}(T)\leq \SR(T)$. And \cite{CM21} showed that the partition rank is at most $2^{n-1}$ times of the geometric rank for $n$-part tensors.

\begin{prop}\label{prop:npart3}
For $n\geq 3$, $T\in A^{(1)}\otimes\cdots\otimes A^{(n)}$ has geometric rank 1 if and only if it has partition rank 1.
\end{prop}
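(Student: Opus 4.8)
The plan is to establish the nontrivial direction directly from the vanishing locus $\hat{\Sigma}_T^{A^{(1)}\cdots A^{(n-1)}}$, with no induction on $n$. The easy direction is immediate: if $\mathrm{PR}(T)=1$ then $T\neq 0$, and since $\GR(T)\le\mathrm{PR}(T)=1$ while $\GR(T)\ge 1$ (a vanishing locus of codimension $0$ would be all of the ambient space and force $T=0$), we get $\GR(T)=1$.

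For the converse, suppose $\GR(T)=1$, so $T\neq 0$ and $\hat{\Sigma}_T^{A^{(1)}\cdots A^{(n-1)}}$ has codimension $1$ in the affine space $A^{(1)*}\times\cdots\times A^{(n-1)*}$. I would fix a basis $\{\gamma_k\}$ of $A^{(n)*}$ with dual basis $\{c_k\}$ of $A^{(n)}$, and set $\ell_k(x_1,\dots,x_{n-1}):=T(x_1,\dots,x_{n-1},\gamma_k)$; then $\hat{\Sigma}_T^{A^{(1)}\cdots A^{(n-1)}}=V(\ell_1,\dots,\ell_{m_n})$ and each $\ell_k$ is multihomogeneous of multidegree $(1,\dots,1)$ in the $n-1$ groups of coordinates. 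Since this locus has codimension $1$ it has an irreducible component of codimension $1$, which — the coordinate ring being a unique factorization domain — is a hypersurface $V(F)$ with $F$ irreducible; as $(F)$ is prime and contains $(\ell_1,\dots,\ell_{m_n})$, $F$ divides every $\ell_k$. A divisor of a nonzero multihomogeneous polynomial is again multihomogeneous, so $F$ has a multidegree $\mathbf{d}=(d_1,\dots,d_{n-1})$; comparing with the $\ell_k$ (not all zero, as $T\neq 0$) gives $0\le d_i\le 1$, and $F$ is nonconstant, so some $d_i=1$. Using that both $\GR$ and $\mathrm{PR}$ are unchanged by permuting the factors (Proposition \ref{altdef2}), I may reorder so that $\mathbf{d}=(1,\dots,1,0,\dots,0)$ with exactly $p$ ones, $1\le p\le n-1$.

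Then $F$ is multilinear in $x_1,\dots,x_p$ alone, hence a nonzero element $F\in A^{(1)}\otimes\cdots\otimes A^{(p)}$. Writing $\ell_k=F\cdot g_k$, the quotient $g_k$ is multihomogeneous of multidegree $(0,\dots,0,1,\dots,1)$, i.e. a multilinear form in $x_{p+1},\dots,x_{n-1}$ (a scalar if $p=n-1$). Therefore
\[
T(x_1,\dots,x_n)=\sum_k\langle x_n,c_k\rangle\,\ell_k(x_1,\dots,x_{n-1})=F(x_1,\dots,x_p)\cdot\Big(\sum_k\langle x_n,c_k\rangle\,g_k(x_{p+1},\dots,x_{n-1})\Big),
\]
and the right-hand factor is multilinear in $x_{p+1},\dots,x_n$, hence a nonzero element $\tilde G\in A^{(p+1)}\otimes\cdots\otimes A^{(n)}$. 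Thus $T=F\otimes\tilde G$ for the partition $[n]=\{1,\dots,p\}\sqcup\{p+1,\dots,n\}$ with both blocks nonempty, so $\mathrm{PR}(T)=1$.

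The steps I expect to require the most care are the two structural facts used in the middle paragraph: that a codimension-$1$ closed subset of affine space contains a hypersurface component (this is what forces a common irreducible factor of the $\ell_k$), and that a factor of multihomogeneous polynomials is again multihomogeneous — the latter obtained by refining the $\mathbb{Z}^{n-1}$-grading to a total order and comparing the extreme multihomogeneous components of a product in the polynomial ring, which is a domain. Once those are in place, the remainder is bookkeeping with multidegrees together with the observation that $T=F\otimes\tilde G$ with both tensor factors nonzero is exactly partition rank $1$.
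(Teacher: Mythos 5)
Your proof is correct, and it takes a genuinely different route from the paper's. The paper proves the nontrivial direction by induction on $n$: it invokes Proposition \ref{prop:npart2} to reduce to a space of $(n-1)$-part tensors of bounded geometric rank $1$, writes each $T(z_j)$ as a product $f_jg_j$ by the inductive hypothesis, and then extracts a common factor of the $f_j$ from the codimension-$1$ condition; the base case $n=3$ is imported from \cite{GL20}. Your argument instead works directly with $\hat{\Sigma}_T^{A^{(1)}\cdots A^{(n-1)}}=V(\ell_1,\dots,\ell_{m_n})$ in the ambient affine space. The two structural facts you flag are both standard and correctly deployed: a codimension-$1$ closed subset of an irreducible affine space has a codimension-$1$ component, which in a polynomial ring (a UFD) is $V(F)$ for an irreducible $F$ with $(F)\supset(\ell_1,\dots,\ell_{m_n})$; and a divisor of a nonzero multihomogeneous polynomial is again multihomogeneous, by comparing extremal graded pieces of a product in a domain. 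From there the multidegree bookkeeping forces $\mathbf d\in\{0,1\}^{n-1}$ with at least one coordinate equal to $1$, and the factorization $T=F\otimes\tilde G$ lands in a genuine two-block partition of $[n]$, with both factors nonzero and nonconstant. What your approach buys is self-containment: it needs neither Proposition \ref{prop:npart2} nor the $n=3$ base case from \cite{GL20}, only the definitional inequality $\GR\le\mathrm{PR}$ and the UFD argument. The paper's inductive route is tied to the surrounding development (it showcases Proposition \ref{prop:npart2} as a tool), but your direct argument is cleaner on its own terms and makes the role of unique factorization explicit, which the paper's phrase ``the $f_j$'s have a common factor'' leaves implicit. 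One tiny caveat, which does not affect correctness: when some $\ell_k=0$ the quotient $g_k$ is the zero form rather than one of multidegree $(0,\dots,0,1,\dots,1)$; it still lies in the right space and the final product is unaffected.
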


\begin{proof}
By definition $\mathrm{PR}(T)=1$ implies $\GR(T)=1$. We prove the other direction by induction on $n$. For $n=3$, by Remark 2.6 of \cite{GL20} $\GR(T)=1$ if and only if $\SR(T)=1$,  and slice rank agree with partition rank for tripartite tensors.

Assume the statement is true for $n<N$ and $T\in A^{(1)}\otimes\cdots\otimes A^{(N)}$. By Proposition \ref{prop:npart2} there exists $i$ such that $T(A^{(i)})$ consists of $(N-1)$-part tensors with geometric rank at most $1$. Without loss of generality assume $i=N$.

Let $\{z_j\}_{j=1}^{m_N}$ be a basis of $A^{(N)*}$. By assumption $\mathrm{PR}(T(z_j))\leq 1$ for all $j$, so we can write $T(z_j)=:f_jg_j$ for some multilinear function $f_j,g_j$. By the definition of geometric rank, $\{T(z_j)=0,\forall j\}\subset A^{(1)}\times\cdots\times A^{(N-1)}$ has codimension 1. By possibly swapping $f_j$ and $g_j$, assume $\{f_j=0,\forall j\}$ has codimension 1. Thus $f_j$'s have a common factor of positive degree, denoted as $f$. Then we can write $f_j=:fh_j$ for some $h_j$, so $T(z_j)=fh_jg_j$. 

Say $f$ is a multilinear function defined on $\Pi_{j\in I}A^{(j)}$ for some $I\subset [N-1]$, then $h_jg_j$ is defined on $\Pi_{j\in [N-1]\backslash I}A^{(j)}$. Define $g:(\Pi_{j\in [N-1]\backslash I}A^{(j)})\times A^{(N)}\rightarrow \mathbb{C}$ by $g(x,z_j):=(h_jg_j)(x)$. Therefore $T=fg$ and has partition rank 1.
\end{proof}


\section{Proof of Proposition \ref{E2codim1prop}}\label{sectionofproof}
Before proving the proposition, we need the following lemma.

\begin{lem}\label{appendixlem}
Let $E\subset A\ot B:=\BC^2\ot \BC^2$ be a matrix of linear forms in variables $x_1,\cdots,x_{\ccc}$. Define
$$X_1:=\begin{pmatrix}
    x_1 & x_2\\
    x_2 & x_3
    \end{pmatrix},
X_2:=\begin{pmatrix}
    x_1 & x_2\\
    x_3 & x_4
    \end{pmatrix}$$
\begin{enumerate}
    \item If $\mathrm{det}E=\mathrm{det}X_1$, then $E=X_1$ up to changes of bases in $A$ and $B$.
    \item If $\mathrm{det}E=\mathrm{det}X_2$, then either $E=X_2$ or $E=X_2^t$ up to changes of bases in $A$ and $B$.
\end{enumerate}
\end{lem}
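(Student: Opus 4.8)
The plan is to recover $E$ from the locus where it drops rank — a smooth conic in case~1, a smooth quadric surface in case~2 — after first pinning down which variables occur in $E$. Write $E=\begin{pmatrix}a&b\\c&d\end{pmatrix}$ with $a,b,c,d$ linear forms, so $\det E=ad-bc$. I would begin by showing that the entries of $E$ span exactly the support of $\det E$. On one hand $\det E\in\mathrm{Sym}^2\langle a,b,c,d\rangle$, so $\mathrm{rank}(\det E)=\dim(\text{support})\le\dim\langle a,b,c,d\rangle$; on the other hand, if $a,b,c,d$ are linearly independent then $\det E$ is the pullback of the rank-$4$ form $\xi_1\xi_4-\xi_2\xi_3$ along the surjection $x\mapsto(a,b,c,d)(x)$, hence has rank $4$. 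Since $\det X_1=x_1x_3-x_2^2$ has rank $3$, in case~1 this forces $\dim\langle a,b,c,d\rangle=3$, and as the support of a rank-$3$ form is $3$-dimensional, $\langle a,b,c,d\rangle$ equals the support $\langle x_1,x_2,x_3\rangle$ of $\det X_1$. Since $\det X_2=x_1x_4-x_2x_3$ has rank $4$, in case~2 we get $\langle a,b,c,d\rangle=\langle x_1,x_2,x_3,x_4\rangle$. So $E$ is a matrix of linear forms in $3$ (resp.\ $4$) variables, and $E(p)\ne 0$ for every point $[p]$ of $\mathbb{P}^2$ (resp.\ $\mathbb{P}^3$).

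Next, set $C=\{x_1x_3=x_2^2\}\subset\mathbb{P}^2$ (a smooth conic, case~1) and $Q=\{x_1x_4=x_2x_3\}\subset\mathbb{P}^3$ (a smooth quadric surface $\cong\mathbb{P}^1\times\mathbb{P}^1$, case~2). On this locus $\det E$ vanishes, so there $E$ has rank $\le 1$, and by the previous step rank exactly $1$. Pulling $E$ back along the Veronese parametrization $\nu\colon\mathbb{P}^1\xrightarrow{\sim}C$ (resp.\ the Segre parametrization $\mathbb{P}^1\times\mathbb{P}^1\xrightarrow{\sim}Q$), $E\circ\nu$ becomes a $2\times2$ matrix of binary quadratic forms (resp.\ of bidegree-$(1,1)$ forms) of rank $\le 1$ at every point. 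Over the function field such a matrix factors as $\mathbf{u}\mathbf{v}^{t}$; clearing denominators and dividing $\mathbf{u}$ by the $\gcd$ of its entries makes $\mathbf{u},\mathbf{v}$ vectors of (bi)homogeneous forms with $\mathbf{u}$ of coprime entries. No entry of $\mathbf{u}$ or $\mathbf{v}$ can vanish (else a full row or column of $E$ vanishes on the irreducible quadric, forcing $\det E=0$), so $\mathbf{u},\mathbf{v}$ are homogeneous of degrees $d_1,d_2\ge 0$ with $d_1+d_2=2$ in case~1 (resp.\ of bidegrees summing to $(1,1)$ in case~2). Moreover $d_i=0$ (resp.\ a bidegree $(0,0)$) is impossible: it would make the column- or row-space morphism of $E$ on the quadric constant, and after a change of basis in $A$ or $B$ a row or column of $E$ would vanish on the quadric, again forcing $\det E=0$. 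Hence $d_1=d_2=1$ in case~1, and the two bidegrees are $(1,0)$ and $(0,1)$ in some order in case~2.

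In the surviving case $\mathbf{u}$ and $\mathbf{v}$ are linear (resp.\ of bidegree $(1,0)$ and $(0,1)$) with coprime entries, hence define isomorphisms of the relevant $\mathbb{P}^1$'s onto $\mathbb{P}(A)$ and $\mathbb{P}(B)$; composing with automorphisms of $\mathbb{P}(A)$ and $\mathbb{P}(B)$, i.e.\ performing row and column operations on $E$, normalizes them to the identity parametrizations. A direct check then gives $E\circ\nu=X_1\circ\nu$ in case~1, and $E|_Q=X_2|_Q$ or $X_2^{t}|_Q$ in case~2 according to which ruling the column-space morphism is constant along. Since a linear form vanishing on the nondegenerate irreducible hypersurface $C\subset\mathbb{P}^2$ (resp.\ $Q\subset\mathbb{P}^3$) is zero, we conclude $E=X_1$ (resp.\ $E=X_2$ or $X_2^{t}$) up to changes of bases in $A$ and $B$.

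I expect the main obstacle to be the middle step: making the factorization $E\circ\nu=\mathbf{u}\mathbf{v}^{t}$ precise over the graded coordinate ring, matching the (bi)degrees of $\mathbf{u},\mathbf{v}$ with the degrees of the column- and row-space morphisms, and tracking how row and column operations act on these morphisms so that the normalization to $X_1$, $X_2$, $X_2^{t}$ is legitimate. Once the rank-$1$ locus is described, the rest is formal.
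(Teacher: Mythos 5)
Your proof is correct and takes a genuinely different route from the one in the paper's source (which is commented out but present). The paper normalizes $E$ to a symmetric (resp.\ general) $2\times 2$ form and then invokes the exceptional Lie-group coincidences: the $2$-to-$1$ covers $\mathrm{SL}_2\to\mathrm{SO}(3,\det X_1)$ acting by $U\cdot E=UEU^t$ and $\mathrm{SL}_2\times\mathrm{SL}_2\to\mathrm{SO}(4,\det X_2)$ acting by $(U,V)\cdot E=UEV^{-1}$, extended by $\mathbb{Z}/2$ (conjugation by $J=\mathrm{diag}(1,-1)$, resp.\ transposition) to hit the full orthogonal groups; the element of $\mathrm{O}(3)$ or $\mathrm{O}(4)$ carrying $\det E$ to $\det X_i$ then lifts to a change of bases in $A\otimes B$. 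Your argument instead reconstructs $E$ from the geometry of its degeneracy locus: first pinning down the span of the entries via rank considerations (a clean observation — the surjectivity trick for computing the rank of a pulled-back quadratic form is exactly right), then treating $E$ as a morphism of rank exactly $1$ along the Veronese conic (resp.\ Segre quadric) and using the splitting/degree bookkeeping for line subbundles of $\mathcal{O}(2)^2$ on $\mathbb{P}^1$ (resp.\ $\mathcal{O}(1,1)^2$ on $\mathbb{P}^1\times\mathbb{P}^1$) to force the column- and row-space morphisms to be isomorphisms onto $\mathbb{P}(A)$ and $\mathbb{P}(B^*)$; nondegeneracy of the conic/quadric then upgrades equality on the degeneracy locus to equality of linear forms. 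The two proofs buy different things: the paper's is shorter once one accepts the classical double-cover facts, and makes the $\mathrm{GL}(A)\times\mathrm{GL}(B)$-orbit transparent; yours is more self-contained and geometric, it explains \emph{why} the answer is $X_1$ vs.\ $X_2,X_2^t$ (the single ruling of a conic vs.\ the two rulings of a quadric surface, with transposition swapping them), and it generalizes more readily to larger determinantal situations. The remaining technical step you flagged — passing from a rank-$\le 1$ matrix of forms over the function field to a polynomial factorization $\mathbf{u}\mathbf{v}^t$ with the stated degree bounds — is indeed the only piece that needs to be written carefully, but it is standard (image of $\mathcal{O}^2\to\mathcal{O}(2)^2$, resp.\ $\mathcal{O}(1,1)^2$, is a line subbundle when the rank is everywhere $1$), so there is no gap in substance.
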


\begin{proof}[Proof of Proposition \ref{E2codim1prop}]
Say $\Delta_3=x_1S$, then the upper left $3\times 3$ submatrix must be of the form $x_1Z+U$ where $Z$ is a $3\times3$ matrix of complex numbers and $U$ has bounded rank rank 2. Therefore up to changes of bases, $U$ is either compression or skew-symmetric. Since $\Delta_3\neq0$, taking transpose if necessary, we can write the upper left $3\times 3$ submatrix as one of the following forms:

$$\mathbf{(i)}\begin{pmatrix}
y^1_1 & y^1_2 & y^1_3\\
y^2_1 & y^2_2 & y^2_3\\
0 & 0 & x_1
\end{pmatrix},
\mathbf{(ii)}\begin{pmatrix}
x_1 & 0 & y^1_3\\
0 & x_1 & y^2_3\\
y^3_1 & y^3_2 & y^3_3
\end{pmatrix},
\mathbf{(iii)}\begin{pmatrix}
x_1 & y^1_2 & y^1_3\\
-y^1_2 & x_1 & y^2_3\\
-y^1_3 & -y^2_2 & x_1
\end{pmatrix}$$

For the rest of the proof, we will discuss each of the above cases.

~\\\textbf{Case (i).} $S=\Delta_{2}=y^1_1y^2_2-y^1_2y^2_1$ is an irreducible quadratic polynomial, hence has Waring rank 3 or 4. Changing basis in $E$ we can write $S=x_1x_3-(x_2)^2$ or $x_1x_4-x_2x_3$ depending on rank$(S)$. By Lemma \ref{appendixlem} we can put the top left $2\times2$ block as the form $X_1$ or $X_2$.

$S$ divides $\Delta^{134}_{123}=x_1\Delta^{14}_{12}$ and $\Delta^{234}_{123}=x_1\Delta^{24}_{12}$, therefore $(y^4_1,y^4_2)\in\mathrm{span}\{(y^1_1,y^1_2),(y^2_1,y^2_2)\}$. Adding multiples of the first and the second row to the $4$-th, we can set $y^4_1$ and $y^4_2$ to zeros.

If $\Delta^{34}_{34}=0$, the right bottom $2\times 2$ submatrix has bounded rank 1, then $E$ has bounded rank 3.

Assume $\Delta^{34}_{34}\neq0$. Since $S$ divides $\Delta^{234}_{234}=y^2_2\Delta^{34}_{34}\neq 0$, $\Delta^{34}_{34}$ is a non-zero multiple of $S$, hence can be normalized to $S$. Apply Lemma \ref{appendixlem} again, $E$ has one of the following forms:
$$\begin{pmatrix}
x_1 & x_2 & y^1_3 & y^1_4\\
x_2 & x_3 & y^2_3 & y^2_4\\
0 & 0 & x_1 & x_2\\
0 & 0 & x_2 & x_3
\end{pmatrix}\text{ if rank}(S)=3,\text { or }
\begin{pmatrix}
x_1 & x_2 & y^1_3 & y^1_4\\
x_3 & x_4 & y^2_3 & y^2_4\\
0 & 0 & x_1 & y^3_4\\
0 & 0 & y^4_3 & x_4
\end{pmatrix}\text{ if rank}(S)=4$$
where $(y^3_4,y^4_3)=(x_2,x_3)$ or $(x_3,x_2)$. 

We consider separately the two subcases \textbf{(i.1)} rank(S) = 3 and \textbf{(i.2)} rank(S) = 4. And we further divide subcase \textbf{(i.2)} into two situations: \textbf{(i.2.1)} $(y^3_4,y^4_3)=(x_2,x_3)$, and \textbf{(i.2.2)} $(y^3_4,y^4_3)=(x_3,x_2)$.

\textbf{Subcase (i.1).} Assume rank$(S)=3$. Write $y^1_4=l_2+l'_2$, $y^2_3=l_3+l'_3$, $y^2_4=l_4+l'_4$, and $\Delta^{123}_{234}=(l+l')S$ where $l,l_i\in\mathrm{span}\{x_1,x_2,x_3\}$ and $l',l'_i\in\mathrm{span}\{x_4,\cdots,x_m\}$. Then  
$$l'S=l'(x_1x_3-(x_2)^2)=(x_3l'_2-x_2l'_4)x_1+(x_2l'_3-x_3l'_1)x_2.$$

Comparing terms that are multiples of $(x_1)^2$, we see $l'=0$, which forces all $l'_i=0$, so $y^i_j\in\mathrm{span}\{x_1,x_2,x_3\}$. Adding multiples of the first two rows and columns to the last two rows and columns, we can put $y^1_4=y^2_3=0$. 

Write $l=a_1x_1+a_2x_2+a_3x_3$. Then
$$(a_1x_1+a_2x_2+a_3x_3)(x_1x_3-(x_2)^2)=lS=\Delta^{123}_{234}=-x_2x_3y^1_3-x_1x_2y^2_4.$$
Comparing the terms of multiples of $(x_1)^2x_3, x_1(x_3)^2$ and $(x_2)^3$, we see all $a_i=0$ so $l=0$. Comparing the coefficients of the rest cubic monomials, we obtain $a_1+b_3=0$ and $a_2=a_3=b_1=b_2=0$. If $a_1=0$, $E$ has the form diag$(X_1,X_1)$ where $X_1=\begin{pmatrix}
x_1 & x_2\\
x_2 & x_3
\end{pmatrix}$.
If $a_1\neq 0$, multiply $a_1$ to the first two column and the last two rows, subtract the 1st row from the 3rd, and add the 4th row to the 2nd, then $E$ again has the form diag$(X_1,X_1)$.

\textbf{Subcase (i.2).} Assume rank$(S)=4$.

\textbf{(i.2.1).} If $(y^3_4,y^4_3)=(x_2,x_3)$: using the notations in (i.1), write $y^i_j=l_k+l'_k$ and $\Delta^{123}_{234}=(l+l')S$. Then $l'S=x_2(l'_3x_2-l'_4x_1)-x_4(l'_1x_2-l'_2x_1)$. Comparing terms we see all $l'_i=0$. Then by adding multiples of the first two columns to the third and fourth, then adding multiples of the first two rows to the first and second, we can make $y^1_3\in\mathrm{span}\{x_2,x_4\},y^2_3\in\mathrm{span}\{x_2\},y^1_3\in\mathrm{span}\{x_1,x_2,x_4\}$.

Since $\Delta^{123}_{234}=lS=x_2(y^2_3x_2-y^2_4x_1)-x_4(y^1_3x_2-y^1_4x_1)$, writing $y^i_j$ into linear combinations of $x_k$'s, we see that there is no $x_2x_3$. Thus $l=0$, then comparing terms of the above equation, we have $y^1_3=y^2_3=0$. By $\Delta^{123}_{134}$ and $\Delta^{123}_{234}$, $(y^1_4,y^2_4)^t\in\mathrm{span}\{(x_1,x_3)^t,(x_2,x_4)^t\}$, so we can set $(y^1_4,y^2_4)^t=0$ by adding multiples of the first two columns to the fourth.

Therefore, by changing bases $E$ has the form $\mathrm{diag}(X_2,X_2)$ where 
$X_2=\begin{pmatrix}
x_1 & x_2\\
x_3 & x_4
\end{pmatrix}$.

\textbf{(i.2.2).} If $(y^3_4,y^4_3)=(x_3,x_2)$: using the notations in (i.1), write $y^i_j=l_k+l'_k$ and $\Delta^{123}_{234}=(l+l')S$. Then $l'S=x_2(l'_3x_3-l'_4x_1)-x_4(l'_1x_3-l'_2x_1)$. Comparing terms we see $l'=-l'_3=l'_2$ and $l'_1=l'_4=0$. Then by adding multiples of the first two columns to the third and fourth, then adding multiples of the first two rows to the first and second, we can write $y^1_4=l',y^2_3=-l',y^1_3=\sum_{i=1}^4 a_ix_i,y^2_4=\sum_{i=1}^4 d_ix_i$.

$S$ divides other $3\times 3$ minors, which implies $y^1_3=y^2_4=0$. Therefore the only nonzero entries of $E$ are in the upper left $4\times4$ block of the form:
$$\begin{pmatrix}
x_1 & x_3 & 0 & l'\\
x_2 & x_4 & -l' & 0\\
0 & 0 & x_1 & x_2\\
0 & 0 & x_3 & x_4
\end{pmatrix}.$$
Swapping the first two rows and the last two rows, then multiply -1 to the first 2 rows. $E$ becomes skew-symmetric.

~\\\textbf{Case (ii).} Here $S=x_1y^3_3-y^2_3y^3_2-y^1_3y^3_1$. Modify $y^1_j,y^2_j,y^i_1,y^i_2$,
$3\leq i,j \leq 4$
such that their expressions (as linear forms in $x_i$'s) do not contain $x_1$.

If $y^3_3=0$ and $\exists i,j>2$, such that $y^i_j\neq 0$, we may change bases such that $y^3_3\neq 0$, so we have two cases $y^3_3\neq 0$ or
$y^i_j=0$ for all $i,j>2$.

If $y^3_3\neq 0$, then consider $\Delta^{12i}_{12j}=x_1(x_1y^i_j-y^i_1y^1_j-y^i_2y^2_j)$. We obtain
$y^i_j=c^i_j y^3_3$ for all $i,j>2$ and constants $c^i_j$. Changing bases again, we may set $y^3_4=y^4_3=0$ and $y^4_4=y^3_3$ or 0. There are 3 subcases: \textbf{(ii.1)} $y^4_4=y^3_3\neq0$, \textbf{(ii.2)} $y^4_4=0,y^3_3\neq0$, and \textbf{(ii.3)} $y^3_3=y^4_4=0$.

\textbf{Subcase (ii.1).} Assume $y^4_4=y^3_3\neq0$. Then $\Delta^{\rho34}_{\rho34}=y^3_3(x_1y^3_3-y^3_\rho y^\rho_3-y^4_\rho y^\rho_4),\forall \rho=1,2$. Together with $\Delta^{12i}_{12i}$'s we get
$$y^3_1 y^1_3+y^4_1 y^1_4=y^3_2 y^2_3+y^4_2 y^2_4=y^3_1y^1_3+y^3_2y^2_3=y^4_1y^1_4+y^4_2y^2_4$$
Hence $y^3_1 y^1_3=y^4_2y^2_4$, $y^4_1 y^1_4=y^3_2y^2_3$.

$\Delta^{\rho34}_{\sigma34}=y^3_3(-y^3_\sigma y^\rho_3-y^4_\sigma y^\rho_4)=0$ for $(\rho,\sigma)=(0,1)$ or $(1,0)$, and $\Delta^{12i}_{12j}=0$ for $i\neq j$.  We get
$$y^3_1y^1_4+y^3_2y^2_4=y^4_1y^1_3+y^4_2y^2_3=y^3_1y^2_3+y^4_1y^2_4=y^3_2y^1_3+y^4_2y^1_4=0$$

In other words, denoting $Q:=y^3_1 y^1_3+y^4_1 y^1_4$, the following equations hold:
$$\begin{pmatrix}
y^3_1 & y^3_2\\
y^4_1 & y^4_2
\end{pmatrix}
\begin{pmatrix}
y^1_3 & y^1_4\\
y^2_3 & y^2_4
\end{pmatrix}
=\begin{pmatrix}
y^1_3 & y^1_4\\
y^2_3 & y^2_4
\end{pmatrix}
\begin{pmatrix}
y^3_1 & y^3_2\\
y^4_1 & y^4_2
\end{pmatrix}
=\begin{pmatrix}
Q & 0\\
0 & Q
\end{pmatrix}$$

Then by changing bases $E$ equals to the matrix whose upper left $4\times 4$ block is one of the following, and all other entries are zeros:
$$\begin{pmatrix}
x_1& 0& a& d\\
0& x_1& c& b\\
b& -d& y^3_3& 0\\
-c& a& 0& y^3_3
\end{pmatrix}$$
for some linear forms or zeros $a,b,c,d,y^3_3$.

\textbf{Subcase (ii.2).} Assume $y^4_4=0,y^3_3\neq0$. Since $\Delta_4\neq 0$, there exist $\rho,\sigma=1,2$, such that $y^4_{\sigma}$ and $y^{\rho}_4 \neq 0$. Then $\Delta^{124}_{\sigma 34}$ and $\Delta^{\rho 34}_{124}$ implies $\Delta^{12}_{34}=\Delta^{34}_{12}=0$. Then change bases in the first two rows and columns, we get:
$$\begin{pmatrix}
c_1x_1 & c_2 x_1 & y^1_3 & y^1_4 \\
c_3 x_1 & c_4 x_1 & 0 & 0 \\
y^3_1 & 0 & y^3_3 & 0 \\
y^4_1 & 0 & 0 &0 \\
\end{pmatrix}$$
for some constants $c_i$. $\Delta^{123}_{234}=-c_4x_1y^1_4y^3_3$ implies $c_4=0$. Then $\Delta^{123}_{123}=-c_2c_3(x_1)^2y^3_3$ implies either $c_2$ or $c_3=0$, contradicting the hypothesis $\Delta_3\neq 0$.

\textbf{Subcase (ii.3).} Assume $y^3_3=y^4_4=0$. As $S=y^3_1y^1_3+y^3_2y^2_3$ is irreducible, $y^3_1,y^3_2$ are linearly independent, and so are $y^1_3,y^2_3$. Choose bases such that $y^1_1$ and $y^2_2$ are not necessary $x_1$, and $y^3_1=x_1,y^3_2=x_2$. Since rank$(S)\geq 3$, at least one of $y^1_3,y^2_3$ is linearly independent with $x_1,x_2$. Without loss of generality assume $x_1,x_2,y^1_3$ are linearly independent, then choose bases such that $y^1_3=x_3$:
$$\begin{pmatrix}
y^1_1 & 0 & x_3 & y^1_4  \\
0 & y^1_1 & y^2_3 & y^2_4\\
x_1 & x_2 & 0   & 0 \\
y^4_1 & y^4_2 & 0   & 0\\
\end{pmatrix}.$$

If $\Delta^{12}_{34}=0$, 
$\begin{pmatrix}
 x_3 & y^1_4 \\
 y^2_3 & y^2_4\\
\end{pmatrix}$ has bounded rank 1 so we can set either the fourth column to zero (then $E\subset\mathbb{C}^4\otimes\mathbb{C}^3$), or $y^2_3=y^2_4=0$ (then $\Delta^{123}_{123}$ is a product of linear forms, contradicting to irreducibility of $S$). 

If $\Delta^{12}_{34}\neq0$, by linear independence of $y^1_3=x_3$ and $y^2_3$, and $\Delta^{12}_{34}$ is a nonzero multiple of $S=x_1y^1_3+x_2y^2_3$, we can normalize the fourth column such that $(y^1_4,y^2_4)^t=(x_2,-x_1)^t$ and $y^{\rho}_j=0,\forall j>4, \rho=1,2$. By the same argument, we can set $(y^4_1,y^4_2)=(y^2_3,-x_3)^t$ and $y^i_{\sigma}=0,\forall i>4, \sigma=1,2$.

Then $E$ has the form:
$$\begin{pmatrix}
y^1_1 & 0 & x_3 & x_2\\
0 & y^1_1 & y^2_3 & -x_1\\
x_1 & x_2 & 0   & 0 \\
y^2_3 & -x_3 & 0  & 0\\
\end{pmatrix},$$
which is skew-symmetric after permuting rows and columns.

~\\\textbf{Case (iii).} Here $S=x_1^2+(y^1_2)^2+(y^1_3)^2+(y^2_3)^2$ is irreducible, so rank$(S)>2$. We consider two subcases by whether $x_1,y^1_2,y^1_3,y^2_3$ are linearly independent.

\textbf{Subcase (iii.1).} Assume $x_1,y^1_2,y^1_3,y^2_3$ are linearly independent. We can choose basis of $E$ such that $y^1_2=x_2,y^1_3=x_3,y^2_3=x_4$. In order that $S$ divides all $3\times 3$ minors, $(y^1_4,y^2_4,y^3_4)^t$ must be a linear combination of $(x_1,-x_2,-x_3)^t$,  $(x_2,x_1,-x_4)^t$, $(x_3,x_4,x_1)^t$, and $(x_4,-x_3,x_2)^t$. Changing the basis we can put $(y^1_4,y^2_4,y^3_4)^t=(x_4,-x_3,x_2)^t$. By the same argument, $(y^4_1,y^4_2,y^4_3)=(-x_4,+x_3,-x_2)$. Consider the $3\times3$ minors involving $y^4_4$, we see $y^4_4=-x_1$.

Hence $E$ has the form:
\begin{equation*}
    \begin{pmatrix}
    x_1 & x_2 & x_3 & x_4 \\
    -x_2 & x_1 & x_4 & -x_3\\
    -x_3 & -x_4 & x_1 & x_2\\
    -x_4 & x_3 & -x_2 & x_1\\
    \end{pmatrix}.
\end{equation*}
Then $E$ is the the complex quaternion algebra $\mathrm{span}_{\mathbb{C}}\{\mathds{1},I,J,K\}/(I^2+\mathds{1},J^2+\mathds{1},K^2+\mathds{1},IJK+\mathds{1})$ and the associated tensor of $E$ is the structure tensor of the complex quaternion. Since the complex quaternion algebra is isomorphic to the matrix algebra $\mathrm{Mat}_{2\times2}$, their structure tensors equal up to changes of bases. So $E$ equals to $M_{\langle2\rangle}$ up to changes of bases in $E,A$ and $B$.

\textbf{Subcase (iii.2).} Assume $x_1,y^1_2,y^1_3,y^2_3$ are linearly dependent. The irreducibility of $S$ implies three of them are linearly independent. $x_1\neq0$ since $\Delta_3\neq 0$. By changing bases assume $y^1_2=x_2,y^1_3=x_3,y^2_3=a_1x_1+a_2x_2+a_3x_3$ for $a_i\in\mathbb{C}$. Then $S=x_1^2+x_2^2+x_3^2+(a_1x_1+a_2x_2+a_3x_3)^2$. If $\mathrm{dim}\langle x_i,y^i_4\mid i=1,2,3\rangle\geq 5$, the submatrix consisting of the first 3 rows is a subspace of a 1-generic space of codimension $\leq 2$, then Theorem \ref{1genthm} implies contradiction. So $\mathrm{dim}\langle x_i,y^i_4\mid i=1,2,3\rangle\leq 4$.

Adding first 3 columns to the 4th, we can set $y^1_4=0$ or $x_4$. Write $y^2_4=\sum_i b_ix_i$ and $y^3_4=\sum_i c_ix_i$, $\Delta^{123}_{124}=LS$,$\Delta^{123}_{134}=MS$ and $\Delta^{123}_{224}=NS$ for some linear forms $L=\sum_i l_ix_i,M,N$.

If $y^1_4=x_4$: 
\begin{align*}
    \Delta^{123}_{124}=&((a_1b_4+c_4)x_1^2 + (a_2+c_4)x_2^2 +(1+a_3b_4)x_1x_3 + (a_2b_4+a_1)x_1x_2 + (a_3-b_4)x_2x_3)x_4 \\
    &+(c_1+a_1b_1)x_1^3+ c_2x_2^3 + (c_3-b_2)x_2^2x_3 - b_3x_2x_3^2 + (c_1+a_2b_2)x_1x_2^2\\
    &+ (c_2+a_1b_2+a_2b_1)x_1^2x_2 + (c_3+a_1b_3+a_3b_1)x_1^2x_3 + a_3b_3x_1x_3^2 +(a_2b_3+a_3b_2-b_1)x_1x_2x_3.
\end{align*}

Note that there is no $x_3^2x_4$ in $\Delta^{123}_{124}$. This implies either $a_3^2+1=0$ or $l_4=0$. If $l_4=0$, those terms divisible by $x_4$ have the sum zero: 
$$(a_1b_4+c_4)x_1^2 + (a_2+c_4)x2^2 +(1+a_3b_4)x1x3 + (a_2b_4+a_1)x_1x_2 + (a_3-b_4)x_2x_3=0$$
which implies $a_3^2+1=0$. Hence $a_3^2+1=0$ no matter if $l_4=0$.

There is no $x_2^2x_4$ in $\Delta^{123}_{134}$, thus by the same argument, we must have $a_2^2+1=0$. 

Compare the coefficients of $x_2^3$ in equality $\Delta^{123}_{124}=LS$ and $x_3^3$ in $\Delta^{123}_{134}=MS$, we get 
$$c_2=l_2(1+a_2^2)=0\text{ and }-b_3=m_3(1+a_3^2)=0.$$ 
Compare the coefficients of $x_2^2x_4$ in $\Delta^{123}_{124}=LS$ and $x_3^2x_4$ in $\Delta^{123}_{134}=MS$, we get $$(a_2+c_4)=l_4(1+a_2^2)=0 \text{ and } (a_3-b_4)=m_4(1+a_3^2)=0.$$
Compare the coefficients of $x_2x_3x_4$ in $\Delta^{123}_{124}=LS$ and $\Delta^{123}_{134}=MS$, we get
$$2a_2a_3l_4=(a_3-b_4)=0\text{ and }2a_2a_3m_4=a_2+c_4=0.$$
Therefore $l_4=m_4=0$. Then the coefficients of every monomial divisible by $x_4$ in $\Delta^{123}_{124}$ and $\Delta^{123}_{134}$ equals zero. We get $a_1=a_2/a_3$ from $\Delta^{123}_{124}$ but $a_1=a_2a_3$ contradicting $a_3^2=-1$.

If $y^1_4=0$: since there is no $x_3^2x_4$ in $\Delta^{123}_{124}$, either $a_3^2+1=0$ or $l_4=0$. 

If $l_4=0$, then the coefficients of every monomial divisible by $x_4$ in $\Delta^{123}_{124}$ equal zero, which implies $b_4=c_4=0$. Therefore there is no $x_4$ appearing in the first 3 rows, and by the same argument $x_4$ does not appear in the first 3 columns. If $\exists i,j>3$, such that $y^i_j\notin\mathrm{span}\{x_1,x_2,x_3\}$, then we can change basis in $E$ to set $y^i_j=x_4$. Write $\Delta^{12i}_{12j}=x_4(x_1^2+x_2^2)+p(x_1,x_2,x_3)$ for some polynomial $p$. $S=S(x_1,x_2,x_3)$ dividing $\Delta^{12i}_{12j}\neq 0$ implies that $S$ divides $x_1^2+x_2^2$, contradicting to the irreducibility of quadratic polynomial $S$. If there is no such $y^i_j$, then $\mathrm{dim}(E)=3$.

Therefore $a_3^2+1=0$. And by the same argument, since there is no $x_2^2x_4$ in $\Delta^{123}_{134}$, $a_2^2+1=0$. Compare the coefficients of $x_2^2x_4$ in $\Delta^{123}_{124}=LS$ and $x_3^2x_4$ in $\Delta^{123}_{134}=MS$, we get $c_4=b_4=0$.

Then by the same argument as in the case $l_4=0$ we obtain $\mathrm{dim}(E)=3$.
\end{proof}

\bibliographystyle{amsplain}
\bibliography{geomrk.bib}
\end{document}